\newtheorem{theorem}{Theorem}[subsection]
\theoremstyle{plain}
\newtheorem{cor}[theorem]{Corollary}
\newtheorem{prop}[theorem]{Proposition}
\newtheorem{lemma}[theorem]{Lemma}
\theoremstyle{definition}
\newtheorem{definition}[theorem]{Definition}
\newtheorem{example}[theorem]{Example}
\newtheorem*{notation}{Notation}
\newtheorem{remark}[theorem]{Remark}
\newtheorem{obs}[theorem]{Observation}
\theoremstyle{remark}
\newtheorem{case}{Case}
\numberwithin{equation}{section}
\DeclareMathOperator{\q}{\mathfrak{q}}
\DeclareMathOperator{\m}{\mathfrak{m}}
\DeclareMathOperator{\n}{\mathfrak{n}}
\DeclareMathOperator{\p}{\mathfrak{p}}
\DeclareMathOperator{\X}{\mathfrak{X}}
\DeclareMathOperator{\Y}{\mathfrak{Y}}
\DeclareMathOperator{\OO}{\mathcal{O}}
\DeclareMathOperator{\M}{\mathcal{M}}
\DeclareMathOperator{\R}{\mathcal{R}}
\DeclareMathOperator{\E}{\mathcal{E}}
\DeclareMathOperator{\G}{\mathcal{G}}
\DeclareMathOperator{\F}{\mathcal{F}}
\begin{document}
\title{Birational Spaces}
\author{Uri Brezner}
\email{uri.brezner@mail.huji.ac.il}%
\date{\today}

\begin{abstract}
In this paper we construct the category of \emph{birational spaces} as the category in which the relative Riemann-Zariski spaces of \cite{tem} are naturally included.
Furthermore we develop an analogue of Raynaud's theory.
We prove that the category of quasi-compact and quasi-separated birational spaces is naturally equivalent to the localization of the category of pairs of quasi-compact and quasi-separated schemes with an affine schematically dominant morphism between them localized with respect to relative blow ups and relative normalizations.

\end{abstract}
\maketitle



\section{Introduction}

In the 1930's and 1940's Oscar Zariski studied the problem of resolution of singularities for varieties of characteristic zero.
He introduced the notion of the \emph{Riemann-Zariski space}\footnote{Zariski originally called it the \emph{Riemann manifold} \cite{zar_uniformization}.
Later Nagata offered the name \emph{Zariski-Riemann space} \cite{nag1} to avoid confusion with the Riemann manifold of differential geometry.
In \cite{tem} Temkin calls this the \emph{Riemann-Zariski space} and we follow suite.} of a finitely generated field extension $k \subset K$, denoted $RZ_K(k)$.
This is the space of all valuations on $K/k$ of dimension zero.
Later he showed that the Riemann-Zariski space can be obtained as the projective limit of all projective models of $K \slash k$ \cite{zar_compactness}.

Temkin introduced a relative notion, the \emph{relative Riemann-Zariski space}, $RZ_Y(X)$ for a separated morphism of quasi-compact and quasi-separated schemes $f:Y \to X$.
He defined $RZ_Y(X)$ as the projective limit, of the underlying topological spaces, of all the $Y$-modifications of $X$ \cite{tem2,tem}.

Temkin showed that $RZ_Y(X)$ is isomorphic to the space consisting of  unbounded $X$-valuations on $Y$ equipped with a suitable topology.

Our first aim in this paper is to provide a categorical approach to $RZ$ spaces through the valuation point of view.
Our approach is to first define for given rings $A \to B$ an affinoid birational space $Val(B,A)$ of unbounded $A$-valuations on $B$.
Then general birational spaces $Val(Y,X)$ are glued from affinoid ones along affinoid subdomains.

We restrict our study only to the case of affine, schematically dominant morphisms $f:Y \to X$ of quasi-compact and quasi-separated schemes.
However this is essentially the same as assuming that $f:Y \to X$ is a separated morphism:
by Temkin's decomposition theorem \cite[Theorem 1.1.3]{tem} any separated morphisms $f:Y \to X$ of quasi-compact and quasi-separated schemes factors as $Y \stackrel{j}{\to} Z \to X$ where $j:Y \to Z$ is an affine, schematically dominant morphism and $Z \to X$ is proper.
It will become clear from the construction that $Val(Y,X)=Val(Y,Z')$ by the valuative criterion for properness, so our results hold for separated morphisms.

Our second aim is to develop an analogue of Raynaud's theory.
Let $R$ be a valuation ring of Krull dimension 1, complete with respect to the $J$-adic topology generated by a principal ideal $J = (\pi) \subset R$ where $\pi$ is some non-zero element of the maximal ideal of $R$, and $K$ the fraction field of $R$.
It is then possible to talk about the category of admissible formal $R$-schemes.
On the other hand it is also possible to talk about the category of rigid $K$-spaces.
It was Raynaud \cite{R} who suggested to view rigid spaces entirely within
the framework of formal schemes.
Elaborating the ideas of Raynaud, it is proved in \cite{bl1} that the category of admissible formal $R$-schemes, localized with respect to class of admissible formal blow ups, is naturally equivalent to the category of rigid $K$-spaces which are quasi-compact and quasi-separated.

We will show that the localization of the category of pairs of quasi-compact and quasi-separated schemes with an affine, schematically dominant morphism between them localized with respect to relative blow ups and relative normalizations is naturally equivalent to the category of quasi-compact and quasi-separated birational spaces.

Let $A \subset B$ be commutative rings with unit.
We define spaces of pairs of rings $Spa(B,A)$, and affinoid birational spaces $Val(B,A)$ which is our main interest in Section 2.
We study some of their topological properties and endow $Val(B,A)$ with two sheaves of rings $\OO_{Val(B,A)} \subset \M_{Val(B,A)}$ both making $Val(B,A)$ a locally ringed space.
The main highlight of Section 3 is the proof that the functor $Val$ gives rise to an anti-equivalence from the localization of the category of pairs of rings with respect to relative normalizations to the category of affinoid birational spaces.
Also in Section 3 we globalize the construction by introducing the notion of a general birational space.
These are topological spaces equipped with a pair of sheaves such that the space is locally ringed with respect to both sheaves and is locally isomorphic to $Val(A,B)$.
Finally, in Section 5 we prove that the localization of the category of
pairs of quasi-compact and quasi-separated schemes with an affine schematically dominant morphism between them localized with respect to relative blow ups and relative normalizations is naturally equivalent to the category of quasi-compact and quasi-separated birational spaces.
For the last step, Section 4 is dedicated to the further development of the theory of relative blow ups, and, in particular, prove the universal property of relative blow ups.

\section{Construction of the Space \textit{Val(B,A)}}
Throughout all rings are assumed to be commutative with unity.

\subsection{Valuations on Rings} \label{vals on rings}
In this Subsection we fix terminology and collect general known facts about valuations.

Given a totally ordered abelian group $\Gamma$ (written multiplicatively), we extend $\Gamma$ to a totally ordered monoid $\Gamma\cup\{ 0\}$ by the rules
\begin{displaymath}
0\cdot\gamma=\gamma\cdot0=0 \text{ and }  0<\gamma \qquad \forall \; \gamma \in \Gamma.
\end{displaymath} 

\begin{definition}
Let $B$ be a ring and $\Gamma$ a totally ordered group. A \emph{valuation} $v$ on $B$ is a map $ v:B \to \Gamma\cup\{ 0\}$ satisfying the conditions
\begin{itemize}
\item $v(1)=1$

\item $v(xy)=v(x)v(y) \qquad \forall{x,y}\in B$

\item $v(x+y)\le \max\{v(x),v(y)\} \qquad \forall{x,y}\in B$.

\end{itemize}
\end{definition}

\noindent Note that $\p=\ker{v}= \{ b \in B \mid v(b)=0 \} $ is a prime ideal in $B$.\\
We furthermore assume that $\Gamma$ is generated, as an abelian group, by $v(B-\,\p)$.

\begin{remark}
When $B$ is a field the above definition coincides with the classical definition of a valuation with the value group written multiplicatively.
\end{remark}

Let $v$ be a valuation on $B$ with kernel $\p$.
Denote the residue field of $\p$ by $k(\p)$.
We obtain a diagram
 \begin{displaymath}
    \xymatrix {
        B \ar [d] \ar [r]^-{v}       & \Gamma\cup\{ 0\}  \\
       {B /\p} \ar [d]              &                    \\
       k(\p) \ar [uur]^{\bar {v}}
       }
\end{displaymath} 
where $\bar{v}:k(\p)\to\Gamma\cup\{ 0\}$ is a valuation on $k(\p)$ induced by $v$.
On the other hand a prime ideal $\p \in SpecB$ and a valuation $\bar{v}$ on the residue field $k(\p)$ uniquely determine a valuation $v$ on $B$ with kernel $\p$ by setting
$$v(b)=\begin{cases}
\bar{v}(\bar{b}) & \text{if}\; b \notin \p \\
0 & \text{if}\;  b \in \p
\end{cases}$$
where $\bar{b}$ is the image of $b$ in $k(\p)$.
Hence giving a valuation $v$ on $B$ is equivalent to giving a prime ideal $\p$ and a valuation $\bar{v}$ on the residue field $k(\p)$.

Two valuations $v_1,v_2$ on $B$ are said to be \emph{equivalent} if $\ker{v_1}=\ker{v_2}=\p$ and the induced valuations $\bar{v}_1,\bar{v}_2$ on $k(\p)$ are equivalent in the classical sense \footnote{i.e. they have the same valuation ring or, equivalently, there is an order preserving group isomorphism between their images compatible with the valuations.}.\\
We will identify equivalent valuations. 

With this convention a valuation $v$ on $B$ with kernel $\p$ uniquely defines a valuation ring contained in $k(\p)$ by 
\begin{displaymath}
R_v = \{x\in k(\p) \mid \bar{v}(x)\le 1 \}.
\end{displaymath}

Hence a valuation $v$ on $B$ is equivalent to a diagram 
\begin{displaymath}
    \xymatrix {
       B  \ar[r]       & {k(\p)}           \\
         & R_v. \ar@{^{(}->}[u]
         }
\end{displaymath}

\begin{definition}
Let $B$ be a ring, $A$ a subring and $v$ a valuation on $B$.
We call $v$ an $A$-\emph{valuation} on $B$ if $v(a) \le 1$ for every $a \in A$. 
\end{definition}

Assume $v$ is an $A$-valuation with kernel $\p$. Set $ \q =\p \cap A $.
From the condition $v(a)\le 1  \; \forall a \in A$ we obtain a commutative diagram
\begin{displaymath}
\SelectTips{cm}{}
    \xymatrix {
        R_v \ar@{^{(}->}[r]         & {k(\p)}          & B\ar[l] \ar[r]^-{v}      & {\Gamma\cup\{\,0\}} \\
       {R_v\cap k(\q)} \ar@{^{(}->}[r] \ar@{^{(}->}[u] & {k(\q)} \ar@{^{(}->}[u]  & A.\ar[l]  \ar@{^{(}->}[u] \ar@{-<} `d[l] `[ll] [ll] \\}
\end{displaymath}
We conclude that every $A$-valuation $v$ on $B$ uniquely defines a commutative diagram
\begin{displaymath}
    \xymatrix {
       B  \ar[r]       & {k(\p)}           \\
       A  \ar[r]^-{\Phi}  \ar@{^{(}->}[u]  & R_v. \ar@{^{(}->}[u]
         }
\end{displaymath}
          
\noindent Conversely any such diagram defines an $A$-valuation $v$ on $B$ and
we are justified in identifying the $A$-valuation $v$ on $B$ with the 3-tuple $(\p,R_v,\Phi)$.

\subsection{The Auxiliary Space \textit{Spa(B,A)}} \label{Spa}
For completeness and consistency of notation we collect here results regarding valuation spectra.
The main reference of this subsection is \cite{hub1}.

\begin{definition}
For any pair of rings $A \subset B$ we set
\begin{displaymath}
Spa(B,A)=\{ A\text{-valuations on} \: B \}.
\end{displaymath}
\end{definition}
\noindent Fix a pair of rings $A \subset B$. \\
 
We provide $Spa(B,A)$ with a topology.
For any $a,b\in B$  set
\begin{displaymath}
U_{a,b}=\{v\in Spa(B,A) \mid v(a)\le v(b)\neq 0 \}.
\end{displaymath}
The topology is the one generated by the sub-basis $\{ U_{a,b} \}_{a,b\in B}$.

Given another pair of rings $A' \subset B'$ and a homomorphism of rings $\varphi :B \to B'$ that satisfies $\varphi(A) \subset A'$, composition with $\varphi$ gives rise to the pull back map 
$$\varphi^*:Spa(B',A') \to Spa(B,A).$$
Specifically given an $A'$-valuation $v=(\p,R_v,\Phi) \in Spa(B',A')$, then $v \circ \varphi$ is a valuation on $B$.
Since $\varphi(A) \subset A'$, $v \circ \varphi$ is an $A$-valuation.
So indeed $\varphi^*(v)= v \circ \varphi \in Spa(B,A)$.
Clearly its kernel is $\varphi^{-1}(\p)$.
Now, we have a commutative diagram
\begin{displaymath}
	\xymatrix{
	 & {\phantom{A}}B'{\phantom{A}} \ar[rr] & & k(\p) \\
	 {\phantom{A}}B{\phantom{A}} \ar[rr] \ar[ur]^{\varphi} & & k(\varphi^{-1}(\p)) \ar[ur] & \\
	 & A' \ar'[u][uu] \ar'[r][rr]^(0.4){\Phi} & & R_v \ar[uu]\\
	 A \ar[rr]^{\varphi^*(\Phi)} \ar[uu] \ar[ur] & & R_{v \circ \varphi} \ar[ur] \ar[uu] & .
	 }
\end{displaymath}
It is clear that $R_{v \circ \varphi}=R_v \cap k(\varphi^{-1}(\p))$ and that the ring map $\varphi^*(\Phi)$ is completely determined by $\Phi$ and $\varphi$.
To conclude, $\varphi^*$ takes the point $(\p,R_v,\Phi) \in Spa(B',A')$ to the point $(\varphi^{-1}(\p),R_v \cap k(\varphi^{-1}(\p)),\varphi^*(\Phi)) \in Spa(B,A)$.

Clearly $U_{\varphi(a),\varphi(b)} = {\varphi^*}^{-1}(U_{a,b})$ for any $a,b \in B$.
We obtain:

\begin{lemma} \label{sec-spa}
Let $A \subset B$ and $A' \subset B'$ be rings.
For a homomorphism $\varphi:B \to B'$ satisfying $\varphi(A) \subset A'$ the pull back map  $\varphi^*:Spa(B',A') \to Spa(B,A)$ is continuous.
\end{lemma}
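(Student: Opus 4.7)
The plan is to reduce continuity to a check on a sub-basis, and then verify the relevant preimage identity already hinted at in the paragraph preceding the lemma. Since a map between topological spaces is continuous as soon as the preimage of every member of a generating sub-basis is open, and since the topology on $Spa(B,A)$ is generated by $\{U_{a,b}\}_{a,b \in B}$, it suffices to show that $(\varphi^*)^{-1}(U_{a,b})$ is open in $Spa(B',A')$ for every $a,b \in B$.

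Next I would prove the identity
\begin{displaymath}
(\varphi^*)^{-1}(U_{a,b}) = U_{\varphi(a),\varphi(b)}
\end{displaymath}
by direct unwinding. Given $v \in Spa(B',A')$, the description of $\varphi^*$ worked out before the lemma shows that $\varphi^*(v) = v \circ \varphi$ as a valuation on $B$. Therefore $\varphi^*(v) \in U_{a,b}$ iff $v(\varphi(a)) \le v(\varphi(b)) \neq 0$, which is exactly the condition $v \in U_{\varphi(a),\varphi(b)}$. Since $U_{\varphi(a),\varphi(b)}$ is a sub-basic open set of $Spa(B',A')$, continuity follows.

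There is essentially no obstacle: the content of the lemma is the compatibility of the sub-basis with composition of valuations, and the only thing to be careful about is that on points $\varphi^*$ really is computed by pre-composition (rather than by some more complicated recipe involving the auxiliary data $(\p, R_v, \Phi)$). That subtlety has already been dispatched in the commutative diagram preceding the lemma, where it is checked that $\varphi^*(\p, R_v, \Phi) = (\varphi^{-1}(\p), R_v \cap k(\varphi^{-1}(\p)), \varphi^*(\Phi))$ corresponds to $v \circ \varphi$. With that in hand, the proof is a one-line sub-basis argument.
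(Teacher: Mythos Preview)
Your proposal is correct and matches the paper's approach exactly: the paper states the identity $(\varphi^*)^{-1}(U_{a,b}) = U_{\varphi(a),\varphi(b)}$ in the paragraph immediately preceding the lemma and then records the lemma as an immediate consequence. Your sub-basis argument is precisely this, spelled out in slightly more detail.
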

        
Let $b,a_1,\ldots,a_ n\, \in B$ and assume that $b,a_1,\ldots,a_n$ generate the unit ideal.
Set $B'=B_b$.
Denote the canonical map $B \to B'$ by $\varphi_b$, and set $A'=\varphi_b(A)[\frac{a_1}{b},\ldots,\frac{a_n}{b}]$.
Obviously $A'\subset B'$ so  $Spa(B',A')$ is defined.
We also obtain a commutative diagram
          \begin{displaymath}
              \xymatrix {
                  B \ar[r]^-{\varphi_b}    & B'          \\
                  A \ar@{^{(}->}[u]\ \ar[r]      & A'  \ar@{^{(}->}[u]
                   }
           \end{displaymath}
which gives rise to the pull back map 
\begin{displaymath}
\varphi^*_b:Spa(B',A') \to Spa(B,A).
\end{displaymath}
In this case the pull back map is injective: if $v=(\p,R_v,\Phi)\in Spa(B,A)$ is in the image and $v'=(\p',R_{v'},\Phi')\in Spa(B',A')$ maps to $v$, then necessarily $ b \notin \p $ and $\p'=\p B'$. Hence $k(\p')= k(\p)$, from which follows that $R_{v'} = R_v$ and we have the diagram 
\begin{displaymath}
    \xymatrix {
        {Spec \:k(\p)} \ar[r] \ar [d]       & SpecB' \ar[r] & SpecA' \ar [d]       \\
        Spec \:R_v \ar[rr]^{\Phi} \ar [urr]^{\Phi'} &               & SpecA.
         }
\end{displaymath}
Since $SpecA' \to SpecA $ is separated, $\Phi'$ is unique by the valuative criterion for separateness, so  $v'=(\p',R_{v'},\Phi')$ is unique.

\noindent For such $A' \subset B'$ we regard $Spa(B',A')$ as a subset of $Spa(B,A)$. \\

Consider $v' \in Spa(B',A')$ as an element of $Spa(B,A)$.
As $\frac{a_i}{b} \in A'$ for all $i=1,\ldots ,n$, and $b \notin \ker{v'}$, we see that $v'(a_i)\le v'(b) \ne 0$ for every $i=1,\ldots ,n$.
Hence $Spa(B',A') \subset \{v \in Spa(B,A) \mid v(a_i)\le v(b) \quad \forall \; 1\le i \le n \}$.\\
Conversely, let $v' \in \{v \in Spa(B,A) \mid v(a_i)\le v(b) \quad \forall \; 1\le i \le n \}$.
Since $(b,a_1,\ldots ,a_n)=B$ there are $c_0,c_1, \ldots ,c_n$ in $B$ such that $1=c_0b+c_1a_1+\ldots +c_na_n$.
Applying $v'$ we obtain
\begin{multline*}
1=v'(1)=v'(c_0b+c_1a_1+\ldots +c_na_n)\le \\ \le  \max \{v'(c_0b), v'(c_1a_1),\ldots ,v'(c_na_n) \} \le v'(b) \max_{0 \le i \le n} \{ v'(c_i) \} 
\end{multline*} 
so necessarily $v'(b)\ne 0$.
Hence we can extend $v'$ to a valuation on $B'$ by $v'(\frac{b'}{b})=\frac{v'(b')}{v'(b)}$ for any $b' \in B$.
Since $v'(c)\le 1 \quad \forall \; c \in A$ we have $v'(A')\le 1$ so $v' \in Spa (B',A')$.
It follows that $\{v \in Spa(B,A) \mid v(a_i)\le v(b) \quad \forall \; 1\le i \le n \} \subset Spa(B',A')$, hence we have equality.
Furthermore we have
$$ Spa(B',A')  =\{v \in Spa(B,A) \mid v(a_i)\le v(b) \quad \forall \; 1\le i \le n \}=\cap^n_{i=1} U_{a_i,b}.$$
Hence $Spa(B',A')$ is an open subset of $Spa(B,A)$.
We obtain:

\begin{lemma} \label{rational domains in Spa}
Let $b,a_1,\ldots,a_ n\, \in B$ and assume that $b,a_1,\ldots,a_n$ generate the unit ideal.
Then
 $$Spa\left( B_b,\varphi_b(A)\left[ \frac{a_1}{b},\ldots,\frac{a_n}{b}\right] \right) = \{v \in Spa(B,A) \mid v(a_i)\le v(b) \quad \forall \; 1\le i \le n \}$$
and $Spa\left( B_b,\varphi_b(A)\left[ \frac{a_1}{b},\ldots,\frac{a_n}{b}\right] \right)$ is an open subset of $Spa(B,A)$.
\end{lemma}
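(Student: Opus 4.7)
The plan is to establish the set-theoretic equality first and then deduce openness from it, working mostly through the injective pull-back $\varphi_b^*:Spa(B',A')\to Spa(B,A)$ that identifies $Spa(B',A')$ with a subset of $Spa(B,A)$. Write $U = \{v \in Spa(B,A) \mid v(a_i)\le v(b)\ \forall\, 1\le i\le n\}$; I aim to show $Spa(B',A') = U$ under this identification and then that $U = \bigcap_{i=1}^{n} U_{a_i,b}$, which is open by definition of the sub-basis.

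For the inclusion $Spa(B',A')\subseteq U$, note that any $v'\in Spa(B',A')$ is an $A'$-valuation, and since $\tfrac{a_i}{b}\in A'$ we have $v'\bigl(\tfrac{a_i}{b}\bigr)\le 1$; in particular $b\notin\ker v'$ (else $a_i/b$ would not even be defined on the valuation side), so the usual identity $v'(a_i)=v'(b)\cdot v'\bigl(\tfrac{a_i}{b}\bigr)\le v'(b)$ puts $v'$ in $U$.

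For the reverse inclusion, the key step, and the only genuine obstacle, is to show that any $v \in U$ automatically satisfies $v(b)\ne 0$; once this is known the extension of $v$ to $B_b$ by the formula $v\bigl(\tfrac{b'}{b}\bigr)=v(b')/v(b)$ is forced and well-defined, and by hypothesis $v(\tfrac{a_i}{b})\le 1$, hence the extension sends $A' = \varphi_b(A)[\tfrac{a_1}{b},\dots,\tfrac{a_n}{b}]$ into $R_v$ (using that $A$ already maps into $R_v$ and multiplicativity). The non-vanishing of $v(b)$ is exactly where the unit-ideal hypothesis is used: pick $c_0,\dots,c_n\in B$ with $1=c_0 b+\sum_{i=1}^n c_i a_i$ and apply the strong triangle inequality together with $v(a_i)\le v(b)$ to bound $1=v(1)\le v(b)\cdot\max_i v(c_i)$, forcing $v(b)\ne 0$. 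Uniqueness of this extension, and hence injectivity of $\varphi_b^*$ on the nose, follows from the valuative criterion for separateness applied to $\mathrm{Spec}\,A'\to\mathrm{Spec}\,A$, as in the discussion preceding the lemma.

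Finally, the same argument that produces $v(b)\ne 0$ for any $v\in U$ shows $U = \bigcap_{i=1}^n U_{a_i,b}$: the inclusion $\supseteq$ is immediate from the definition of $U_{a_i,b}$, and for $\subseteq$ every $v\in U$ has $v(b)\ne 0$ so $v(a_i)\le v(b)\ne 0$ for each $i$, placing $v$ in every $U_{a_i,b}$. Since each $U_{a_i,b}$ is a sub-basic open in the topology on $Spa(B,A)$, their finite intersection is open, giving the openness assertion.
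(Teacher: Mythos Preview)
Your proof is correct and follows essentially the same route as the paper: both establish the two inclusions directly, using the unit-ideal hypothesis via the ultrametric inequality to force $v(b)\ne 0$, and then identify the set with $\bigcap_{i=1}^n U_{a_i,b}$ to conclude openness. The only cosmetic difference is that your parenthetical justification for $b\notin\ker v'$ is slightly off---the clean reason is simply that $b$ is a unit in $B_b$, so $v'(b)v'(b^{-1})=1$---but this does not affect the argument.
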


\begin{definition}
We call such a set a \emph{rational domain} of $Spa(B,A)$ and denote it by $\R(\{a_1,\ldots ,a_n\} \slash b)$.
\end{definition}

\begin{remark} \label{inter. rational domains}
Let $a_0, \ldots ,a_n, a'_0, \ldots , a'_m \in B$ such that both $a_0, \ldots ,a_n$ and $a'_0, \ldots ,a'_m$ generate the unit ideal.
By Lemma \ref{rational domains in Spa} 
\begin{displaymath}
\R\left( \{ a_i \} ^{n}_{i=0} \slash a_0\right) = \{v \in Spa(B,A) \mid v(a_i)\le v(a_0) \quad \forall \; 1\le i \le n \} ,
\end{displaymath}
\begin{displaymath}
\R\left( \{a'_j \}^{m}_{j=0} \slash a'_0\right) = \{v \in Spa(B,A) \mid v(a'_i)\le v(a'_0) \quad \forall \; 1\le j \le m \} 
\end{displaymath}
and
  \begin{multline*}
\R\left( \{ a_i a'_j \} _{i,j} \slash a_0a'_0\right) = \\ \{v \in Spa(B,A) \mid v(a_ia'_j)\le v(a_0a'_0) \quad \forall \; 1\le i \le n \text{ and } 1\le j \le m \} .
  \end {multline*}
Hence
\begin{displaymath}
\R\left( \{ a_i \} ^{n}_{i=0} \slash a_0\right) \cap \R\left( \{a'_j \}^{m}_{j=0} \slash a'_0\right) =\R\left( \{ a_i a'_j \} _{i,j} \slash a_0a'_0\right) .
\end{displaymath}
\end{remark}

\begin{remark}
Let $b \in B$. If $b$ is nilpotent there is some $n > 0$ such that $b^n=0$.
For any valuation $v$ we have $0=v(b^n)=v(b)^n$ so $v(b)=0$.
From this its follows that for any $a_1, \ldots ,a_n \in B$ such that $(b,a_1, \ldots ,a_n)=B$ we have $\R(\{a_1,\ldots ,a_n\} \slash b)=\emptyset$.
If $b$ is not nilpotent, there is a prime ideal $\p$ not containing $b$.
Now for any $a_1, \ldots ,a_n \in B$ such that $(b,a_1, \ldots ,a_n)=B$ the rational domain $\R(\{a_1,\ldots ,a_n\} \slash b)$ contains the trivial valuation of $k(\p)$.
Concluding, we have 
\begin{displaymath}
\R(\{a_1,\ldots ,a_n\} \slash b)=\emptyset \Leftrightarrow b \text{ is a nilpotent element}.
\end{displaymath}
\end{remark}

By a \emph{rational covering} we mean the open cover defined by some $a_1, \ldots ,a_n \in B $ generating the unit ideal, that is the rational domains $ \Big\{ \R(\{a_i\}^{n}_{i=1} \slash a_j) \Big\}^{n}_{j=1}$.

In \cite{hub1}, Huber defines the valuation spectrum of a ring $B$
\begin{displaymath}
Spv(B) = \{ \text{valuations on} \: B \}.
\end{displaymath}
He provides it with the topology generated by the sub-basis consisting of sets of the form $\{ v | v(a) \le v(b) \neq 0 \}$ for all $a,b \in B$.
Huber proves in \cite[2.2]{hub1} that $SpvB$ is a spectral space.
Clearly our $Spa(B,A)$ is a subspace of Huber's $Spv(B)$.

\begin{lemma} \label{Spa is spectral}
The topological space $Spa(B,A)$ is spectral.
In particular it is quasi-compact and $T_0$. 	
\end{lemma}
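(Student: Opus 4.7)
The plan is to realize $Spa(B,A)$ as a pro-constructible subset of Huber's spectral valuation spectrum $Spv(B)$, and then invoke the general fact that pro-constructible subsets of spectral spaces, equipped with the induced topology, are again spectral (so in particular quasi-compact and $T_0$).

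The starting observation is that because $v(1)=1\ne 0$ for every valuation $v$, the condition ``$v(a)\le 1$'' is the same as the condition ``$v(a)\le v(1)\ne 0$'', so that
\begin{displaymath}
Spa(B,A)\;=\;\bigcap_{a\in A}\{v\in Spv(B)\mid v(a)\le v(1)\ne 0\}\;=\;\bigcap_{a\in A}U_{a,1}
\end{displaymath}
as a subset of $Spv(B)$. Moreover the topology on $Spa(B,A)$, generated by the sub-basis $\{U_{a,b}\}_{a,b\in B}$, coincides with the topology induced from the sub-basic opens of $Spv(B)$, simply because the defining opens of $Spa(B,A)$ are the traces of the corresponding sub-basic opens of $Spv(B)$.

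Now I would appeal to Huber's result \cite{hub1} that $Spv(B)$ is a spectral space in which the sets $U_{a,b}$ form a sub-basis of quasi-compact opens; each such $U_{a,b}$ is therefore constructible. An arbitrary intersection of constructible subsets is, by definition, closed in the constructible topology, i.e. pro-constructible. Hence $Spa(B,A)=\bigcap_{a\in A}U_{a,1}$ is a pro-constructible subset of $Spv(B)$, and by Hochster's theorem (or the standard reference in the Stacks Project) a pro-constructible subset of a spectral space is itself spectral in the induced topology.

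The only subtlety, and the one thing I would be careful to spell out, is that the topology declared on $Spa(B,A)$ in the definition agrees with the subspace topology from $Spv(B)$; this is immediate from the fact that both are generated by the same family of sets $\{v\mid v(a)\le v(b)\ne 0\}$. Everything else is a direct citation: spectrality of $Spv(B)$ from \cite{hub1} and stability of spectrality under passing to pro-constructible subsets. Since spectral spaces are by definition quasi-compact and $T_0$, the final clause of the lemma is automatic.
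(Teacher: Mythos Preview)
Your proof is correct and is essentially the same approach as the paper's: both show that $Spa(B,A)$ is closed in the constructible topology of $Spv(B)$ and deduce spectrality from that. The paper does this by working directly with Huber's embedding $\phi:Spv(B)\to\{0,1\}^{B\times B}$ and showing $\phi(Spa(B,A))$ is closed in $\phi(Spv(B))$ for the product topology, while your formulation via pro-constructibility ($Spa(B,A)=\bigcap_{a\in A}U_{a,1}$ with each $U_{a,1}$ a quasi-compact open) is the same argument phrased intrinsically.
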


\begin{proof}
Since a closed subspace of a spectral space is again spectral it is enough to show that $Spa(B,A)$ is closed in $Spv(B)$.
Following Huber's argument we just need to show that the set of binary relations $ | $ of $ \phi(SpvB) $ that satisfy $1 \mid a \; \forall a \in A$ is closed in  $ \phi(SpvB) $.
If $\mid' \in \phi(SpvB) - \phi(Spa(B,A))$ then there is an element $a \in A$ such that $1 \nmid' a$.
The set $V_{a}$ of binary relations satisfying $1 \nmid a$ contains $\mid'$ and is open in $\{ 0,1 \}^{B \times B}$.
Now, $V_{a} \cap \phi(SpvB)$ is open in $\phi(SpvB)$, contains $\mid'$ and
\begin{displaymath}
	( V_{a} \cap \phi(SpvB)) \bigcap \phi(Spa(B,A)) = \emptyset.
\end{displaymath}

\end{proof}

\subsection{The Space \textit{Val(B,A)}}

We say that a valuation $v:B \to \Gamma\cup\{0\}$ is \emph{bounded} if there is an element $\gamma \in \Gamma$ such that $v(b)<\gamma$ for every $b \in B$.

\begin{definition}
For any pair of rings $A \subset B$ we set
	\begin{equation*}
	Val(B,A)=\{v \in Spa(B,A) \mid \emph{$v$ is unbounded} \}
	\end{equation*}
with the induced subspace topology from $Spa(B,A)$.
\end{definition}

\noindent As a subspace of a $T_0$ space, $Val(B,A)$ is also a $T_0$ space.

For a valuation $v$ on $B$ with abelian group $\Gamma$ we denote by $c\Gamma_{v}$ the convex subgroup of $\Gamma$ generated by $\{ v(b) \mid b \in B \quad 1 \le v(b) \}$. 
For any convex subgroup $\Lambda$ we define a map $v':B \to \Lambda\cup\{0\}$ by $v'(b)=\begin{cases}
v(b) & \text{if}\; v(b)\in \Lambda \\
0 & \text{if}\; v(b) \notin \Lambda
\end{cases}$.\\
It is easily seen that $v'$ is a valuation on $B$ if and only if $c\Gamma_{v} \subset \Lambda$.

The valuation $v'$ obtained in this way is called a \emph{primary specialization} of $v$ associated with $\Lambda$ \cite[\S 1.2]{hub2}.
Note that $\ker v \subset \ker v'$.
It is easy to see that a valuation $v$ is not bounded if and only if it has no primary specialization other then itself.

\begin{lemma} \label{mapping of pri-spe}
Let $A \subset B$ and $A' \subset B'$ be rings, and $\varphi:B \to B'$ a  homomorphism satisfying $\varphi(A) \subset A'$.
Assume $v,w \in Spa(B',A')$ such that $w$ is a primary specialization of $v$.
Then $\varphi^*(w)$ is a primary specialization of $\varphi^*(v)$.
\end{lemma}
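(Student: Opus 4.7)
The plan is to transfer the convex subgroup witnessing the primary specialization from $v$ to $\varphi^*(v)$ by intersecting it with the smaller value group, and then to verify that the resulting primary specialization of $\varphi^*(v)$ literally coincides with the map $w\circ\varphi$.

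First, I would set up notation. Write $\Gamma_v\cup\{0\}$ for the target of $v$, and let $\Lambda\subset\Gamma_v$ be a convex subgroup with $c\Gamma_v\subset\Lambda$ such that $w$ is the primary specialization of $v$ associated with $\Lambda$, i.e.\ $w(b')=v(b')$ when $v(b')\in\Lambda$ and $w(b')=0$ otherwise. By the paper's convention, the value group $\Gamma_{\varphi^*(v)}$ of $\varphi^*(v)=v\circ\varphi$ is the subgroup of $\Gamma_v$ generated by $\{v(\varphi(b))\mid b\in B,\ \varphi(b)\notin\ker v\}$. Set $\Lambda':=\Lambda\cap \Gamma_{\varphi^*(v)}$; this is a convex subgroup of $\Gamma_{\varphi^*(v)}$ because $\Lambda$ is convex in $\Gamma_v$.

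Next, I would verify the condition $c\Gamma_{\varphi^*(v)}\subset\Lambda'$ needed to make the primary specialization of $\varphi^*(v)$ along $\Lambda'$ well-defined. Every generator of $c\Gamma_{\varphi^*(v)}$ is of the form $v(\varphi(b))$ with $1\le v(\varphi(b))$, hence it already lies in $c\Gamma_v\subset\Lambda$, and by construction also in $\Gamma_{\varphi^*(v)}$, so it lies in $\Lambda'$. Since $\Lambda'$ is convex in $\Gamma_{\varphi^*(v)}$, it contains the convex subgroup they generate, which is exactly $c\Gamma_{\varphi^*(v)}$.

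Finally, let $u$ denote the primary specialization of $\varphi^*(v)$ associated with $\Lambda'$. I would check pointwise on $B$ that $u=\varphi^*(w)$: for any $b\in B$, if $v(\varphi(b))=0$ then both $u(b)$ and $(\varphi^*(w))(b)=w(\varphi(b))$ vanish; if $v(\varphi(b))\ne 0$, then $v(\varphi(b))\in\Gamma_{\varphi^*(v)}$, so the condition $v(\varphi(b))\in\Lambda'$ reduces to $v(\varphi(b))\in\Lambda$, and in that case $u(b)=v(\varphi(b))=w(\varphi(b))$, while in the opposite case both sides are $0$.

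There is no real obstacle here; the only subtle point is that $\Gamma_{\varphi^*(v)}$ is typically a proper subgroup of $\Gamma_v$, so one must not try to use $\Lambda$ itself as the convex subgroup on the $B$-side but rather its intersection $\Lambda'$ with $\Gamma_{\varphi^*(v)}$. Once this is recognized, everything is a direct unwinding of the definitions.
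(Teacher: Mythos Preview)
Your proof is correct and follows the same approach as the paper: write $w$ as the cut-off of $v$ along a convex subgroup $\Lambda$ and simply substitute $\varphi(b)$ for $b'$. The paper's proof is three lines and stops at the displayed formula for $w(\varphi(b))$, implicitly using $\Lambda$ itself; you are more careful in observing that, under the paper's convention that the value group is generated by the nonzero values, one should really work with $\Lambda'=\Lambda\cap\Gamma_{\varphi^*(v)}$ and check $c\Gamma_{\varphi^*(v)}\subset\Lambda'$, which you do correctly.
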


\begin{proof}
Assume that $v:B' \to \Gamma\cup\{\,0\}$ and that $\Lambda$ is the convex subgroup of $\Gamma$ associated with $w$.
Then for any $b' \in B'$ we have $w(b')=\begin{cases}
v(b') & \text{if}\; v(b')\in \Lambda \\
0 & \text{if}\; v(b') \notin \Lambda
\end{cases}$.
It now follows that for any $b \in B$ we have $w(\varphi(b))=\begin{cases}
v(\varphi(b)) & \text{if}\; v(\varphi(b))\in \Lambda \\
0 & \text{if}\; v(\varphi(b)) \notin \Lambda
\end{cases}$.
\end{proof}

For $v \in Spa(B,A)$, let $P_v$ be the the subset of all primary specializations of $v$.
Primary specialization induces a partial order on $P_v$ by the rule  $u \le w$ if $u$ is a primary specialization of $w$ for $u,w \in P_v$.

\begin{prop} \label{oredering}
For any $v\in Spa(B,A)$, the set $P_v$ of primary specializations of $v$ is totally ordered and has a minimal element.
\end{prop}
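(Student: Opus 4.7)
The plan is to reduce the statement to a classical structural fact: the convex subgroups of a totally ordered abelian group form a chain under inclusion. From the setup preceding the statement, every primary specialization of $v:B\to\Gamma\cup\{0\}$ has the form $v_\Lambda$ for some convex subgroup $\Lambda$ of $\Gamma$ containing $c\Gamma_v$, via the formula $v_\Lambda(b)=v(b)$ if $v(b)\in\Lambda$ and $v_\Lambda(b)=0$ otherwise. I would organize the argument around the surjection $\mathcal{C}_v\twoheadrightarrow P_v$, $\Lambda\mapsto v_\Lambda$, where $\mathcal{C}_v:=\{\Lambda\subset\Gamma\text{ convex}:c\Gamma_v\subset\Lambda\}$.

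The heart of the argument is to show that this surjection carries inclusion to the order $\le$ on $P_v$: if $\Lambda_1\subset\Lambda_2$ in $\mathcal{C}_v$, then $v_{\Lambda_1}$ is a primary specialization of $v_{\Lambda_2}$. The observation that makes this work is that for any $\Lambda\in\mathcal{C}_v$ the values of $v_\Lambda$ that are $\ge 1$ coincide with those of $v$, so the convex subgroup they generate inside the value group $\Gamma_{v_\Lambda}$ equals $c\Gamma_v\cap\Gamma_{v_\Lambda}$. A short bookkeeping calculation then shows that applying the primary-specialization construction to $v_{\Lambda_2}$ with the convex subgroup $\Lambda_1\cap\Gamma_{v_{\Lambda_2}}$ of its value group recovers $v_{\Lambda_1}$ on the nose.

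With this compatibility in hand the lemma reduces to two facts about $\mathcal{C}_v$. First, $\mathcal{C}_v$ is totally ordered: the convex subgroups of a totally ordered abelian group form a chain, since given two convex subgroups $H_1,H_2$ with positive elements $h_i\in H_i\setminus H_{3-i}$, one can compare $h_1$ and $h_2$ under the total order on $\Gamma$ and use convexity of the group containing the larger element to derive a contradiction. Second, $\mathcal{C}_v$ has minimum $c\Gamma_v$ itself, which is by construction a convex subgroup. Pushing these through the order-preserving surjection $\mathcal{C}_v\twoheadrightarrow P_v$ yields that $P_v$ is totally ordered with minimum $v_{c\Gamma_v}$.

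The main obstacle I anticipate is the compatibility check in the second step: one must be careful that the value group of $v_\Lambda$ is typically a proper subgroup of $\Lambda$, so iterating the primary-specialization construction requires intersecting with the actual value group at each stage, and one should confirm that the resulting valuation is genuinely an $A$-valuation (which is immediate because the values of $v$ on $A$ lie in $[0,1]\subset c\Gamma_v\cup\{0\}\subset\Lambda\cup\{0\}$). Once this is unwound, the remainder of the proof is either a direct definitional check or an appeal to the standard chain structure of convex subgroups of a totally ordered abelian group.
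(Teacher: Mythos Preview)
Your proposal is correct and follows essentially the same approach as the paper: both reduce to the chain structure of convex subgroups of $\Gamma$ containing $c\Gamma_v$, verify that $c\Gamma_{v_\Lambda}=c\Gamma_v$ so that the smaller convex subgroup induces a primary specialization of the valuation associated to the larger one, and identify the minimum as $v_{c\Gamma_v}$. The only difference is organizational—you package the argument as an order-preserving surjection $\mathcal{C}_v\twoheadrightarrow P_v$, whereas the paper works directly with two given primary specializations $w,u$ and checks by cases that $u$ is a primary specialization of $w$.
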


\begin{proof}
Let $v:B \to \Gamma\cup\{0\}$ be a valuation on $B$.
Let $w:B \to \Lambda\cup\{0\} $ and $ u:B \to \Delta\cup\{0\}$ be two distinct primary specializations of $v$.
We may regard $\Lambda$ and $\Delta$ as convex subgroups of $\Gamma$, so one is contained in the other.
As both $w$ and $u$ are primary specialization of $v$, both $\Lambda$ and $\Delta$ contain $c\Gamma_{v}$.
Assume $\Delta \subset \Lambda$. We want to show that $u$ is a primary specialization of $w$, i.e. $u(b)=\begin{cases}
w(b) & \text{if}\; w(b)\in \Delta \\
0 & \text{if}\; w(b) \notin \Delta
\end{cases}$.\\

For any $b \in B$ if $w(b) > 1$ then $v(b)=w(b)$, hence $c\Lambda_{w} \subset c\Gamma_{v}$.
Conversely if $v(b) > 1$ then since $c\Gamma_{v} \subset \Lambda$ we have $w(b)=v(b)$, hence $c\Lambda_{w} = c\Gamma_{v}$.
It follows that $\Delta$ is a convex subgroup of $\Lambda$ containing $c\Lambda_{w}$.

For any $b \in B$, if $w(b) \in \Delta$ then $w(b)=v(b) \in \Delta$.
It follows that $u(b)=v(b)=w(b)$.
If $w(b) \notin \Delta$ then either $w(b)=0$ or $0 \ne w(b) \in \Lambda$.
If $w(b)=0$ then $v(b) < \Lambda$, hence $v(b) < \Delta$ and $u(b)=0$.
If $w(b) \ne 0$ then $w(b)=v(b) \notin \Delta $, so $u(b)=0$.

The minimal element of $P_v$ is the primary specialization associated with $c\Gamma_{v}$.
\end{proof}

Next we give an algebraic criterion for a valuation $v \in Spa(B,A)$ to be in $Val(B,A)$.

\begin{lemma} \label{in val}
Let $v=(\p,R_{v},\Phi) \in Spa(B,A)$.\\
Then $v=(\p,R_{v},\Phi) \in Val(B,A)$ if and only if the canonical map $B\otimes_{A} R_{v} \to k(\p)$ is surjective.
\end{lemma}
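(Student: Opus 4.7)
The plan is to identify the image $C := \mathrm{Im}(\alpha)$ of the canonical map $\alpha : B \otimes_A R_v \to k(\p)$ with an explicit valuation overring of $R_v$ in $k(\p)$, and then to deduce the lemma from the value-group characterization of unboundedness established in the preceding discussion.

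First I would observe that $\alpha$ is given by $b \otimes r \mapsto \bar b \cdot \Phi(r)$, so $C$ is nothing but the subring of $k(\p)$ generated by $R_v$ and by the image $\bar B$ of $B \to k(\p)$. Since $k(\p)$ is a field and $R_v$ is a valuation ring of $k(\p)$, every intermediate ring $R_v \subset C \subset k(\p)$ is itself a valuation ring of $k(\p)$; by the classical theory these overrings are in bijection with the convex subgroups $H \subset \Gamma$ via
\begin{equation*}
R_H := \{ x \in k(\p) \mid \bar v(x) \le h \text{ for some } h \in H \} .
\end{equation*}
The crux is therefore to identify which convex subgroup corresponds to $C$.

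I claim $C = R_H$ for $H := c\Gamma_v$. The inclusion $C \subset R_{c\Gamma_v}$ is straightforward: $R_v \subset R_{c\Gamma_v}$, and for each $b \in B$ one has $\bar b \in R_v$ when $v(b) \le 1$ while $v(b) \in c\Gamma_v$ directly when $v(b) \ge 1$. For the reverse inclusion, given $x \in R_{c\Gamma_v}$ with $\bar v(x) \le \lambda$ for some $\lambda \in c\Gamma_v$ (which we may take $\ge 1$), I would use that the set $\{v(b) \mid v(b) \ge 1\}$ is closed under multiplication and hence cofinal in the positive part of $c\Gamma_v$, producing $b_0 \in B$ with $v(b_0) \ge \lambda$; then $r := x/\bar b_0 \in R_v$ and $x = \bar b_0 \cdot r \in \bar B \cdot R_v \subset C$.

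Once $C = R_{c\Gamma_v}$ is in hand, the lemma follows at once: $C = k(\p)$ iff the overring is trivial iff $c\Gamma_v = \Gamma$, and this last condition is precisely the statement that $v$ has no primary specialization other than itself, i.e.\ $v \in Val(B,A)$. I expect the main subtlety to lie in the reverse inclusion $R_{c\Gamma_v} \subset C$, where one must translate the purely group-theoretic condition $\lambda \in c\Gamma_v$ into the existence of an actual $b_0 \in B$ whose valuation dominates $\lambda$; the description of $c\Gamma_v$ as the convex hull of the multiplicative monoid $\{v(b) : v(b) \ge 1\}$ is what makes this step work.
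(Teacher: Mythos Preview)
Your argument is correct. The computational core coincides with the paper's: in both cases the key step for surjectivity is that for $\bar v(x)>1$ one finds $b_0\in B$ with $v(b_0)\ge \bar v(x)$ and writes $x=\bar b_0\cdot(x/\bar b_0)$ with $x/\bar b_0\in R_v$, while the converse bounds $\bar v\bigl(\sum \bar b_i r_i\bigr)$ by $\max_i v(b_i)$.

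The difference is organizational. The paper argues the two implications directly, element by element. You instead first identify the image $C$ intrinsically as the overring $R_{c\Gamma_v}$ and then read off the equivalence $C=k(\p)\Leftrightarrow c\Gamma_v=\Gamma$. Your route invokes the (standard) dictionary between overrings of a valuation ring and convex subgroups, which the paper does not need; in exchange you get a cleaner separation of concerns and a statement ($\mathrm{Im}(\alpha)=R_{c\Gamma_v}$) that is slightly sharper than the lemma itself. Note, incidentally, that you do not actually use the full bijection: all that is needed is that $R_{c\Gamma_v}$ is a subring containing $R_v$ and $\bar B$ (giving $C\subset R_{c\Gamma_v}$), together with the cofinality of $\{v(b):v(b)\ge 1\}$ in the positive cone of $c\Gamma_v$ (giving the reverse inclusion).
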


\begin{proof}
Assume that $B\otimes _{A} R_{v} \to k(\p)$ is surjective. 
Since we assume that $\Gamma$ is generated by the image of $B-\,\p$, for any $1 < \gamma \in \Gamma$ there is $0 \neq f \in k(\p)$ satisfying $\gamma = \bar{v}(f)$.
Let $b_i \in B \:, r_i \in R_v$ for $i=1,\ldots,n$ such that $\sum b_i \otimes r_i \in B\otimes_{A} R_{v} $ maps to $f$ in $k(\p)$.
Assume $v(b_1)=max\{v(b_i)\} $.
Denote the image of $b_i$ in $k(\p)$ by $\bar{b_i}$.
Now, as $\bar{v}(r_i) \le 1$, we have
\begin{displaymath}
\gamma=\bar{v}(f)=\bar{v}(\sum \bar{b_i}r_i) \le max \{\bar{v}(\bar{b_i}r_i) \} \le max \{ \bar{v}(\bar{b_i}) \} = v(b_1).
\end{displaymath}
Hence $\gamma$ does not bound $v$.

Conversely, for $v=(\p,R_{v},\Phi) \in Val(B,A)$ we have a diagram
\begin{displaymath}
    \xymatrix {
                    &    &  k(\p) \ar [r]^-{\bar{v}}& {\Gamma\cup\{\,0\}} \\
        B \ar[r] \ar [urr] & B\otimes_{A}R_{v} \ar@{-->}[ur]  &   \\
        A \ar[r] \ar [u] &     R_{v}.  \ar [u] \ar [uur]        & 
         }
\end{displaymath}
For any $f \in k(\p)$, if $\bar{v}(f)\le 1$ then $f \in R_{v}$ and $1 \otimes f \in B\otimes_{A}R_{v}$ maps to $f \in k(\p)$.
Assume $\bar{v}(f) > 1$.
As $v \in Val(B,A)$ we see that $\Gamma = c\Gamma_{v}$, so there exists $d \in B$ satisfying $\bar{v}(f)\le v(d)$.
It follows that $f \slash \bar{d} \in R_{v}$ where $\bar{d}$ is the image of $d$ in $k(\p)$ and $d \otimes f \slash \bar{d} \in B\otimes_{A}R_{v}$ maps to $f \in k(\p)$.
\end {proof}

\begin{remark} \label{kk}
Since for any $A \subset B$ and $R_{v}$ we always have
\begin{displaymath}
    \xymatrix  {B\otimes_{\mathbb{Z}}R_{v} \ar@{>>}[r] & B\otimes_{A}R_{v}},
\end{displaymath} 
we can replace in the above lemma $B\otimes_{A}R_{v}$ with $B\otimes_{\mathbb{Z}}R_{v}$.
\end{remark}

\begin{remark} \label{kkk}
Equivalently we can say that $v$ is in $Val(B,A)$ if and only if 
\begin{displaymath}
 Spec\, k(\p) \to Spec B \times_{Spec A}Spec R_{v},
\end{displaymath}
 or equivalently 
\begin{displaymath}
Spec\, k(\p) \to Spec B \times Spec R_{v},
\end{displaymath} 
is a closed immersion.
\end{remark}

As we have seen, given another pair $A' \subset B'$ and a homomorphism
\begin{displaymath}
	\xymatrix{
	B \ar[r]^{\varphi} & B' \\
	A \ar[u] \ar[r] & A' \ar[u]
	}
\end{displaymath}
composition with $\varphi$ induces a map $\varphi^*:Spa(B',A') \to Spa(B,A)$.
However $\varphi^*$ does not necessarily restrict to a map $Val(B',A') \to Val(B,A)$.

\begin{lemma} \label{val to val}
Let
\begin{displaymath}
	\xymatrix{
	B \ar[r]^{\varphi} & B' \\
	A \ar[u] \ar[r] & A' \ar[u]
	}
\end{displaymath}
as above.
If the induced homomorphism $B \otimes_{A}A' \to B'$ is integral then composition with $\varphi$ induces a map $\varphi^*:Val(B',A') \to Val(B,A)$.
\end{lemma}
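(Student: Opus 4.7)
The plan is to verify, for each $v' = (\p', R_{v'}, \Phi') \in Val(B',A')$, that its image $v = \varphi^*(v') = (\p, R_v, \Phi)$ (where $\p = \varphi^{-1}(\p')$ and $R_v = R_{v'} \cap k(\p)$) lies in $Val(B,A)$, by applying the tensor-product criterion of Lemma \ref{in val}. Concretely, I need to show that every $\alpha \in k(\p)$ decomposes as $\bar{b} \cdot r$ for some $b \in B$ with image $\bar{b} \in k(\p)$ and some $r \in R_v$.

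If $\bar{v}'(\alpha) \le 1$, then $\alpha \in R_{v'} \cap k(\p) = R_v$ and one takes $b = 1$, $r = \alpha$. Assume now that $\bar{v}'(\alpha) > 1$. Running the proof of Lemma \ref{in val} for $v' \in Val(B',A')$ produces some $d' \in B'$ with $v'(d') \ge \bar{v}'(\alpha)$. The remaining task is to replace this $d'$ by (the image of) an element of $B$ of comparably large $v$-value, and this is exactly where the integrality hypothesis enters.

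To do so, write a monic integral relation $d'^n + c_{n-1} d'^{n-1} + \dots + c_0 = 0$ over the image of $B \otimes_A A' \to B'$, and expand each $c_i = \sum_j \varphi(b_{ij}) \tau(a'_{ij})$ with $b_{ij} \in B$, $a'_{ij} \in A'$, and $\tau : A' \to B'$ the structure map. Applying $v'$, using $v'(\tau(a'_{ij})) \le 1$ (since $v'$ is an $A'$-valuation), and invoking the non-archimedean inequality together with cancellation of $v'(d')^i$ (permissible because $v'(d') \ne 0$), one extracts an index $i < n$ and some $b \in B$ with $v'(\varphi(b)) \ge v'(d')^{n-i}$. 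Because $\bar{v}'(\alpha) > 1$ and $n - i \ge 1$, this chains to $v(b) \ge \bar{v}'(\alpha)^{n-i} \ge \bar{v}'(\alpha)$, so $\alpha/\bar{b}$ has $\bar{v}'$-value $\le 1$ and lies in $R_{v'} \cap k(\p) = R_v$, yielding the desired decomposition $\alpha = \bar{b} \cdot (\alpha/\bar{b})$.

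The main subtlety I expect is precisely this transfer step: the integral equation only controls the power $v'(d')^{n-i}$, not $v'(d')$ itself. What rescues the argument is that the problematic case $\bar{v}'(\alpha) > 1$ is isolated at the outset, so raising to the positive integer power $n - i$ only amplifies the estimate and the desired bound $v(b) \ge \bar{v}'(\alpha)$ survives. Continuity of $\varphi^*$ on the restricted domain follows immediately from Lemma \ref{sec-spa} and the fact that $Val$ carries the subspace topology.
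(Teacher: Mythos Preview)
Your proposal is correct and follows essentially the same argument as the paper: reduce to the surjectivity criterion of Lemma~\ref{in val}, handle $\bar v'(\alpha)\le 1$ trivially, and for $\bar v'(\alpha)>1$ use unboundedness of $v'$ to find $d'\in B'$, then apply a monic integral relation over $B\otimes_A A'$ and the non-archimedean inequality (with $v'(A')\le 1$) to descend to an element of $B$ of sufficiently large value. The only cosmetic difference is that the paper deduces $v'(d')\le v'(d')^{n-i}\le v'(c_i)$ directly (using $v'(d')>1$), whereas you pass through $\bar v'(\alpha)^{n-i}$; both routes use the same key observation that $n-i\ge 1$ and the relevant value exceeds $1$.
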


\begin{proof}
For any $v=(\p,R_{v},\Phi) \in Val(B',A')$ set 
\begin{displaymath}
\varphi^*(v)=v\circ \varphi=(\varphi^{-1}(\p),R_{v\circ \varphi},\varphi^*(\Phi) )=(\varphi^*(\p),\varphi^*(R_v),\varphi^*(\Phi)).
\end{displaymath}

\noindent We know that $\varphi^*(v) \in Spa(B,A)$.
In order to show that $\varphi^*(v) \in Val(B,A)$, by Lemma \ref{in val}, we need to show that $B\otimes _{A} \varphi^*(R_{v}) \to k(\varphi^*(\p))$ is surjective.

\noindent The homomorphism $\varphi$ gives rise to the digram
\begin{displaymath} 
\xymatrix {
               &   B' \ar [rr]     &      &  {k(\p)}      \\
      {\phantom{AA}}B{\phantom{A}} \ar[ur]^{\varphi} \ar [rr]      & & {k(\varphi^*(\p))}      \ar [ur]^{\bar{\varphi}} & \\
       &  A' \ar'[r]^{\Phi} [rr] \ar'[u] [uu]  &    & {R_{v}} \ar [uu] \\
      A \ar [ur] \ar [uu] \ar [rr]^{\varphi^*(\Phi)}    & & R_{\varphi^*(v)}\ar [ur] \ar [uu] &
         }
\end{displaymath} 
from which we see that there is a diagram
\begin{displaymath} 
       \xymatrix {
         B' \otimes_{A'}R_{v} \ar@{>>}[r]  &   {k(\p)}   \\
         B \otimes_{A}R_{\varphi^*(v)} \ar[u] \ar [r] &  {k(\varphi^*(\p))}. \ar[u]
         }
\end{displaymath}
The upper horizontal arrow is surjective by Lemma \ref{in val}.

For any $\alpha \in k(\varphi^*(\p))$, if $\overline{\varphi^*(v)}(\alpha) \le 1$ then $\alpha$ is already in $R_{\varphi^*(v)}$ (recall that $\overline{\varphi^*(v)}$ is the induced valuation on $k(\varphi^*(\p))$ ).

If  $\overline{\varphi^*(v)}(\alpha) > 1$, then there is $b' \in B'$ such that $\overline{\varphi^*(v)}(\alpha) \le v(b')$ since  $\bar{\varphi}(\alpha) \in k(\p)$ and $v$ is in $Val(B',A')$.
Since $B \otimes_{A}A' \to B'$ is integral we have $x_0, \ldots ,x_{n-1} \in Im\left( B \otimes_{A}A' \to B'\right) $ such that $b'^n + x_{n-1}b'^{n-1} + \ldots +x_0 = 0$.
As $v(0) = 0$ there is some $0 \le i \le n-1$ such that $v(b'^n) \le v(x_ib'^i)$.
It follows that $v(b') \le v(b')^{n-i} \le v(x_i)$.

Now, there are $a_1, \ldots ,a_m \in A'$ and $b_1, \ldots ,b_m \in B$ such that $\sum_j a_j\otimes b_j$ maps to $x_i$, so $v(b') \le max \{ v(a_j) \cdot v\circ\varphi(b_j)\} \le max \{v\circ\varphi(b_j)\}$.
The last inequality is due to the fact that $v(a) \le 1$ for every $a \in A'$.
Choose $b \in \{b_1,\ldots, b_m\}$ such that $\varphi^*(v)(b)=v\circ\varphi(b)=max \{v\circ\varphi(b_j)\}$.
Now we have $\overline{\varphi^*(v)}(\alpha) \le \varphi^*(v)(b)$.
Denoting the image of $b$ in $k(\varphi^*(\p))$ by $\bar{b}$, we have $\overline{\varphi^*(v)}(\alpha) \le \overline{\varphi^*(v)}(\bar{b})$ or in other words $\overline{\varphi^*(v)}(\frac{\alpha}{\bar{b}})\le 1$, hence $\frac{\alpha}{\bar{b}} \in R_{\varphi^*(v)}$ and $b \otimes \frac{\alpha}{\bar{b}} \in B \otimes_{A} R_{\varphi^*(v)}$ maps to $\alpha$.

\end{proof}

\subsection{Rational Domains}

Set $\X=Val(B,A)$. \\
For $b,a_1,\ldots,a_ n \, \in B$  generating the unit ideal we defined a rational domain in $Spa(B,A)$ as 
\begin{displaymath}
\mathcal{R}(\{a_1,\ldots ,a_n\} \slash b) = \{v \in Spa(B,A) \mid v(a_i)\le v(b) \}.
\end{displaymath}
We call the set  $\mathcal{R}(\{a_1,\ldots ,a_n\} \slash b) \cap \X$ a \emph{rational domain} in $\X$ and denote it by $\X(\{a_1,\ldots ,a_n\} \slash b)$.

\noindent Obviously  $\X\left( \{a_1,\ldots ,a_n\} \slash b\right) =Val\left( B_b,\varphi_b(A)\left[ \frac{a_1}{b},\ldots,\frac{a_n}{b}\right] \right)  $.

\begin{prop} \label{base}
The rational domains of $\X$ form a basis for the topology.
\end{prop}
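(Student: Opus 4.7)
I would prove this by establishing two facts: (i) rational domains of $\X$ are closed under finite intersection, and (ii) every sub-basic open $U_{a,b}\cap\X$ is a union of rational domains of $\X$. Together these imply that every basic open, being a finite intersection of sub-basic opens, is a union of finite intersections of rational domains, hence a union of rational domains, which is exactly the statement that the rational domains form a basis.

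Fact (i) reduces to Remark~\ref{inter. rational domains}: since $\X(\{a_i\}/a_0)=\R(\{a_i\}/a_0)\cap\X$, the intersection formula for rational domains in $Spa(B,A)$ carries over to $\X$, and the required unit-ideal condition on the products $\{a_i a'_j\}$ follows by multiplying out the representations $1=\sum c_i a_i$ and $1=\sum c'_j a'_j$.

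For (ii), fix $v\in U_{a,b}\cap\X$, so $v(a)\le v(b)\ne 0$ and $v$ is unbounded. Since $v(b)\in\Gamma$, also $v(b)^{-1}\in\Gamma$, and unboundedness of $v$ furnishes $d\in B$ with $v(d)\ge v(b)^{-1}$, equivalently $v(bd)\ge 1$. I then claim that the rational domain $\X(\{ad,1\}/bd)$ — well-defined since $bd,ad,1$ trivially generate the unit ideal via $1$ — both contains $v$ and is contained in $U_{a,b}\cap\X$. The first holds because $v(ad)=v(a)v(d)\le v(b)v(d)=v(bd)$ and $v(1)=1\le v(bd)$. For the second, any $v'\in\X(\{ad,1\}/bd)$ satisfies $v'(bd)\ge 1$, which forces $v'(b),v'(d)\ne 0$; cancelling $v'(d)$ in $v'(a)v'(d)\le v'(b)v'(d)$ then yields $v'(a)\le v'(b)\ne 0$, i.e.\ $v'\in U_{a,b}$.

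The main obstacle is precisely the unit-ideal hypothesis in the definition of a rational domain: writing $U_{a,b}\cap\X$ naively as $\X(\{a\}/b)$ fails whenever $(a,b)\ne B$. Inserting $1$ into the list of numerators dispatches the unit-ideal issue but imposes $v'(bd)\ge 1$, and this is exactly where the unboundedness of $v$ becomes essential: it is the feature of points of $\X$ (as opposed to points of $Spa(B,A)$) that supplies the auxiliary element $d$ with $v(bd)\ge 1$, which is why the analogous statement is more natural for $\X$ than for $Spa(B,A)$.
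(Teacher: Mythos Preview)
Your proof is correct and follows essentially the same approach as the paper: both use unboundedness of $v$ to produce $d\in B$ with $v(bd)\ge 1$, then insert $1$ among the numerators so that the defining elements generate the unit ideal, and both invoke Remark~\ref{inter. rational domains} for the intersection property. The only cosmetic difference is that the paper first reduces a finite intersection $\bigcap_i U_{a_i,b_i}$ to a common denominator $b=\prod b_i$ before applying the $d$-trick once, whereas you apply it to each sub-basic open separately and then intersect; the key idea is identical.
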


\begin{proof}
Let $w \in \X$ and $U$ an open neighbourhood of $w$ in $Spa(B,A)$ (i.e. $U \cap \X$ is an open neighbourhood of $w$ in $\X$).
By the definition of the topology there is a natural number $N$ and $a_i,b_i \in B$ such that $v(a_i) \le v(b_i) \neq 0$ for each $i=1, \ldots, N$ such that $w \in \bigcap_i U_{a_i,b_i} \subset U$.
By taking the products $ \prod_{i} c_i$ where $c_i \in \{ a_i,b_i \}$ and $b= \prod_i b_i$, replacing $N$ with a suitable natural number, the $a_i$-s with the above products and shrinking $U$, we may assume that we have $a_1, \dots ,a_N , b \in B$ satisfying $w \in \cap_i U_{a_i,b} = U$.

As $w(b) \neq 0$ and $w \in \X$ we see that $w(b) \in c\Gamma_{w}$.
Since $w(b)^{-1}$ is not a bound of $w$, there exists $d \in B$ such that $w(b)^{-1} \le w(d)$.
It follows that $1=w(1) \le w(db)$ and 
\begin{multline*}
w \in  U \cap U_{1,db} = \{ v \in Spa(B,A) \mid v(a_i) \le v(b) \neq 0,1 \le v(db) \} =\\= \R(\{da_1,\ldots ,da_n,1\} \slash db).
\end{multline*}
Hence 
\begin{displaymath}
w \in \mathcal{R}(\{da_1,\ldots ,da_n,1\} \slash db) \cap \X=\X(\{da_1,\ldots ,da_n,1\} \slash db) \subset U \cap \X .
\end{displaymath}

It remains to show that the rational domains satisfy the intersection condition of a basis. However it follows from Remark \ref{inter. rational domains} that
\begin{displaymath}
\X\left( \{ a_i \} ^{n}_{i=1} \slash a_0\right) \cap \X\left( \{a'_j \}^{m}_{j=1} \slash a'_0\right) =\X\left( \{ a_i a'_j \} _{i,j} \slash a_0a'_0\right) .
\end{displaymath}
\end{proof}

In \cite{tem}, Temkin defines the \emph{semi-valuation ring} $S_{v}$ for a valuation $v= (\p,R_{v},\Phi)$ on $B$.
It is the pre-image of the valuation ring $R_{v}$ in the local ring $B_{\p}$. 
The valuation on $B$ induces a valuation on $S_{v}$.
We call $B_{\p}$ the \emph{semi-fraction ring} of $S_{v}$.

We briefly recall several properties of a semi-valuation ring (for details see \cite[\S2]{tem}).

\begin{remark} \label{k0}
If $S_{v}$ is a semi-valuation ring with semi-fraction ring $B_{\p}$ then
\begin{enumerate}[(i)]
	\item the maximal ideal $\p B_{\p}$ of $B_{\p}$ is contained in $S_{v}$.
	\item considering $v$ as a valuation on $B_{\p}$ or $S_{v}$ we have $\ker {v} = \p B_{\p}$.
	\item $(S_{v})_{ker v} = B_{\p}$.
	\item $S_{v} \slash ker v = R_{v}$, in particular $S_{v}$ is a local ring.
	\item for any pair $g,h \in S_{v}$ such that $v(g)\le v(h) \ne 0$ we have $g \in h S_{v}$.\label{k2}
	\item for any co-prime $g,h \in B_{\p}$ (i.e. $gB_{\p} + hB_{\p} = B_{\p}$), either $g \in h S_{v}$ or $h \in g S_{v}$.
	\item the converse of (vi) is also true: for a pair of rings $C \subset D$, if for any two co-prime elements $g,h \in D$ either $g \in h C$ or $h \in g C$ then there exists a valuation on $D$ such that $C$ is a semi-valuation ring of $v$ and $D$ is its semi-fraction ring.\label{k1}
\end{enumerate}
\end{remark}

\begin{cor}
Let $v=(\p,R_{v},\Phi) \in Spa(B,A)$.\\
Then $v=(\p,R_{v},\Phi) \in Val(B,A)$ if and only if the canonical map $B\otimes _{A} S_{v} \to k(\p)$ is surjective.
\end{cor}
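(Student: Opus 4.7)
The strategy is to reduce this corollary directly to Lemma \ref{in val}, which gives the analogous criterion with $R_v$ in place of $S_v$. The bridge between the two formulations is the canonical surjection $S_v \twoheadrightarrow R_v$ provided by Remark \ref{k0}(iv), which identifies $R_v$ with $S_v/\ker v$.

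First I would verify that $S_v$ inherits a natural structure of $A$-algebra compatible with the $A$-algebra structure on $R_v$. Since $\Phi(A) \subset R_v$, the composition $A \to B \to B_\p$ lands inside the preimage of $R_v$ in $B_\p$, which is exactly $S_v$. Thus $S_v$ is an $A$-algebra and the quotient $S_v \twoheadrightarrow R_v$ is a map of $A$-algebras. Moreover, the embedding $R_v \hookrightarrow k(\p)$ and the map $S_v \hookrightarrow B_\p \twoheadrightarrow k(\p)$ fit together with this quotient, so both the map $S_v \to k(\p)$ and the map $R_v \to k(\p)$ agree after applying $S_v \twoheadrightarrow R_v$.

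Next I would tensor with $B$ over $A$ to obtain the commutative triangle
\[
\xymatrix{
B\otimes_A S_v \ar@{->>}[r] \ar[dr] & B\otimes_A R_v \ar[d] \\
 & k(\p)
}
\]
whose horizontal arrow is surjective because $S_v \twoheadrightarrow R_v$ is. From this triangle it is formal that the diagonal map $B\otimes_A S_v \to k(\p)$ is surjective if and only if the vertical map $B\otimes_A R_v \to k(\p)$ is. An appeal to Lemma \ref{in val} then finishes the argument in both directions.

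The proof is essentially tautological once the relationship between $S_v$ and $R_v$ is unwound, so I do not anticipate a real obstacle; the only point that deserves careful checking is that the two maps into $k(\p)$ genuinely agree under the quotient $S_v \twoheadrightarrow R_v$, which is ensured by the identification $S_v/\ker v = R_v$ recorded in Remark \ref{k0}(iv).
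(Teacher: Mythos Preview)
Your proposal is correct and follows essentially the same approach as the paper: both arguments factor the map $B\otimes_A S_v \to k(\p)$ through $B\otimes_A R_v$ using the surjection $S_v \twoheadrightarrow R_v$ (from $S_v/\ker v = R_v$), observe that this reduces the surjectivity question to the $R_v$-version, and then invoke Lemma~\ref{in val}. Your extra care in checking the $A$-algebra compatibility is fine but not strictly necessary.
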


\begin{proof}
As $S_v$ is the pull back of $R_v$ in $B_{\p}$, the canonical map $B\otimes _{A} S_{v} \to k(\p)$ factors through  $B\otimes _{A} R_{v}$.
Since $S_{v} \slash \p = R_{v}$ the ring map $B\otimes _{A} S_{v} \to B\otimes _{A} R_{v}$ is always surjective .
Hence $B\otimes _{A} S_{v} \to k(\p)$ is surjective if and only if  $B\otimes _{A} R_{v} \to k(\p)$ is surjective.
Now the result follows from Lemma \ref{in val}.
\end{proof}

Let us study how semi-valuation rings behave under pullback.

\begin{remark} \label{pullback semi-val}
Let $A \subset B$ and $A' \subset B'$ be rings and $\varphi:B \to B'$ a ring homomorphism such that 
\begin{displaymath}
	\xymatrix{
	B \ar[r]^{\varphi} & B' \\
	A \ar[u] \ar[r] & A' \ar[u]
	}
\end{displaymath}
commutes.
Consider the map $\varphi^*:Spa(B',A') \to Spa(B,A)$.
Let $v=(\p,R_{v},\Phi) \in Spa(B',A')$, then $\varphi^*(v)=v\circ\varphi$ and $R_{\varphi^*(v)}=R_v \cap k(\varphi^{-1}(\p))$.
We have a diagram
\begin{displaymath} 
\xymatrix {
               &   B'_{\p} \ar [rr]     &      &  {k(\p)}      \\
      B_{\varphi^{-1}(\p)} \ar[ur] \ar [rr]      & & {k(\varphi^*(\p))}      \ar [ur] & \\
       &  S_v \ar'[r] [rr] \ar'[u] [uu]  &    & {R_{v}} \ar [uu] \\
      S_{\varphi^*(v)}  \ar [uu] \ar [rr] \ar@{-->}
[ur]  & & R_{\varphi^*(v)}\ar [ur] \ar [uu] &
         }
\end{displaymath} 
from which we see that $B_{\varphi^{-1}(\p)} \to B'_{\p}$ restricts to a local homomorphism $S_{\varphi^*(v)} \to S_v$ of semi-valuation rings.
\end{remark}

Given two valuations $v=(\p,R_{v},\Phi), w=(\q,R_{w},\Psi) \in Spa(B,A)$ with $\p \subset \q$, we would like to know if $w$ a primary specialization of $v$.

\noindent Assume $ v:B \to \Gamma\cup\{\,0\}$ and $ w:B \to \Delta\cup\{\,0\}$.
Then it follows that $\Gamma = v(B_{\p}^{\times})$ and $\Delta = w(B_{\q}^{\times})$.
Since $\p \subset \q$ we have a canonical homomorphism $B_{\q} \to B_{\p}$.

\begin{lemma} \label{pri-spe}
With the above notation, $w$ is a primary specialization of $v$ if and only if the canonical homomorphism $B_{\q} \to B_{\p}$ restricts to a local homomorphism $S_w \to S_v$ of semi-valuation rings.
\end{lemma}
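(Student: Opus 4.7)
My plan is to prove the two directions separately, with the reverse direction carrying the bulk of the work.

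\textbf{Forward direction.} Suppose $w$ is a primary specialization of $v$ associated to a convex subgroup $\Lambda \subset \Gamma$ with $c\Gamma_{v} \subset \Lambda$. I would check directly that $B_{\q} \to B_{\p}$ sends $S_{w}$ into $S_{v}$ and $\m_{S_{w}}$ into $\m_{S_{v}}$. For $x = b/s \in S_{w}$ with $s \notin \q$, note that $w(s) \ne 0$ forces $v(s) \in \Lambda$. If $x \notin \ker w$, then also $b \notin \q$, both $v(b), v(s) \in \Lambda$, and $v(x)$ is simply $w(x)$ viewed through the inclusion $\Lambda \subset \Gamma$, transferring $w(x) \le 1$ (resp.\ $w(x) < 1$) to its $v$-counterpart. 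If $x \in \ker w$ and $b \in \q \setminus \p$, then $w(b) = 0$ forces $v(b) \notin \Lambda$; but $v(b) \ge 1$ would put $v(b) \in c\Gamma_{v} \subset \Lambda$, contradiction, so $v(b) < 1$, and convexity of $\Lambda$ around the identity then forces $v(b)$ strictly below every element of $\Lambda$. Combined with $v(s) \in \Lambda$ this gives $v(x) < \Lambda$, in particular $v(x) < 1$, so $x \in \m_{S_{v}}$.

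\textbf{Reverse direction.} Suppose $B_{\q} \to B_{\p}$ restricts to a local homomorphism $S_{w} \to S_{v}$. I would first observe that locality gives the biconditionals $w(x) \le 1 \Leftrightarrow v(x) \le 1$ and $w(x) < 1 \Leftrightarrow v(x) < 1$ for every $x \in B_{\q}$: the forward implications are the local-hom hypothesis, and the reverse ones follow by applying these to $1/x$ when $x$ is a unit. This makes the assignment $\phi(w(x)) := v(x)$ on $x \in B_{\q}^{\times}$ a well-defined, injective, order-preserving group homomorphism $\phi : \Delta \hookrightarrow \Gamma$ with image $\Lambda_{0} := v(B_{\q}^{\times})$.

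\textbf{Key technical facts, and the main obstacle.} Two further facts remain. (i) $c\Gamma_{v} \subset \Lambda_{0}$: if $b \in B$ satisfies $v(b) \ge 1$ and also $b \in \q$, then $b \in \ker w \subset \m_{S_{w}}$, and locality forces $v(b) < 1$, a contradiction; hence $b \notin \q$ and $v(b) \in \Lambda_{0}$. (ii) For $d \in \q \setminus \p$ and any $u \in B_{\q}^{\times}$, the element $d/u$ lies in $\q B_{\q} = \ker w \subset \m_{S_{w}}$, so locality forces $v(d/u) < 1$, i.e.\ $v(d) < v(u)$; letting $u$ range over $B_{\q}^{\times}$ shows $v(d)$ is strictly below every element of $\Lambda_{0}$. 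I would then set $\Lambda$ to be the convex hull of $\Lambda_{0}$ in $\Gamma$; this is automatically a convex subgroup, since the convex hull of any subgroup of a totally ordered abelian group is again a subgroup, and it contains $\Lambda_{0} \supset c\Gamma_{v}$. A direct case check then verifies $w = w_{\Lambda}$ on $B$: for $b \notin \q \cup \p$ the identity $\phi(w(b)) = v(b)$ places $v(b)$ in $\Lambda_{0} \subset \Lambda$ and identifies the two values; for $b \in \q \setminus \p$, fact (ii) places $v(b)$ strictly below $\Lambda_{0}$, hence outside $\Lambda$ (the convex hull adds no elements below $\Lambda_{0}$), giving $w_{\Lambda}(b) = 0 = w(b)$. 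The main obstacle is fact (ii): it is the deepest use of locality and is precisely what allows the convex-hull trick to succeed, sidestepping the subtler question of whether $\Lambda_{0}$ itself is convex in $\Gamma$.
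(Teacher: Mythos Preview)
Your proof is correct and follows essentially the same architecture as the paper's: the forward direction is a direct case analysis on $x=b/s\in S_w$, and the reverse direction builds an order-preserving embedding $\Delta\hookrightarrow\Gamma$ via $w(x)\mapsto v(x)$ on $B_{\q}^{\times}$ and then checks the primary-specialization formula. The one substantive difference is that the paper simply identifies $\Delta$ with its image $\Lambda_0$ and declares the formula ``clear,'' implicitly treating $\Lambda_0$ as the convex subgroup, whereas you pass to the convex hull $\Lambda$ of $\Lambda_0$ and use your fact~(ii) to show that no $v(b)$ with $b\in\q\setminus\p$ sneaks into $\Lambda$. Your version is the more careful one: the definition of primary specialization requires a \emph{convex} subgroup, and the paper does not verify that $\Lambda_0$ itself is convex. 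Your convex-hull step closes that gap without needing to decide the convexity question, at essentially no extra cost.
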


\begin{proof}
If $w$ is a primary specialisation of $v$, then $\Delta$ is a convex subgroup of $\Gamma$ containing $c\Gamma_v$ and
$w(b)=\begin{cases}
v(b) & \text{if}\; v(b)\in \Delta \\
0 & \text{if}\; v(b) \notin \Delta
\end{cases}$.

\noindent For any $x \in S_w \subset B_{\q}$ there are $b,s \in B$  , $s \notin \q=\ker w$ such that $x=\frac{b}{s}$.
Since $x \in S_w $ we have $w(b) \le w(s) \neq 0$.
Again from $\p \subset \q$ we see that $s \notin \p=\ker v$ and by the definition of a primary specialisation $w(s)=v(s)$.
If $w(b) \neq 0$ then, again $w(b)=v(b)$.
If $w(b) = 0$ then $v(b)\notin \Delta $.
Since $c\Gamma_v \subset \Delta $ then $v(b) < \Delta $, in particular  $v(b) < v(s)$.
In either case we have $v(b) \le v(s)$ so the image of $x=\frac{b}{s}$ in $B_{\p}$ is in $S_v$.
Furthermore if $x$ is in the maximal ideal of $S_w$ i.e. $w(x) <1$ then the same is true for its image in $S_v$, meaning that the homomorphism $B_{\q} \to B_{\p}$ restricts to a local homomorphism $S_w \to S_v$.

Conversely assume that we have a diagram
\begin{displaymath}
	\xymatrix{
	B_{\q} \ar[r] & B_{\p} \\
	S_w \ar[u] \ar[r] & S_v \ar[u] 
	}  
\end{displaymath}
with the bottom arrow a local homomorphism.
Define $\alpha: \Delta \to \Gamma$ by sending $w(x) \in \Delta$ ($x \in B_{\q}^{\times}$) to $v(x) \in \Gamma$.
Let $x_1,x_2 \in B_{\q}^{\times}$ such that $w(x_1)=w(x_2)$.
All elements of $ B_{\q}^{\times}$ are invertible so $x_1x^{-1}_2$ is also in $ B_{\q}^{\times}$.
Obviously  $w(x_1x^{-1}_2)=1$ so $x_1x^{-1}_2 \in S_w$.
Since  $S_w \to S_v$ is a local homomorphism we also have $v(x_1x^{-1}_2)=1$.
Hence $v(x_1)=v(x_2)$ and $\alpha$ is well defined.
Since $B_{\q}^{\times}$ is a multiplicative subset of $B_{\q}$ and valuations are multiplicative, $\alpha$ is also multiplicative.
As $1=w(1)=v(1)$ we get that $\alpha(1)=1$ i.e. $\alpha$ is a group homomorphism.\\
Note that if $x=\frac{b}{s} \in B_{\q} - S_w$ then its image under $B_{\q} \to B_{\p}$ lies in $B_{\p} - S_v$,
since if $w(x)>1$ and $v(x) \le 1$ then in particular $x^{-1} \in S_w$ and $w(x^{-1}) < 1$.
By the locality of the homomorphism we have $v(x^{-1}) < 1$.\\
But this is impossible since it would imply that $1=v(1)=v(xx^{-1}) <1$.
Now if $x \in B_{\q}^{\times}$ with $v(x)=1$ i.e. its image under $B_{\q} \to B_{\p}$ has value 1, then $x$ is already in $S_w$ and by locality of the homomorphism we have $w(x)=1$.
Hence $\alpha$ is an injection and we may regard $\Delta$ as a subgroup of $\Gamma$.\\
It is now clear that $w(b)=\begin{cases}
v(b) & \text{if}\; v(b)\in \Delta \\
0 & \text{if}\; v(b) \notin \Delta
\end{cases}$.
\end{proof}

\begin{obs}
Let $U=\X(\{a_1,\ldots ,a_n\} \slash b)$ be a rational domain.
Then for any valuation $v \in U$ we have $1 \le v(b)$.
For  if not then $\varphi(b) \in S_v \subset B_{\p}$ with $\varphi: B \to B_{\p}$.
It then follows that the ideal generated by $\varphi(b)$ is a proper ideal of the semi-valuation ring $S_v$.
As $v(a_i) \le v(b)$ we have $\varphi(a_i) \in S_v$.
Furthermore $\varphi(a_i) \in \varphi(b)S_v$ by Remark \ref{k0}.(\ref{k2}).
But $b,a_1,\ldots,a_ n \, \in B$  generate the unit ideal so $\varphi(b),\varphi(a_1),\ldots,\varphi(a_ n) \, \in B_{\p}$ generate the unit ideal which is a contradiction.

\end{obs}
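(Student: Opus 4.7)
The plan is to argue by contradiction. Suppose $v=(\p,R_v,\Phi)\in U$ satisfies $v(b)<1$; the goal is to derive a contradiction from the hypothesis that $b,a_1,\dots,a_n$ generate the unit ideal in $B$. First I would localise at $\p$: since $v\in U$ means in particular $v(b)\neq 0$, we have $b\notin\p$, so the images $\varphi(b),\varphi(a_i)\in B_\p$ under the canonical map $\varphi\colon B\to B_\p$ are all well defined, and $v$ extends to a valuation on $B_\p$ with kernel $\p B_\p$.

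Next I would pass to the semi-valuation ring $S_v\subset B_\p$. From $v(b)<1$ it follows that $\varphi(b)\in S_v$ and that $\varphi(b)$ lies in the maximal ideal of the local ring $S_v$ (using Remark \ref{k0}.(iv) to identify $S_v/\ker v$ with $R_v$), so $\varphi(b)S_v$ is a proper ideal of $S_v$. Then from $v(a_i)\le v(b)\neq 0$ for every $i$, Remark \ref{k0}.(\ref{k2}) applied inside $S_v$ gives $\varphi(a_i)\in\varphi(b)S_v$ for each $i$.

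Finally, since $b,a_1,\dots,a_n$ generate the unit ideal of $B$, their images generate the unit ideal of $B_\p$, so there exist $c_0,\dots,c_n\in B_\p$ with $1=c_0\varphi(b)+\sum_{i=1}^n c_i\varphi(a_i)$. Combined with the previous step this exhibits $1$ as an element of the proper ideal $\varphi(b)S_v\subset S_v$, a contradiction. Hence $v(b)\ge 1$.

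I do not expect any serious obstacle here: the argument is essentially a direct application of the divisibility property of semi-valuation rings (Remark \ref{k0}.(\ref{k2})) together with the unit-ideal hypothesis built into the definition of a rational domain. The only point requiring a bit of care is to verify that one may work inside $S_v$ rather than $R_v$, which is the reason for first localising at $\p$ and only then invoking the semi-valuation ring machinery.
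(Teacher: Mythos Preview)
Your argument follows the paper's reasoning line by line, and both contain the same gap in the final step. You assert that the relation $1=c_0\varphi(b)+\sum_{i} c_i\varphi(a_i)$ with $c_i\in B_\p$ ``exhibits $1$ as an element of the proper ideal $\varphi(b)S_v$''. It does not: the coefficients $c_i$ lie in $B_\p$, not in $S_v$, so while each $\varphi(a_i)\in\varphi(b)S_v$, the products $c_i\varphi(a_i)$ are only known to lie in $\varphi(b)B_\p$. All the equation establishes is that $\varphi(b)$ is a unit in $B_\p$---which was already evident from $b\notin\p$. The paper's inline argument ends with exactly this non-sequitur (``$\varphi(b),\ldots,\varphi(a_n)\in B_\p$ generate the unit ideal which is a contradiction'').

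In fact the claim is false without an extra hypothesis, so the gap cannot be repaired by a small patch. Take $B=K[x,x^{-1}]$, $A=K$, $b=x$, $a_1=x^2$; since $x$ is already a unit, $(b,a_1)=B$. The valuation $v$ on $K(x)$ with $v(x)<1$ restricts to an unbounded $K$-valuation on $B$ (because $v(x^{-n})$ is unbounded above), so $v\in Val(B,A)$; moreover $v(a_1)=v(x)^2\le v(x)=v(b)\neq 0$, hence $v\in\X(\{a_1\}/b)$, yet $v(b)<1$. The conclusion $1\le v(b)$ \emph{is} immediate whenever $1$ occurs among the $a_i$ (then $1=v(1)\le v(b)$ directly), and this is the situation in which the fact is actually invoked elsewhere; but as a general statement neither your argument nor the paper's survives.
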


\begin{theorem}[Transitivity of Rational Domains] \label{trasit}
Let $\X'$ be a rational domain in $\X$ and $\X''$ a rational domain in $\X'$.
Then $\X''$ is a rational domain in $\X$.
\end{theorem}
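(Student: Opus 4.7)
\emph{Proof proposal.} The plan is to exhibit explicit elements of $B$ that describe $\X''$ as a rational domain of $\X$, obtained by multiplicatively combining the generators of $\X'\subset\X$ with lifts to $B$ of the generators of $\X''\subset\X'$, and to circumvent the unit-ideal requirement by adjoining $1$ as an extra numerator.

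Concretely, I would write $\X' = \X(\{a_1,\ldots,a_n\}\slash b)$, so that $b,a_1,\ldots,a_n$ generate the unit ideal of $B$, and $\X'' = \X'(\{\alpha_1,\ldots,\alpha_m\}\slash\beta)$ with $\beta,\alpha_1,\ldots,\alpha_m \in B_b$ generating the unit ideal of $B_b$. Multiplying the $\alpha_j$ and $\beta$ through by a common power $b^N$ (a unit of $B_b$) changes neither the rational domain nor the unit-ideal condition in $B_b$, so I may assume $\beta = \tilde{b}/b^N$ and $\alpha_j = \tilde{a}_j/b^N$ with $\tilde{b},\tilde{a}_j \in B$. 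The proposal is then
\begin{displaymath}
\X'' \;=\; \X\bigl(\{1\}\cup\{a_i\tilde{b}\}_{i}\cup\{b\tilde{a}_j\}_{j} \,\slash\, b\tilde{b}\bigr).
\end{displaymath}

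The unit-ideal condition for the right-hand side is immediate because $1$ sits among the generators. I would then check the set equality by two inclusions. For $\supseteq$: the condition $v(b\tilde{b})\ne 0$ forces $v(b),v(\tilde{b})\ne 0$, and then $v(a_i\tilde{b})\le v(b\tilde{b})$, $v(b\tilde{a}_j)\le v(b\tilde{b})$ cancel to $v(a_i)\le v(b)$ and $v(\tilde{a}_j)\le v(\tilde{b})$, whence $v\in\X''$. For $\subseteq$: for $v\in\X''$ the multiplicative bounds $v(a_i\tilde{b})\le v(b\tilde{b})$ and $v(b\tilde{a}_j)\le v(b\tilde{b})$ are immediate; the only non-obvious condition is $v(1)\le v(b\tilde{b})$. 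This I would deduce by invoking the Observation preceding the theorem twice: once for $\X'\subset\X$ to get $v(b)\ge 1$, and once for $\X''\subset\X'$ to get $v(\beta)\ge 1$, i.e.\ $v(\tilde{b})\ge v(b)^N\ge 1$; hence $v(b\tilde{b})\ge 1 = v(1)$.

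The main obstacle is the unit-ideal requirement in $B$: the ``natural'' generator set $\{b\tilde{b},\, a_i\tilde{b},\, b\tilde{a}_j\}$ generates only the ideal $(\tilde{b},\tilde{a}_j)$, which is guaranteed merely to contain some power of $b$ in $B$, not $1$. Adjoining $1$ as an auxiliary numerator resolves the obstruction at no cost, since the additional rational-domain condition $v(1)\le v(b\tilde{b})$ it imposes is forced on every $v\in\X''$ by the Observation.
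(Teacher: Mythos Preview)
Your argument is correct. Both inclusions go through exactly as you sketch, and the crucial use of the Observation (applied once to $\X'\subset\X$ and once to $\X''\subset\X'$) to force $v(b\tilde b)\ge 1$ is the key step that makes the adjoined numerator $1$ harmless.

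The paper takes a different route: instead of writing down a single closed formula for $\X''$ as a rational domain of $\X$, it proceeds by a four-case reduction. The easy cases are handled first (denominators equal to $1$), then the general case is rewritten as
\[
\X'' \;=\; \X'(\{1\}/f)\;\cap\;\bigcap_{j}\,\X'(\{1\}/f)(\{f^{-1}h_j\}/1),
\]
and each factor is pushed down to $\X$ separately using the earlier cases and the fact that intersections of rational domains are rational domains. The Observation enters in the paper's argument at the same place it enters in yours, namely to control the denominator when passing from $B_b$ back to $B$. What your approach buys is directness: you get the answer in one line and avoid the case analysis. What the paper's approach buys is modularity: it reduces everything to the two atomic moves $\X'(\{1\}/h)$ and $\X'(\{h\}/1)$, which makes the role of the Observation more transparent and mirrors how one often manipulates such domains in practice (e.g.\ in the adic or rigid-analytic setting).
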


\begin{proof}
\begin{case}
$\X'=\X\left( \{ a_i \} ^{n}_{i=1} \slash 1\right)$ and $\X''=\X'\left( \{ b_j \} ^{m}_{j=1} \slash 1\right)$
\end{case}
Then we have $a_1,\ldots,a_ n  \in B$ and
\begin{displaymath}
\X'= \{ v \in \X \mid v(a_i) \le 1 \}=Val(B',A')
\end{displaymath}
with $B'=B$ and $A'=A[a_1,\ldots,a_n]$.
We also have $b_1,\ldots,b_ m  \in B'$ and
\begin{displaymath}
\X''=\X'\left( \{ b_j \} ^{m}_{j=1} \slash 1\right)=Val(B'',A'')
\end{displaymath}
with $B''=B'=B$ and $A''=A[a_1,\ldots,a_n][b_1,\ldots,b_m]$.
We see that $\X''= \{ v \in \X \mid v(a_i) \le 1 \text{ and } v(b_j) \le 1 \}=\X\left( \{ a_i \} \cup \{ b_j \} \slash 1\right)$.

\begin{case}
$\X'=\X\left( \{ a_i \} ^{n}_{i=1} \slash b\right)$ and $\X''=\X'\left( \{ 1 \}  \slash h\right)$
\end{case}
Now $\X'=\{ v \in \X \mid v(a_i) \le v(b) \}=Val(B',A')$ with $B'=B_b$ and $A'=\varphi_b(A)[\frac{a_1}{b},\ldots,\frac{a_n}{b}]$.
Also $\X''= \{ v \in \X' \mid 1 \le  v(h) \}$ for $h \in B_b$.
There exists $g \in B$ such that $\varphi_b(g) = b^kh$ for some $k \ge 0$.
Note that 
\begin{displaymath}
\X''=\X' \cap \{ v \in \X \mid v(b^k) \le v(g) \}.
\end{displaymath}
As we saw, if $v \in \X'$ then $1 \le v(b)$.
So $1 \le v(b^k)$ as well.
Then clearly $1 \le v(g)$ for any $v \in \X''$.
Thus we can write
\begin{displaymath}
\X''=\X\left( \{ a_i \} ^{n}_{i=1} \slash b\right) \cap \X\left( \{ b^k,1 \}  \slash g\right),
\end{displaymath}
so $\X''$ is an intersection of two rational domains which is, as we already saw, a rational domain.
\begin{case}
$\X'=\X\left( \{ a_i \} ^{n}_{i=1} \slash b\right)$ and $\X''=\X'\left( \{ h \}  \slash 1\right)$
\end{case}
Again $\X'=\{ v \in \X \mid v(a_i) \le v(b) \}=Val(B',A')$, but now $\X''= \{ v \in \X' \mid  v(h) \le 1 \}$ for $h \in B_b$.
Again there exists $g \in B$ such that $\varphi_b(g) = b^kh$ for some $k \ge 0$.
Now
\begin{displaymath}
\X''=\X' \cap \{ v \in \X \mid v(g) \le  v(b^k)  \},
\end{displaymath}
which can be rewritten as
\begin{displaymath}
\X''=\X\left( \{ a_i \} ^{n}_{i=1} \slash b\right) \cap \X \left( \{ g,1 \}  \slash b^k \right).
\end{displaymath}
\begin{case}
$\X'=\X\left( \{ a_i \} ^{n}_{i=1} \slash b\right)$ and $\X''=\X'\left( \{ h_j \} ^{m}_{j=1} \slash f\right)$
\end{case}
Finally we have $\X'=Val(B',A')$ and $\X''=\X'\left( \{ h_j \} ^{m}_{j=1} \slash f\right)$ for $f,h_1,\ldots,h_m \in B'$ generating the unit ideal.
As we saw
\begin{multline*}
\X''=\{ v \in \X' \mid v(h_j) \le v(f) \}=\\ = \{ v \in \X' \mid v(h_j) \le v(f)  \text{ and } 1 \le v(f)\} \subset \X'\left( \{ 1 \}  \slash f\right).
\end{multline*}
Furthermore $f$ is a unit in $B'_f$ so $f^{-1}h_1,\ldots,f^{-1}h_m $ are elements in $B'_f$.
Denoting $ \X'\left( \{ 1 \}  \slash f\right)$ by $\X^{(3)}$ we have
\begin{displaymath}
\X''=\X^{(3)}\left( \{ f^{-1}h_j \} ^{m}_{j=1} \slash 1\right) = \cap_j \X^{(3)}\left( \{ f^{-1}h_j \}  \slash 1\right)
\end{displaymath}
and repeated application of the previous cases gives the result.
\end{proof}

There is an obvious retraction $r: Spa(B,A) \to Val(B,A)$ given by sending every valuation $v$ to its minimal primary specialization. \label{retract}

\begin{prop} \label{ret-cont}
$r: Spa(B,A) \to Val(B,A)$ is continuous.
\end{prop}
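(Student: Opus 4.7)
The plan is to reduce the problem to showing that the preimage under $r$ of a single basic open of $\X = Val(B,A)$ is open in $Spa(B,A)$. By Proposition \ref{base}, the rational domains $\X(\{a_1,\ldots,a_n\}/b)$ form a basis of the topology of $\X$, so it is enough to compute $r^{-1}\bigl(\X(\{a_1,\ldots,a_n\}/b)\bigr)$ for an arbitrary rational domain and verify it is open. The concrete claim I would prove is
\begin{displaymath}
r^{-1}\bigl(\X(\{a_1,\ldots,a_n\}/b)\bigr) \;=\; U_{1,b} \,\cap\, \bigcap_{i=1}^{n} U_{a_i,b},
\end{displaymath}
which is manifestly a finite intersection of sub-basic opens of $Spa(B,A)$.

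To prove the equality, I would use the description of $r(v)$ coming from Proposition \ref{oredering}: $r(v)$ is the primary specialization of $v$ associated with the convex subgroup $c\Gamma_v \subset \Gamma$ generated by $\{v(x) : v(x) \ge 1\}$. Concretely, $r(v)(x) = v(x)$ when $v(x) \in c\Gamma_v$, and $r(v)(x) = 0$ otherwise. I would also invoke the observation preceding Theorem \ref{trasit}, which says that every point $w$ of a rational domain $\X(\{a_i\}/b)$ satisfies $1 \le w(b)$.

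For the inclusion $(\subseteq)$, if $r(v) \in \X(\{a_i\}/b)$ then the observation forces $1 \le r(v)(b)$, so $v(b) \in c\Gamma_v$ and $r(v)(b) = v(b) \ge 1$; hence $v \in U_{1,b}$. For each $i$ I would split into cases: if $v(a_i) \in c\Gamma_v$ then $r(v)(a_i) = v(a_i)$, so the condition $r(v)(a_i) \le r(v)(b)$ reads $v(a_i) \le v(b)$; if $v(a_i) \notin c\Gamma_v$ then necessarily $v(a_i) < 1$ (since any element $\ge 1$ lies in $c\Gamma_v$ by construction), and $v(a_i) < 1 \le v(b)$ is automatic. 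Either way, $v \in U_{a_i,b}$. For the inclusion $(\supseteq)$, the same case distinction run in reverse shows that whenever $v(b) \ge 1$ and $v(a_i) \le v(b)$ for all $i$, we have $r(v)(b) = v(b) \ne 0$ and $r(v)(a_i) \le r(v)(b)$, placing $r(v)$ in the rational domain.

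The main obstacle I expect is the bookkeeping around the convex subgroup $c\Gamma_v$ — specifically, keeping the case split between $v(a_i) \in c\Gamma_v$ and $v(a_i) \notin c\Gamma_v$ clean and confirming the key fact that $c\Gamma_v$ contains every element of $\Gamma$ that is $\ge 1$, so that the ``$v(a_i) \ge 1$'' case is automatically covered by the first branch. Once this identity and the convexity/containment are in place, continuity follows immediately since the right-hand side is a finite intersection of sub-basic opens.
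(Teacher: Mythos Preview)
Your claimed identity
\[
r^{-1}\bigl(\X(\{a_1,\ldots,a_n\}/b)\bigr)\;=\;U_{1,b}\cap\bigcap_{i=1}^{n}U_{a_i,b}
\]
is false: the inclusion $(\subseteq)$ does not hold. Take $B=k[x,x^{-1}]$, $A=k$, $b=x$, $a_1=x-x^2$; since $x$ is a unit in $B$, these generate the unit ideal. The valuation $v$ that is trivial on $k$ with $v(x)=t<1$ is unbounded (because $v(x^{-1})=t^{-1}>1$ forces $c\Gamma_v=\Gamma$), and $v(a_1)=t\cdot v(1-x)=t=v(b)$, so $v\in\X(\{a_1\}/b)\subset r^{-1}(\X(\{a_1\}/b))$; yet $v(b)=t<1$, so $v\notin U_{1,b}$. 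The Observation you invoke is therefore not valid in the generality stated: its proof implicitly assumes that the relation $1=c_0b+\sum_i c_i a_i$ lands in the ideal $\varphi(b)S_v$, but the coefficients $c_j\in B$ need not lie in $S_v$.

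The paper's proof does not use that Observation. It shows instead that $r^{-1}(U)=W:=\R(\{a_1,\ldots,a_n\}/b)=\bigcap_i U_{a_i,b}$, with no extra factor $U_{1,b}$. Your $(\supseteq)$ argument already gives $U_{1,b}\cap W\subset r^{-1}(U)$, but the substantive step is the stronger inclusion $W\subset r^{-1}(U)$, namely that $r(w)(b)\ne 0$ for every $w\in W$ even when $w(b)<1$. This is where the unit-ideal hypothesis is actually used: writing $1=c_0b+\sum_i c_i a_i$, some term satisfies $w(c_ja_j)\ge 1\in c\Gamma_w$, and then $w(a_j)\le w(b)$ gives $w(bc_j)\ge w(a_jc_j)\ge 1$, so $r(w)(bc_j)\ne 0$ and hence $r(w)(b)\ne 0$. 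Your case analysis for the inequalities $r(w)(a_i)\le r(w)(b)$ is fine and is essentially what the paper glosses as ``since $w(a_i)\le w(b)$''; what is missing is this unit-ideal argument replacing the appeal to $1\le v(b)$.
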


\begin{proof}
Let $U$ be an open subset of $\X=Val(B,A)$. 
As the rational domains form a basis for the topology it is enough to consider the case when $U$ is a rational domain of $Val(B,A)$.
Let $a_1, \dots ,a_n , b \in B$  generating the unit ideal.
Set
\begin{displaymath}
U= \{ v \in Val(B,A) \mid v(a_i) \le v(b) \; i=1,\ldots,n \}=\X(\{a_1,\ldots ,a_n\} \slash b)
\end{displaymath} 
and
\begin{displaymath}
W= \{ v \in Spa(B,A) \mid v(a_i) \le v(b) \}=\R(\{a_1,\ldots ,a_n\} \slash b).
\end{displaymath} 
We claim that $r^{-1}(U)=W$. \\
Obviously $r^{-1}(U) \subset W$.
Let $w$ be a valuation in $W$ with value group $\Gamma$.
If $c\Gamma_{w}=\Gamma$ then $r(w)=w$, that is, $w \in Val(B,A)$ so $w \in W \bigcap Val(B,A) = U$.
If  $c\Gamma_{w} \subsetneqq \Gamma$ then $r(w)(a_i) \le r(w)(b)$ since $w(a_i) \le w(b)$.
It remains to show that $r(w)(b) \neq 0$.
If $r(w)(b) = 0$ then $w(b) < c\Gamma_{w}$ and so $w(a_i) < c\Gamma_{w}$ for every $i=1,\ldots,n$.
There are $c_0,c_1, \dots ,c_n  \in B$ such that $1=bc_0+a_1c_1+ \dots +a_nc_n$ and $w(1)=1 \in  c\Gamma_{w}$.
By convexity of $c\Gamma_{w}$ there is some $i$ such that $w(a_ic_i) \in c\Gamma_{w}$.
Thus $w(a_ic_i) \le w(bc_i) \in c\Gamma_{w}$, so $r(w)(bc_i) \neq 0$.
But since $r(w)(b) = 0$ we also get $r(w)(bc_i) = 0$, which is a contradiction.
We conclude that $r(w)(b) \neq 0$.
\end{proof}

\begin{cor} \label{Val is qcqs}
$Val(B,A)$ is quasi-compact and quasi-separated.
\end{cor}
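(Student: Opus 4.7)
The plan is to get quasi-compactness from the continuous retraction $r$ produced in Proposition~\ref{ret-cont} and quasi-separatedness from the structure of rational domains.

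First I would observe that the retraction $r:Spa(B,A)\to Val(B,A)$ is surjective (since $r(v)=v$ whenever $v\in Val(B,A)$) and, by Proposition~\ref{ret-cont}, continuous. Combined with Lemma~\ref{Spa is spectral}, which tells us that $Spa(B,A)$ is spectral, and in particular quasi-compact, this immediately yields quasi-compactness of $Val(B,A)$ as a continuous image of a quasi-compact space.

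Next I would address quasi-separatedness. Recall that a space is quasi-separated when the intersection of any two quasi-compact open subsets is again quasi-compact. By Proposition~\ref{base}, the rational domains form a basis for the topology of $\X=Val(B,A)$, and every rational domain $\X(\{a_1,\ldots,a_n\}/b)$ is itself of the form $Val\bigl(B_b,\varphi_b(A)[a_1/b,\ldots,a_n/b]\bigr)$. Hence, applying the first paragraph to this pair of rings, every rational domain in $\X$ is quasi-compact. Moreover, by Remark~\ref{inter. rational domains} (which applies equally well to the intersections taken inside $\X$, since $\X(\cdot/\cdot)=\R(\cdot/\cdot)\cap\X$), the intersection of two rational domains is again a rational domain.

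Finally I would assemble these pieces. Let $U,V\subset\X$ be any two quasi-compact open subsets. Since the rational domains form a basis of quasi-compact opens, $U$ can be written as a union of rational domains; by quasi-compactness, finitely many suffice, so $U=\bigcup_{i=1}^{n}U_i$ and likewise $V=\bigcup_{j=1}^{m}V_j$ with $U_i,V_j$ rational domains. Then
\[
U\cap V=\bigcup_{i,j}(U_i\cap V_j),
\]
and each $U_i\cap V_j$ is a rational domain, hence quasi-compact. A finite union of quasi-compact sets being quasi-compact, $U\cap V$ is quasi-compact. Since no genuine obstacle arises — everything reduces to the surjective continuous retraction and to the closure of the class of rational domains under finite intersection — the only point that requires a moment's care is verifying that every quasi-compact open is expressible as a finite union of basic opens, which is the standard consequence of having a basis of quasi-compact opens.
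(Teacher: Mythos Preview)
Your proof is correct and follows essentially the same route as the paper: quasi-compactness via the continuous surjective retraction from the spectral space $Spa(B,A)$, and quasi-separatedness via the fact that rational domains form a basis of quasi-compact opens closed under finite intersection. The only cosmetic difference is that the paper cites Proposition~\ref{base} (which in turn invokes Remark~\ref{inter. rational domains}) for the intersection property, whereas you cite the remark directly.
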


\begin{proof}
As $Spa(B,A)$ is a quasi-compact space by Lemma \ref{Spa is spectral} and the retraction $r: Spa(B,A) \to Val(B,A)$ is continuous, $Val(B,A)$ is quasi-compact.
Any rational domain can be viewed as $Val(B',A')$ for suitable rings $A' \subset B'$, hence any rational domain is quasi-compact.
As we saw in Proposition \ref{base}, the intersection of two rational domains is again a rational domain so in particular it is quasi-compact.
Since the rational domains form a basis of the topology, any quasi-compact open subset of $Val(B,A)$ can be viewed as a finite union of rational domains.
Now the intersection of any two quasi-compact open subsets of $Val(B,A)$
is also a finite union of rational domains, thus quasi-compact so $Val(B,A)$ is quasi-separated.
\end{proof}

\begin{example}[An Affine Scheme] \label{e1}

Consider $Val(B,B)$.

\noindent Let $v=(\p,R_{v},\Phi) \in Val(B,B)=\X$, then $v$ is an unbounded valuation on $B$ such that $v(B) \le 1$.
The only way this could be is if $v$ is a trivial valuation (i.e. $\Gamma = \{ 1 \}$).
Hence there is a $1-1$ correspondents between points of $Val(B,B)$ and prime ideals of $B$, that is points of $SpecB$.
As for the topology:
  \begin{multline*}
   \X(\{a_1,\ldots ,a_n\} \slash b)=Val\left( B_b,\varphi_b(B)\left[ \frac{a_1}{b},\ldots,\frac{a_n}{b}\right] \right) =\\=Val(B_b,B_b) \leftrightarrow SpecB_b = D(b)
  \end{multline*}
So there is a homeomorphism $Val(B,B) \simeq SpecB$.

\end{example}

For later use we define two canonical maps of topological spaces
\begin{displaymath}
\sigma: Spec B \to \X \text{ and } \tau: \X \to Spec A.
\end{displaymath}
For $\p \in Spec B$ we set $\sigma(\p)$ to be the trivial valuation on $k(\p)$, which is indeed in $\X=Val(B,A)$ since its valuation ring is $k(\p)$ and $B\otimes _{A} k(\p) \to k(\p)$ is indeed surjective.
For $v=(\p,R_{v},\Phi) \in \X$ we denote the maximal ideal of $R_v$ by $\m_v$ and set $\tau(v)=\Phi^{-1}(\m_v) \in SpecA$.

\begin{prop} \label{sigtau}
\begin{enumerate}
\item The composition $\tau \circ \sigma: SpecB \to SpecA$ is the morphism corresponding to the inclusion of rings $A \subset B$.
\item \label{sigtau sigma} $\sigma$ is continuous and injective.
\item $\tau$ is continuous and surjective.
\end{enumerate}
\end{prop}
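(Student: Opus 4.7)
My approach is the following. Parts (1) and (2), together with the continuity in (3), are direct unwindings of the definitions. For (1), I simply note that $\sigma(\p)$ has valuation ring $k(\p)$ with zero maximal ideal and structure map $\Phi\colon A\hookrightarrow B\twoheadrightarrow B/\p\hookrightarrow k(\p)$, so $\tau(\sigma(\p))=\Phi^{-1}(0)=\p\cap A$, which is precisely the morphism $Spec\,B\to Spec\,A$ induced by $A\subset B$. For (2), I would check continuity on the subbasic sets: $\sigma(\p)\in U_{a,b}$ iff the trivial valuation on $k(\p)$ satisfies $v(a)\le v(b)\ne 0$, and this trivial valuation takes $x$ to $1$ or $0$ according to whether $x\in\p$, so the condition is equivalent to $b\notin\p$. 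Hence $\sigma^{-1}(U_{a,b}\cap\X)=D(b)$. Injectivity of $\sigma$ is immediate since $\ker\sigma(\p)=\p$.

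For continuity of $\tau$, I would observe that $\tau(v)\in D(a)$ iff $\Phi(a)\notin\m_v$ iff $v(a)=1$, which (using $v(a)\le 1$ for every $A$-valuation) is equivalent to $v\in U_{1,a}\cap\X=\X(\{1\}/a)$, a rational domain, hence open.

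The only nontrivial point is surjectivity of $\tau$. My plan is a two-step construction: given $\q\in Spec\,A$, first produce $v\in Spa(B,A)$ with $\tau(v)=\q$, then apply the retraction $r\colon Spa(B,A)\to Val(B,A)$ of Proposition~\ref{ret-cont}. For the first step, the injectivity of $A\hookrightarrow B$ forces $S^{-1}B\ne 0$ for $S=A\setminus\q$, so there exists some $\p\in Spec\,B$ with $\p\cap A\subset\q$. Inside $k(\p)$, the local subring $(A/(\p\cap A))_{\bar\q}$ (where $\bar\q$ denotes the image of $\q$) is then dominated by some valuation ring $V\subset k(\p)$ by the standard existence theorem for dominating valuation rings, and the resulting triple $(\p,V,\Phi)\in Spa(B,A)$ satisfies $\tau(v)=\q$ by construction of $V$.

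The subtlety I anticipate is verifying that $\tau$ is preserved by the retraction, i.e.\ $\tau(r(v))=\tau(v)$ for every $v\in Spa(B,A)$. I would check this directly on elements of $A$: since $v(a)\le 1$ and $1\in c\Gamma_v$, the value $v(a)=1$ is preserved under the minimal primary specialization, while any $v(a)<1$ becomes either $v(a)$ or $0$ under $r$, both again $<1$. Hence $\tau(r(v))=\{a\in A\mid v(a)<1\}=\tau(v)$, which gives $\tau(r(v))=\q$ and completes the proof of surjectivity.
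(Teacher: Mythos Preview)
Your proof is correct. Parts (1), (2), and the continuity of $\tau$ in (3) are argued essentially as in the paper; the only cosmetic difference is that you check continuity of $\sigma$ on the subbasic sets $U_{a,b}\cap\X$ rather than on rational domains, which is of course sufficient.

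The surjectivity of $\tau$ is where you take a genuinely different route. The paper proceeds in two stages: first it treats maximal ideals $\q\subset A$ by choosing $\p$ maximal among primes of $B$ satisfying $\p\cap A\subset\q$, builds $v=(\p,R_v,\Phi)\in Spa(B,A)$ with $\tau(v)=\q$, and then uses the maximality of both $\p$ and $\q$ to argue that in fact $r(v)=v$, so $v$ already lies in $Val(B,A)$. For a general prime $\q$ the paper localizes at $\q$, reduces to the maximal case in $A_\q\subset B_\q$, and pushes the resulting valuation back via Lemma~\ref{val to val}. Your argument is more direct: you take any $\p$ with $\p\cap A\subset\q$, produce $v\in Spa(B,A)$ with $\tau(v)=\q$ by the standard domination theorem, and then observe that the retraction preserves $\tau$, since for $a\in A$ one has $v(a)=1$ iff $r(v)(a)=1$ (the value $1$ lies in $c\Gamma_v$, and any value strictly below $1$ stays strictly below $1$ or drops to $0$). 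This single observation replaces both the careful choice of $\p$ and the localization step, and handles all primes $\q$ uniformly. The paper's approach has the side benefit of exhibiting, for maximal $\q$, an explicit preimage already in $Val(B,A)$ without invoking the retraction, but your argument is shorter and conceptually cleaner.
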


\begin{proof}
$(1)$ Given a prime $\p$ in $B$, $\sigma(\p)=(\p,k(\p),\Phi)$.
The maximal ideal of the valuation ring $k(\p)$ is the zero ideal, so
\begin{displaymath}
\tau(\sigma(\p))=\ker \Phi = \ker \left(  A \to B \to k(\p) \right) = \p \cap A.
\end{displaymath}

$(2)$ Let $U=\X(\{a_1,\ldots ,a_n\} \slash b)$ be a rational domain in $\X$ and $D(b)$ a basic open set in $Spec B$.
For any $\p \in D(b)$ we have $\sigma(\p)(b)=1$ and $\sigma(\p)(a_i)=0 \text{ or }1$, so $\sigma(D(b)) \subset \X(\{a_1,\ldots ,a_n\} \slash b)$.

Conversely if $v \in \X(\{a_1,\ldots ,a_n\} \slash b)$ and there is $\p \in Spec B$ that maps to $v$ then we must have $\ker(v)=\p$ and $v$ is trivial on $k(\p)$.
Hence $\sigma$ is injective and $\sigma^{-1}(\X(\{a_1,\ldots ,a_n\} \slash b)) = D(b)$.

$(3)$ For a basic open set $D(a) \subset Spec A$ and $v=(\p,R_{v},\Phi) \in \X$, then  $\tau(v) \in D(a)$ is the same as $\Phi(a) \notin \m_v$ or equivalently $v(a) \ge 1$.
In this case $v(a)=1$ (since $v(A) \le 1$) and 
\begin{displaymath}
v \in \X(\{1\} \slash a) = \{ w \in Val(B,A) \mid w(a)=1 \}
\end{displaymath}
so  $\tau^{-1}(D(a))=\X(\{1\} \slash a)$.

As for surjectivety, first consider a maximal ideal $\q \in SpecA$.
Since the morphism $SpecB \to SpecA$ is schematically dominant there is $\p'' \in SpecB$ such that $A \cap \p'' \subset \q$. 
Take $\p$ to be a maximal prime of $B$ with this property.
The image $i(A)$ of $A$ under $i:B \to k(\p)$ is a subring of $k(\p)$.
The extended ideal $i(\q)i(A)$ is a proper ideal, since if $1 \in i(\q)i(A)$ then there are $a_1, \ldots, a_n \in \q$ and  $b_1, \ldots, b_n \in A$ such that $1-\sum_i a_ib_i \in ker (i)=\p$.
As $1-\sum_i a_ib_i \in A$ we have $1-\sum_i a_ib_i \in A \cap \p \subset \q $, but since $\sum_i a_ib_i \in \q$ we get that $1 \in \q$ which is a contradiction.
By \cite[VI \S4.4]{ZSII}  there is a valuation ring $R_v$ of $k(\p)$ containing $i(A)$ such that its maximal ideal $\m_v$ contains $i(\q)i(A)$.
This gives us a valuation $v=(\p,R_v,\Phi=i|_{A}) \in Spa(B,A)$.
BY the retraction we obtain a valuation $v'=r(v) \in Val(B,A)$ with kernel $\p' \supset \p$.
If $\p \ne \p'$ then $\q$ is strictly contained in $A \cap \p'$, by the choice of $\p$.
Since $\q$ is a maximal ideal we have $1 \in A \cap \p'$ which is a contradiction.
Hence $\p' = \p$ and $v' = v$ that is $v=(\p,R_v,\Phi) \in Val(B,A)$.
Now $\tau(v) = \Phi^{-1}(\m_v) \supset \q$ but since $\q$ is maximal we have
$\tau(v) =  \q$.

Now, let $\q \in SpecA$ be some prime.
Then $\q A_{ \q }$ is the maximal ideal of $A_{ \q }$, and since $A_{ \q }$ is flat over $A$, we have $A_{ \q } \subset B_{ \q } = B \otimes_A A_{ \q }$.
By the previous case there is a valuation $v \in Val(B_{ \q },A_{ \q })$ that $\tau:Val(B_{ \q },A_{ \q} ) \to SpecA_{ \q }$ maps to $ \q A_{ \q } $.
The canonical homomorphisms
\begin{displaymath}
	\xymatrix{
	B \ar[r] & B_{\q} \\
	A \ar[u] \ar[r] & A_{\q} \ar[u]
	}
\end{displaymath}
induces by Lemma \ref{val to val} a morphism $Val(B_{ \q },A_{ \q} ) \to \X$.
As $ \q A_{ \q } \in Spec A_{\q} $ is pulled back to $\q \in SpecA $, the image of $v$ in $Val(B,A)$ is mapped by $\tau$ to  $\q$.
\end{proof}

\begin{remark} \label{tau2}
From the proof we see that for any $a \in A$ we have 
\begin{displaymath}
\tau^{-1}(D(a))=Val(B_a,A_a)=\X(\{1\} \slash a),
\end{displaymath}
and for every rational domain $\X(\{a_1,\ldots ,a_n\} \slash b) \subset \X$ we have
\begin{displaymath}
\sigma^{-1}(\X(\{a_1,\ldots ,a_n\} \slash b)) = D(b)
\end{displaymath}
\end{remark}

\begin{remark} \label{tau}
A point $v=(\p,R_{v},\Phi) \in \X$ is (as said before) a diagram
\begin{displaymath}
	\xymatrix{
	B \ar[r] & k(\p) \\
	A \ar[u] \ar[r]^{\Phi} & R_v. \ar[u]
	}
\end{displaymath}
Also there is a unique semi-valuation ring $S_v$ associated to the point (namely the pull-back of $R_v$ to $B_{\p}$), and the diagram factors through the pair $S_v \subset B_{\p}$ i.e.
\begin{displaymath}
	\xymatrix{
	B \ar[r] & B_{\p} \ar[r] & k(\p) \\
	A \ar[u] \ar[r] & S_v \ar[u] \ar[r] & R_v. \ar[u]
	}
\end{displaymath}
Since the maximal ideal of the valuation ring $R_v$ is pulled back to the maximal ideal of the semi-valuation ring $S_v$, we may rephrase the definition of $\tau$ as the pull-back of the maximal ideal of the semi-valuation ring $S_v$ to $A$.
Equivalently we can say that $\tau(v)$ is the image of the unique closed point of $SpecS_v$ under the map $SpecS_v \to SpecA$.
\end{remark}

\subsection{Rational Covering}

Any open cover of $\X$ can be refined to a cover of $\X$ consisting of rational domains, since the rational domains form a basis for the topology.
Furthermore there is a finite sub-cover of $\X$ consisting of rational domains, as $\X$ is quasi-compact.
Next we show that we can always refine this cover to a rational covering, that is there are elements $T=\{ a_1, \ldots ,a_N \} \subset B$ generating the unit ideal such that the rational domains $\{ \X( T \slash a_j) \}_{ 1 \le j \le N}$ form a refinement of the finite sub-cover.

\begin{prop}
Any finite open cover of $Val(B,A)$ consisting of rational domains can be refined to a rational covering.
\end{prop}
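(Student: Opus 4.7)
The plan is to construct an explicit rational covering refining the given cover. Write the cover as $\{U_\ell = \X(S_\ell/b_\ell)\}_{\ell=1}^{k}$ with $S_\ell = \{a_{\ell,1}, \ldots, a_{\ell,n_\ell}\}$, and set $T_\ell^+ = S_\ell \cup \{b_\ell\}$. A key preliminary observation is that $(b_1, \ldots, b_k) = B$: otherwise some prime $\p$ would contain every $b_\ell$, and the trivial valuation $\sigma(\p) \in \X$ produced in Proposition \ref{sigtau} would satisfy $\sigma(\p)(b_\ell) = 0$ for every $\ell$, putting it in none of the $U_\ell$ and contradicting that $\{U_\ell\}$ is a cover.

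Next, I would define the candidate generating set
\begin{displaymath}
T = \Big\{ \prod_{\ell=1}^k s_\ell \;:\; s_\ell \in T_\ell^+ \text{ for each } \ell \text{ and } s_{\ell_0} = b_{\ell_0} \text{ for some } \ell_0 \Big\},
\end{displaymath}
consisting of $k$-fold products of which at least one factor is some $b_\ell$. For each fixed $\ell_0$, multiplying the $k-1$ identities $1 \in (T_\ell^+)$ for $\ell \ne \ell_0$ shows that $\prod_{\ell \ne \ell_0} T_\ell^+$ already generates the unit ideal of $B$, so the sub-collection of $T$ with $s_{\ell_0} = b_{\ell_0}$ generates the ideal $(b_{\ell_0})$. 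Taking the union over $\ell_0$ and invoking the preliminary observation gives that $T$ generates $(b_1, \ldots, b_k) = B$, so $\{\X(T/c)\}_{c \in T}$ is a rational covering of $\X$.

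The crux is to show this is a refinement, i.e., that for each $c = b_{\ell_0}\cdot \prod_{\ell \ne \ell_0} s_\ell \in T$ one has $\X(T/c) \subset U_{\ell_0}$. I would take $v \in \X(T/c)$; since $\{U_\ell\}$ is a cover, $v \in U_{\ell_1}$ for some $\ell_1$. The case $\ell_1 = \ell_0$ is trivial. If $\ell_1 \ne \ell_0$ but $s_{\ell_1} = b_{\ell_1}$ already, then for any $a \in T_{\ell_0}$ the product $c' := a \cdot \prod_{\ell \ne \ell_0} s_\ell$ lies in $T$ (it still contains $b_{\ell_1}$), and $v(c') \le v(c)$ simplifies after cancellation to $v(a) \le v(b_{\ell_0})$. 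Otherwise $s_{\ell_1} \in S_{\ell_1}$; comparing $c$ with $c^* := b_{\ell_1} \cdot \prod_{\ell \ne \ell_1} s_\ell \in T$ gives $v(b_{\ell_1}) \le v(s_{\ell_1})$, while $v \in U_{\ell_1}$ gives the reverse, so $v(b_{\ell_1}) = v(s_{\ell_1})$; then for $a \in T_{\ell_0}$, the product $c'' := a \cdot b_{\ell_1} \cdot \prod_{\ell \ne \ell_0, \ell_1} s_\ell$ is also in $T$, and $v(c'') \le v(c)$ together with this equality yields $v(a) \le v(b_{\ell_0})$. Hence $v \in U_{\ell_0}$ in every case.

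The main obstacle is that the naive choice $T = \prod_\ell T_\ell^+$ fails: monomials $c$ with no $b_\ell$ factor give rational domains $\X(T/c)$ that may contain points from several different $U_\ell$ without being contained in any single one. Restricting $T$ to products containing at least one $b$-factor fixes this, but the refinement argument remains subtle precisely when the $b$-factor of $c$ sits at a position $\ell_0$ different from the index $\ell_1$ of the $U_{\ell_1}$ containing $v$; it is this case that forces the two-step transfer through $c^*$ and $c''$ and that uses the cover hypothesis essentially.
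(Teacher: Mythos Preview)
Your proposal is correct and takes essentially the same approach as the paper: both use as generating set the products $\prod_\ell s_\ell$ with $s_\ell \in T_\ell^+$ subject to the constraint that at least one factor is a denominator $b_{\ell_0}$ (the paper encodes this via the index set $I' = \{(r_1,\ldots,r_N)\in I : r_i = 1 \text{ for some } i\}$), and both prove the refinement by picking an index $\ell_1$ with $v\in U_{\ell_1}$ and swapping the $\ell_1$-factor for $b_{\ell_1}$. Your argument is organized slightly differently---the paper proves the stronger equality $V_\alpha=\X(\{a_\beta\}_{\beta\in I'}/a_\alpha)$ and reads off the containment, while you prove $\X(T/c)\subset U_{\ell_0}$ directly with a case split---but the substance is identical; in fact your preliminary observation that $(b_1,\ldots,b_k)=B$ and your deduction that $T$ generates the unit ideal supply a detail that the paper simply asserts.
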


\begin{proof}
Let $\{ U_i\}_{i =1}^{N}$ be a finite open cover of $\X = Val(B,A)$  consisting of rational domains, i.e. for every $i=1,\ldots,N$ we have $a_1^ {(i)}, \ldots  ,a_{n_i}^ {(i)}\in B $ generating the unit ideal and 
\begin{displaymath}
U_i = \X( \{a_j^{ {(i)} }\}_{1\le j \le n_i} \slash a_1^{(i)} )=\{ v \mid v(a_j^ {(i)}) \le v(a_1^{(i)}) \quad j=1,\ldots,n_i \}.
\end{displaymath}

For every $1 \le k \le n_i $ denote $V_{i,k} = \X( \{a_j^{ {(i)} }\}_{1\le j \le n_i} \slash a_k ^{(i)} )$.
Note that $V_{i,1}=U_i$ and $\X = \cup_{k=1} ^{n_i} V_{i,k}$ for each $i=1,\ldots,N$.

\noindent Set 
\begin{displaymath}
I = \{ (r_1,\ldots,r_N) \in \mathbb{N}^N \: | \: 1\le r_i \le n_i \; i=1,\ldots ,N \}
\end{displaymath}
and
\begin{displaymath}
I' = \{ (r_1,\ldots,r_N) \in I  \: | \: r_i=1 \: \text{for some }\: i  \}.
\end{displaymath} 

For $(r_1,\ldots,r_N) \in I$ we denote 
\begin{displaymath}
V_{(r_1,\ldots,r_N)} = \cap_{1\le i \le N} V_{i,r_i} \text{ and } a_{(r_1,\ldots,r_N)} = a_{r_1}^ {(1)} \cdot a_{r_2}^ {(2)} \cdot \ldots \cdot a_{r_N}^ {(N)}.
\end{displaymath}

\noindent Note that $V_{(r_1,\ldots,r_N)} = \{ v \in \X \: | \: v(a_{\alpha}) \le v(a_{(r_1,\ldots,r_N)}) \; \forall \alpha \in I \}$. \\
We claim that for any $(r_1,\ldots,r_N) \in I'$ we have
\begin{displaymath}
V_{(r_1,\ldots,r_N)} = \{ v \in \X \: | \: v(a_{\alpha}) \le v(a_{(r_1,\ldots,r_N)}) \; \forall \alpha \in I' \}.
\end{displaymath} 
Given $(r_1,\ldots,r_N) \in I'$, by definition we have
\begin{displaymath}
V_{(r_1,\ldots,r_N)} \subset \{ v \in \X \: | \: v(a_{\alpha}) \le v(a_{(r_1,\ldots,r_N)}) \; \forall \alpha \in I' \}.
\end{displaymath} 
Conversely, if $w \in \{ v \in \X \: | \: v(a_{\alpha}) \le v(a_{(r_1,\ldots,r_N)}) \; \forall \alpha \in I' \}$, then $w \in U_{i_0}$ for some $i_0$ since $\{ U_i\}_{i =1}^{N}$ is a cover of $\X$.
For simplicity we assume that $i_0=1$, so $w(a_k ^{(1)})\le w(a_1 ^{(1)})$ for every $1 \le k \le n_1 $. \\
Now for any $(j_1,\ldots,j_N) \in I - I'$ we have 
\begin{multline*}
w(a_{(j_1,\ldots,j_N)}) = w(a_{j_1}^ {(1)} \cdot a_{j_2}^ {(2)} \cdot \ldots \cdot a_{j_N}^ {(N)}) \le \\ \le w(a_{1}^ {(1)} \cdot a_{j_2}^ {(2)} \cdot \ldots \cdot a_{j_N}^ {(N)})= w(a_{(1,j_2,\ldots,j_N)}).
\end{multline*}
As $(1,j_2,\ldots ,j_N) \in I'$ by assumption we have $w(a_{(1,j_2,\ldots,j_N)}) \le w(a_{(r_1,\ldots,r_N)})$.
It follows that
\begin{displaymath}
w(a_{(j_1,\ldots,j_N)}) \le w(a_{(1,j_2,\ldots,j_N)}) \le w(a_{(r_1,\ldots,r_N)}),
\end{displaymath} 
hence $w \in V_{(r_1,\ldots,r_N)}$.

Finally note that \begin{itemize}
 \item $ \{ a_{\alpha} \}_{\alpha \in I'} $ generate the unit ideal of  $B$.
 \item $V_{\alpha}= \X(\{ a_{\beta}  \} _{\beta \in I'} \slash a_{\alpha} )$ for every $\alpha \in I'$.
 \item $\X = \cup_{\alpha \in I'}V_{\alpha} $.
 \item for $(r_1,\ldots,r_N) \in I'$ if $r_i=1$ then $V_{(r_1,\ldots,r_N)} \subset U_i$.
 \end{itemize}
\end{proof}

\subsection{Sheaves on \textit{Val(B,A)}}

\subsubsection{$\M_{\X}$ and $\OO_{\X}$} 
\label{sigma}

We now define two sheaves on $\X = Val(B,A)$, both making $\X$ a locally ringed space.

\begin{notation}
\begin{itemize}
	\item For a pair of rings $C \subset D$ we denote the integral closure of $C$ in $D$ by $Nor_{D}C$.
	\item For quasi-compact quasi-separated schemes $Y,X$ and an affine morphism $f:Y \to X$, we denote the integral closure of $\OO_X$ in $f_{*}\OO_Y$ by $Nor_{f_{*}\OO_Y}\left( \OO_X\right) $ or $Nor_{Y}\left( \OO_X\right) $.
	\item In the situation above we denote $Nor_{Y}X=\mathbf{Spec}_X(Nor_{Y}\OO_X)$ and the canonical morphism $Nor_YX \to X$ by $ \nu $.
\end{itemize}
\end{notation}

\begin{lemma} \label{nor-bir}
Let $A \subset B$ be rings.
Denote $X=SpecA$ and $Y=SpecB$.
Then $\X=Val(B,A)=Val(B,Nor_BA)$ and the canonical map $\tau:\X \to X$ factors through $Nor_YX$.
\end{lemma}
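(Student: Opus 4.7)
The plan is to prove the two assertions separately but with the same underlying input, namely the standard integrality argument in valuation theory.

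For the equality $Val(B,A)=Val(B,Nor_{B}A)$, note first that unboundedness is a property of the valuation $v$ on $B$ alone and does not refer to the subring; moreover, both $Val(B,A)$ and $Val(B,Nor_{B}A)$ are endowed with the subspace topology from $Spv(B)$, whose sub-basis $\{U_{a,b}\}_{a,b\in B}$ also only refers to $B$. Hence it suffices to prove the equality of underlying sets, i.e. that a valuation $v$ on $B$ satisfies $v(a)\le 1$ for every $a\in A$ if and only if it satisfies $v(c)\le 1$ for every $c\in Nor_{B}A$. One direction is immediate since $A\subset Nor_{B}A$. For the other, let $v$ be an $A$-valuation on $B$ and let $c\in Nor_{B}A$, so that $c$ satisfies an equation $c^{n}+a_{n-1}c^{n-1}+\cdots+a_{0}=0$ with $a_{i}\in A$. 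If $v(c)>1$ then $v(c^{n})>v(a_{i}c^{i})$ for every $i<n$ (using $v(a_{i})\le 1$), so the non-Archimedean inequality applied to the relation $c^{n}=-(a_{n-1}c^{n-1}+\cdots+a_{0})$ yields $v(c^{n})\le\max_{i<n}v(a_{i}c^{i})<v(c^{n})$, a contradiction.

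For the factorization of $\tau$, recall that a point $v=(\p,R_{v},\Phi)\in\X$ is the datum of the diagram
\[
\xymatrix{
B \ar[r] & k(\p) \\
A \ar[u] \ar[r]^{\Phi} & R_{v} \ar@{^{(}->}[u]
}
\]
By the equality just established, $v$ is equally a $Nor_{B}A$-valuation on $B$, so the map $Nor_{B}A\to k(\p)$ lands in $R_{v}$ and gives a canonical extension $\Phi':Nor_{B}A\to R_{v}$ of $\Phi$. Setting $\tau'(v):=(\Phi')^{-1}(\m_{v})\in\Spec(Nor_{B}A)$ defines a map of sets $\tau':\X\to Nor_{Y}X$. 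Because $\Phi=\Phi'|_{A}$, we have $(\Phi')^{-1}(\m_{v})\cap A=\Phi^{-1}(\m_{v})$, which is exactly the statement $\nu\circ\tau'=\tau$, i.e. the desired factorization on the level of underlying sets.

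Finally, continuity of $\tau'$ is immediate: under the identification $\X=Val(B,A)=Val(B,Nor_{B}A)$, the map $\tau'$ is nothing other than the structural map $\tau$ of the pair $(B,Nor_{B}A)$ into $\Spec(Nor_{B}A)$, whose continuity was already established in Proposition \ref{sigtau}. There is no real obstacle here; the only point requiring care is the integral-dependence computation in the first paragraph, and in particular keeping track of the fact that $\tau'$ is well defined because $\Phi'$ is forced upon us by the $Nor_{B}A$-valuation structure and coincides with $\Phi$ on $A$.
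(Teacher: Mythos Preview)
Your proof is correct and takes a genuinely more elementary route than the paper's. The paper argues geometrically: since $\nu:Nor_{Y}X\to X$ is integral, hence universally closed and separated, the valuative criterion produces for each $v=(\p,R_{v},\Phi)\in\X$ a unique lift $\Phi':Spec\,R_{v}\to Nor_{Y}X$, yielding a point of $Spa(B,Nor_{B}A)$; membership in $Val$ is then checked via the closed-immersion criterion of Remark~\ref{kkk}. You instead work directly at the level of valuations: the classical observation that an integral relation $c^{n}+a_{n-1}c^{n-1}+\cdots+a_{0}=0$ with $v(a_{i})\le 1$ forces $v(c)\le 1$ immediately gives $Spa(B,A)=Spa(B,Nor_{B}A)$ as subsets of $Spv(B)$, and unboundedness is manifestly intrinsic to $v$, so $Val$ agrees as well. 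Your approach is shorter and avoids invoking the valuative criterion and Remark~\ref{kkk}; the paper's approach, on the other hand, packages the argument in scheme-theoretic language that matches the later global developments (and makes the factorization of $\tau$ appear as an immediate byproduct of the lift $\Phi'$ rather than requiring a separate appeal to Proposition~\ref{sigtau}). Both are valid; yours is arguably the more transparent one in the affine setting.
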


\begin{proof}
The canonical morphism 
\begin{displaymath}
\nu: Nor_YX \to X
\end{displaymath} 
is an integral, hence universally closed and separated.
Thus for any valuation $v=(\p,R_{v},\Phi) \in \X$ we obtain a diagram  by  the valuative criterion (abusing notation and denoting by $\Phi$ both $A \to R_v$ and the induced morphism $SpecR_v \to SpecA$) 
  \begin{displaymath} \xymatrix {
        {Spec \:k(\p)} \ar[r] \ar [d]       & Y \ar[r] & Nor_YX \ar [d]^{\nu}       \\
        Spec \:R_v \ar[rr]^{\Phi} \ar@{-->} [urr]^{\exists !\Phi'} &        & X.
         }
\end{displaymath}
That is, we obtain a unique $(\p,R_{v},\Phi') \in Spa(B,Nor_BA)$.
As $v \in \X$ the morphism $Spec\, k(\p) \to Y \times Spec R_{v}$
is a closed immersion by Remark \ref{kkk}.
It follows, again form Remark \ref{kkk}, that $(\p,R_{v},\Phi') \in Val(B,Nor_BA)$.
As the lifting of $\Phi$ is unique we have $\X=Val(B,Nor_BA)$.\\
It is clear that the diagram of topological spaces 
\begin{displaymath}
\xymatrix{
Val(B,Nor_BA) \ar@{=}[d] \ar[r]^-{\tau} &  Nor_YX \ar[d]^{\nu} \\
Val(B,A) \ar[r]^-{\tau} & X
}
\end{displaymath}
commutes.
\end{proof}

As the rational domains form a basis for the topology of $\X$ it is enough to define the sheaves only over the rational domains. Let $U=\X(\{a_1,\ldots ,a_n\} \slash b)=Val(B',A')$ where, as before,  $B'=B_b$ and $A'=\varphi_b(A)\left[ \frac{a_1}{b},\ldots,\frac{a_n}{b}\right] $. We define two presheaves $\M_{\X}$ and $\OO_{\X}$ on the rational domains of $\X$ by the rules 
\begin{equation*}
U \mapsto \M_{\X}(U)=B' \qquad \qquad U \mapsto \OO_{\X}(U)=Nor_{B'}A'.
\end{equation*}
Clearly  $\OO_{\X}(U)\subset \M_{\X}(U)$.

\begin{theorem} \label{sheaves}
With the above notation, the presheaves $\M_{\X}$ and $\OO_{\X}$ are sheaves on the rational domains of $\X$.
\end{theorem}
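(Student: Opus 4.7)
The plan is to first reduce the sheaf axioms to the case of rational coverings, then treat $\M_{\X}$ by invoking the structure sheaf of an affine scheme, and finally treat $\OO_{\X}$ via a valuative characterization of integral closure.

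First, since the rational domains form a basis for the topology (Proposition \ref{base}) and $\X$ is quasi-compact (Corollary \ref{Val is qcqs}), any open cover of a rational domain admits a finite refinement by rational domains, which by the previous proposition may be refined further to a rational covering. Thus it suffices to verify the separation and gluing axioms for both presheaves against rational coverings. Fix therefore $U = \X(\{a_i\}/b) = Val(B',A')$ and a rational covering $\{U_j\}_{j=1}^{N}$ of $U$ given by $f_1,\ldots,f_N \in B'$ generating the unit ideal of $B'$, with $U_j = Val(B'_{f_j},A'_j)$ where $A'_j = \varphi_{f_j}(A')[f_1/f_j,\ldots,f_N/f_j]$.

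For $\M_{\X}$, the intersection formula of Remark \ref{inter. rational domains} gives $\M_{\X}(U_j \cap U_k) = B'_{f_jf_k}$, so the sheaf axiom asserts exactly that $B' \to \prod_j B'_{f_j} \rightrightarrows \prod_{j,k} B'_{f_jf_k}$ is an equalizer, which is the standard sheaf property of $\OO_{\mathrm{Spec}\,B'}$ for the distinguished open cover $\{D(f_j)\}_{j=1}^N$ of $\mathrm{Spec}\,B'$.

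For $\OO_{\X}$, the heart of the matter is the valuative characterization
\[
Nor_{B'}A' \;=\; \{\, s \in B' \mid v(s) \le 1 \text{ for every } v \in Val(B',A')\,\}.
\]
One direction is elementary: if $s^n + a_{n-1}s^{n-1} + \cdots + a_0 = 0$ with $a_i \in A'$ and $v(s) > 1$ for some $v \in Val(B',A')$, then $v(a_i s^i) \le v(s)^i < v(s)^n$ for $i<n$, so the ultrametric inequality forces the contradiction $v(s)^n = v(-a_{n-1}s^{n-1}-\cdots - a_0) < v(s)^n$. For the converse, if $s \in B'$ is not integral over $A'$, one produces a valuation in $Spa(B',A')$ with $v(s)>1$ by a standard domination argument in valuation theory (cf.\ \cite[Ch.~VI, \S4.4]{ZSII} as used in the proof of Proposition \ref{sigtau}.(3)): a prime $\p \subset B'$ maximal among those avoiding $s$ gives a residue field $k(\p)$ containing the image of $A'$ in which $s^{-1}$ generates a proper ideal of $A'[\bar s^{-1}]$, so there is a valuation ring $R$ of $k(\p)$ with $A' \hookrightarrow R$ and $\bar s^{-1} \in \m_R$; the retract $r$ of Proposition \ref{ret-cont} applied to this valuation produces $v \in Val(B',A')$ with $v(s) > 1$. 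Granted the characterization, the sheaf axiom for $\OO_{\X}$ follows immediately from that of $\M_{\X}$: a compatible family $(s_j) \in \prod \OO_{\X}(U_j)$ glues to some $s \in \M_{\X}(U) = B'$, and for each $v \in U$ one has $v \in U_j$ for some $j$, so applying the characterization in $Val(B'_{f_j},A'_j)$ gives $v(s) \le 1$; since this holds for all $v \in U$, the characterization applied to $U$ itself yields $s \in Nor_{B'}A' = \OO_{\X}(U)$.

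The main obstacle is the nontrivial direction of the valuative characterization, namely constructing an $A'$-valuation on $B'$ witnessing the non-integrality of a given element; everything else is a routine assembly of the affine-scheme sheaf property, the intersection formula for rational domains, and the locality of the condition $v(s) \le 1$.
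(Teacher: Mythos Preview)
Your treatment of $\M_{\X}$ matches the paper's: both identify it with $\sigma_*\OO_{\mathrm{Spec}\,B}$ and invoke the affine sheaf axiom. For $\OO_{\X}$ you take a genuinely different route. The paper constructs from the rational covering $b_1,\ldots,b_r$ the auxiliary scheme $X'=\mathrm{Proj}\bigl(\bigoplus_{d\ge0} E^d\bigr)$ with $E=\sum Ab_i$, checks that its affine charts are precisely the $\mathrm{Spec}\,A_i$, normalizes in $Y$, and reads off the gluing from the structure sheaf of $Nor_YX'$. You instead use the valuative description $Nor_{B'}A'=\{s\in B':v(s)\le 1\ \text{for all}\ v\in Val(B',A')\}$ and the locality of the condition $v(s)\le1$. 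Your route is cleaner and more elementary once the criterion is in hand; the paper's has the side benefit of previewing the relative blow-up machinery of Section~4.

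Your sketch of the hard direction of the criterion, however, has a real gap. Choosing $\p$ ``maximal among those avoiding $s$'' does not force $\bar s^{-1}$ to be a non-unit in the image of $A'[\bar s^{-1}]$ inside $k(\p)$: for that you need $\bar s$ to remain non-integral over the image of $A'$, and this can fail for every such $\p$ even though $s$ itself is not integral. For instance take $A'=\mathbb{Q}$ diagonally in $B'=\prod_{n\ge1}\mathbb{Q}$ and $s=(1,2,3,\ldots)$; then $s$ is a unit (so every prime avoids it) and not integral over $\mathbb{Q}$, yet modulo any principal maximal ideal $\bar s\in\mathbb{Q}$. A construction that does work proceeds inside $B'_s$: let $D$ be the subring generated by the image of $A'$ and $s^{-1}$; then $s^{-1}$ is a non-unit in $D$ (otherwise one extracts a monic relation for $s$), hence lies in a maximal ideal $\m$ of $D$. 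Pick a minimal prime $\q\subset\m$ of $D$, lift $\q$ to a prime $\p$ of $B'_s$ (possible since $D\hookrightarrow B'_s$, so $(B'_s)_\q\ne0$), dominate $(D/\q)_{\m}$ by a valuation ring of $\mathrm{Frac}(D/\q)$ via Zariski--Samuel, and extend to $k(\p)$. You should also note explicitly why the retraction $r$ preserves $v(s)>1$: since $v(s)>1$ implies $v(s)\in c\Gamma_v$, the minimal primary specialization leaves this value unchanged. With these repairs your argument goes through.
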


\begin{proof}

Denote $Y=SpecB$.
We know that the sets $\{ D(b)\}_{b\in B} $ form a base for the topology of $Y$.
Recall that we defined a map $\sigma:Y \to \X$ such that for every rational domain $\X\left( \{a_1,\ldots ,a_n\} \slash b\right)  \subset \X $ we have $\sigma^{-1}\left( \X\left( \{a_1,\ldots ,a_n\} \slash b\right) \right)  = D(b)$ (Remark \ref{tau2}).
Now, by the definition of $\M_{\X}$, for every rational domain $U \subset \X$ we have an isomorphism of rings
\begin{displaymath}
B_b=\M_{\X}(U) \simeq \sigma_{*}\OO_{Spec B}(U)=\OO_{Spec B}(D(b))=B_b.
\end{displaymath} 
Let $V \subset U \subset \X$ be two rational domains.
Suppose $U=\X\left( \{a_1,\ldots ,a_n\} \slash b\right) $ and $V=\X\left( \{f_1,\ldots ,f_m\} \slash g\right) $.
Then $D(g)=\sigma^{-1}(V)$ and $D(b)=\sigma^{-1}(U)$.
For any $\p \in D(g)$ we have $\sigma(\p) \in V \subset U$, hence $\p \in D(b)$.
In other words we have a diagram
\begin{displaymath}
  \xymatrix {
        D(g) \ar[r] \ar [d]  & D(b) \ar[r] \ar[d] & Y \ar[d]^{\sigma}       \\
        V \ar[r]   &   U \ar[r]     & \X.
         }
\end{displaymath}
From the diagram we see that the restrictions of $\M_{\X}$ commute with the restrictions of $\OO_{SpecB}$.
We conclude that we have an isomorphism of presheaves between $\M_{\X}$ and $\sigma_{*}\OO_{Spec B}$ as presheaves on the rational domains.
Since $\OO_{Spec B}$ is a sheaf on $Y$, its restriction to a base of the topology of $Y$ is also a sheaf.
It follows that $\M_{\X}$ is a sheaf on the rational domains of $\X$.

Let $U$ be a rational domain in $\X$ and $\{V_i\}$ an open covering of $U$ consisting of rational domains.
Let $s_i \in \OO_{\X}(V_i)$ be sections satisfying $s_i|_{V_i\cap V_j} = s_j|_{V_i \cap V_j}$ for every pair $i,j$.
We already know that $\M_{\X}$ is a sheaf so there is a unique element $s \in \M_{\X}(U)$ such that $s|_{V_i} = s_i$ for each $i$.
We want to show that $s$ is in $\OO_{\X}(U)$.
We may assume that $U=\X$ and that $\{V_i\}$ is a rational covering corresponding to $b_1, \ldots , b_r$, that is $b_1, \ldots , b_r \in B$, none of which are nilpotent, generating the unit ideal of $B$ and $V_i=\X\left( \{b_j\}_{j}\slash b_i\right) $.

We denote $Y=SpecB$ , $X=SpecA$ , $B_i=B_{b_i}$ , $A_i=\varphi_i(A)\left[ \{\frac{b_j}{b_i}\}_{j}\right] $ (where $\varphi_i$ is the canonical homomorphism $B\to B_i$), $Y_i=SpecB_i$ and $X'_i=SpecA_i$.
Then $V_i=Val(B_i,A_i)$.

Now, $E=\sum_iAb_i$ is a finite $A$-module contained in $B$.
Using the multiplication in $B$, we define $E^d$ as the image of $E^{\otimes d}$ under the map $B^{\otimes d} \to B$.
Then $E^d$ is also a finite $A$-module contained in $B$ for any $d \ge 1$.
Denoting $E^0 =A$ we obtain a graded $A$-algebra $E'=\oplus_{d \ge 0}E^d$ and a morphism $X'=Proj\left( E' \right) \to X$. 
The affine charts of $X'$ are given by $SpecA'_i$, where $A'_i$ is the zero grading part of the localization $E'_{b_i}$.
Clearly $A'_i \subset B_i$.
Denoting $I_d=\ker(E^d \to B \to B_i)$, we have $A'_i = \lim_{d \to \infty} {b^{-d}_i \cdot E^d \slash I_d}$ which is exactly $A_i$.
This means we have open immersions
\begin{displaymath}
  \xymatrix {
        Y_i \ar@{^{(}->}[r] \ar [d]  & Y  \ar[dr]  &     \\
        X'_i \ar@{^{(}->}[r]   &   X' \ar[r]     & X.
         }
\end{displaymath}
As $Y= \cup Y_i$ the schematically dominant morphisms $Y_i \to X'_i$ glue to a schematically dominant morphism $Y \to X'$ over $X$.\\ 
Furthermore, for each $i$ we have a commutative diagram
\begin{displaymath}
  \xymatrix {
        Y_i \ar[r]^{\sigma} \ar [d]  & V_i  \ar[r]^{\tau} \ar[d]  & X'_i \ar[d]    \\
        Y \ar[r]^{\sigma}   &   \X  \ar[r]^{\tau}     & X.
         }
\end{displaymath}
Denoting the normalization $X''=Nor_Y X'$ and taking the canonical morphism  $\nu:X'' \to X'$, then by Lemma \ref{nor-bir}  we have a diagram over $X$
\begin{displaymath}
  \xymatrix {
        Y \ar[r]  \ar[rrd] & \X \ar[r]^{\tau} \ar[rd]^{\tau}  & X''\ar[d]^{\nu}    \\
        &     &  X'  .   
                 }
\end{displaymath}
We denote $X''_i=\nu^{-1}(X'_i)$. 
Now, by construction $s_i \in \OO_{X''}(X''_i) = \tau^{*}\OO_{\X}(V_i)$ and  $s_i|_{X''_i\cap X''_j} = s_j|_{X''_i \cap X''_j}$ for every pair $i,j$.
Since $\OO_{X''}$ is a sheaf, they glue to a section $s \in \OO_{X''}(X'')=\tau^{*}\OO_{\X}(\X)$.
\end{proof}

Note that the above construction yields the same topological space and the same sheaves for $A \subset B$ and for $Nor_{B}A \subset B$.

\subsubsection{The stalks}

\begin{prop} \label{stalks}
For any point $v=(\p,R_{v},\Phi) \in \X $, the stalk $\M_{\X,v}$ of the sheaf $\M_{\X}$ is isomorphic to  $B_{\p}$ and the stalk $\OO_{\X,v}$ of the sheaf $\OO_{\X}$ is isomorphic to the semi-valuation ring $S_{v}$.
\end{prop}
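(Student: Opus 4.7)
I would compute both stalks as directed colimits over the system of rational domains $U$ containing $v$, which form a neighbourhood basis of $v$ by Proposition \ref{base} and constitute a filtered system because the intersection of two rational domains is a rational domain (Remark \ref{inter. rational domains}):
\[
\M_{\X,v} = \varinjlim_{U \ni v} \M_{\X}(U), \qquad \OO_{\X,v} = \varinjlim_{U \ni v} \OO_{\X}(U).
\]
The plan is to construct ring maps $\Psi \colon \M_{\X,v} \to B_{\p}$ and $\Omega \colon \OO_{\X,v} \to S_v$ and show that each is a bijection.

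First I would handle $\M_\X$. For any rational domain $U = \X\left( \{a_i\}^n_{i=1} \slash b\right) $ containing $v$, the observation preceding Theorem \ref{trasit} gives $1 \le v(b)$, so $b \notin \p$ and there is a canonical ring map $\M_\X(U) = B_b \to B_{\p}$. These are compatible with the restrictions of $\M_\X$ (via the identification $\M_\X \cong \sigma_{*}\OO_{SpecB}$ from Theorem \ref{sheaves}), so they assemble into the desired ring map $\Psi \colon \M_{\X,v} \to B_\p$. For surjectivity, given $c/s \in B_\p$ I would use unboundedness of $v$ to choose $d \in B$ with $v(sd) \ge 1$; then $U = \X\left( \{1\} \slash sd\right) $ contains $v$ and $c/s = cd/(sd) \in B_{sd} = \M_\X(U)$ maps to $c/s \in B_\p$. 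For injectivity, if $x \in B_b$ goes to $0$ in $B_\p$, pick $t \notin \p$ with $tx = 0$ in $B_b$ and $d$ with $v(td) \ge 1$; by Remark \ref{inter. rational domains} the intersection $V = U \cap \X\left( \{1\} \slash td\right) $ is a rational domain with denominator $btd$, so $\M_\X(V) = B_{btd}$, in which $t$ is invertible and hence $x = 0$.

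For $\OO_\X$, the delicate step is that on any $U = \X\left( \{a_i\}^n_{i=1} \slash b\right)$ containing $v$, the restriction of $\Psi$ carries $\OO_\X(U) = Nor_{B_b}\left( A'\right) $, with $A' = \varphi_b(A)\left[ \tfrac{a_1}{b},\dots,\tfrac{a_n}{b}\right] $, into $S_v$. Indeed, $\Phi \colon A \to R_v$ together with the defining inequalities $v(a_i/b) \le 1$ show that the image of $A'$ in $k(\p)$ lies in $R_v$, and integral closedness of $R_v$ in $k(\p)$ (it being a valuation ring) ensures that the image of any element integral over $A'$ also lies in $R_v$; its preimage in $B_\p$ therefore lies in $S_v$. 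This defines $\Omega \colon \OO_{\X,v} \to S_v$.

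Surjectivity of $\Omega$ uses the same trick: given $c/s \in S_v$ we have $s \notin \p$ and $v(c) \le v(s)$, so choosing $d$ with $v(sd) \ge 1$ the rational domain $U = \X\left( \{cd,1\} \slash sd\right) $ contains $v$ (both conditions $v(cd) \le v(sd)$ and $v(1) \le v(sd)$ being direct), and then $c/s = cd/(sd)$ lies in $A' \subset Nor_{B_{sd}}(A') = \OO_\X(U)$. Injectivity of $\Omega$ is automatic, since each $\OO_\X(U) \hookrightarrow \M_\X(U)$ is an inclusion and filtered colimits preserve injections, so $\Omega$ factors as $\OO_{\X,v} \hookrightarrow \M_{\X,v} \cong B_\p$ with image in $S_v$. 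The main obstacle throughout is the repeated use of unboundedness of $v$ to shrink a rational neighbourhood so as to make a chosen element $s \in B \setminus \p$ a unit of $\M_\X(U)$; once this manoeuvre is in place, the remaining content reduces to integral closedness of $R_v$ in $k(\p)$ and the sheaf description of Theorem \ref{sheaves}.
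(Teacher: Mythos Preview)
Your argument is correct, but it takes a different route from the paper's.  Both proofs begin the same way, assembling the compatible maps $\M_\X(U)\to B_\p$ and $\OO_\X(U)\to S_v$ over rational neighbourhoods $U\ni v$ into maps on stalks.  From there you argue by explicit colimit computation: unboundedness of $v$ lets you shrink to a rational domain $\X(\{1\}/sd)$ or $\X(\{cd,1\}/sd)$ in which a given $s\notin\p$ becomes invertible, and this manoeuvre yields surjectivity and injectivity directly.  The paper instead verifies the abstract criterion of Remark~\ref{k0}(\ref{k1}): for any pair of coprime elements $\gamma,\eta\in\M_{\X,v}$, one of them divides the other in $\OO_{\X,v}$ (shown by passing to a rational domain where representatives $g,h$ are coprime and then to $\X(\{g\}/h)$ or $\X(\{h\}/g)$).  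This forces $(\M_{\X,v},\OO_{\X,v})$ to be itself a semi-valuation pair for $v$, after which the identification with $(B_\p,S_v)$ is read off from the structural facts in Remark~\ref{k0}.  Your approach is more elementary and makes the use of unboundedness completely transparent; the paper's approach is shorter once Remark~\ref{k0} is available and has the conceptual payoff that the stalk pair is seen to be a semi-valuation pair \emph{intrinsically}, not merely via an isomorphism with $(B_\p,S_v)$.
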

\begin{proof}
Fix a point $v= (\p,R_{v},\Phi) \in \X $.
The inclusion of sheaves $\OO_{\X}\subset \M_{\X}$ gives an inclusion of stalks $\OO_{\X,v}\subset \M_{\X,v}$.\\
By the definition of a semi-valuation ring we have a diagram
\begin{displaymath} 
\xymatrix {
        B \ar[r]          &   B_{\p} \ar [r]            &  {k(\p)}      \\
        A \ar [r] \ar [u] &   S_{v} \ar [r] \ar [u]     & {R_{v}} . \ar [u]
         }
\end{displaymath}
For any rational domain  $v \in W = \X(\{a_i\}_{i}\slash b)=Val(B_1,A_1)$ (we may assume that $A_1$ integrally closed in $B_1$) we have a unique factorization
\begin{displaymath} 
\xymatrix {
        B \ar[r]          &   \M_{\X}(W)=B_1 \ar [rr]  \ar@{-->} [dr]    &      &  {k(\p)}      \\
         & & B_{\p} \ar [ur] & \\
        A \ar [r] \ar [uu] &  \OO_{\X}(W)=A_1 \ar'[r] [rr] \ar [uu] \ar@{-->} [dr] &    & {R_{v}}. \ar [uu] \\
         & & S_{v}\ar [ur] \ar [uu] &
         }
\end{displaymath} 
Taking direct limits we get a unique diagram for the stalks
\begin{displaymath} 
       \xymatrix {
         \M_{\X,v} \ar[r]          &   B_{\p} \ar [r]            &  {k(\p)}      \\
         \OO_{\X,v} \ar [r] \ar [u] &   S_{v} \ar [r] \ar [u]     & {R_{v}} \ar [u]
         }
\end{displaymath}
and $v$ induces a valuation in $Val(\M_{\X,v},\OO_{\X,v})$.

Let $\gamma,\eta$ be co-prime elements in $\M_{\X,v}$, i.e. there are elements $\rho,\tau \in \M_{\X,v}$ such that $\gamma\rho+\eta\tau=1$.
It follows from Proposition \ref{base} that the intersection of a finite number of rational domains is again a rational domain.
Hence there is a rational domain $U=Val(B',A')$ with $g,h,r,t \in \M_{\X}(U)=B'$ such that $g,h,r,t$ are representatives of $\gamma,\eta,\rho,\tau$ respectively.
Then $gr+ht$ is a representative of $1 \in \M_{\X,v}$.
So there is a rational domain $V=Val(B'',A'') \subset U $ such that 
\begin{displaymath}
gr+ht|_{V}=1 \in \M_{\X}(V)=B''.
\end{displaymath}
Then we get that $g|_{V},h|_{V} \in \M_{\X}(V)=B''$ are representatives of $\gamma,\eta$ and are co-prime.
Furthermore $v$ induces (canonically) a valuation on $B''$ which has the same valuation ring as $v$.
By the transitivity of rational domains we may assume that $V=Val(B,A)$ and that $g,h \in B$ are co-prime and are representatives of $\gamma,\eta \in \M_{\X,v}$ respectively.
If $v(g)\le v(h)$ then $Val\left( B_h,\varphi_h(A)\left[ \frac{g}{h}\right] \right) $ is a rational domain and 
\begin{displaymath}
v \in Val\left( B_h,\varphi_h(A)\left[ \frac{g}{h}\right] \right)  \subset Val(B,A).
\end{displaymath}
Hence $v(\gamma \slash \eta)\le 1$, so $\gamma \in \eta \OO_{\X,v}$.
Conversely if $v(g)\ge v(h)$ then by the same reasoning $\eta \in \gamma \OO_{\X,v}$.
By Remark \ref{k0}.(\ref{k1})  $\OO_{\X,v}$ is a semi-valuation ring and $\M_{\X,v}$ its semi-fraction ring.
It now follows, also from Remark \ref{k0}, that for $\mathfrak{m}=\ker{v}$ in $\OO_{\X,v}$ we have $\M_{\X,v}=( \OO_{\X,v})_{\mathfrak{m}}$ and $\OO_{\X,v} \slash \mathfrak{m}=R_{v}$, the valuation ring of $v$ in $k(\p)$.
Hence $\OO_{\X,v}=S_{v}$ and $\M_{\X,v}=B_{\p}$.

\end{proof}

\begin{remark} \label{sigma2}
For any point $\p \in Spec B$ the stalks of the point $ \sigma(\p) \in \X $ are $$\M_{\X,\sigma(\p)}=\OO_{\X,\sigma(\p)}=B_{\p}.$$  
\end{remark}

\section{Birational Spaces}

\subsection{Pairs of Rings and Pairs of Schemes}

\begin{definition}

\begin{enumerate}[(i)] \label{a}
	\item A \emph{pair of rings} $(B,A)$ is a ring $B$ and a sub-ring $A$.
	\item A homomorphism of pairs of rings $\varphi :(B,A) \to (B',A')$ is a ring homomorphism $\varphi: B \to B'$ such that $\varphi(A) \subset A'$.
	\item \label{a3} A homomorphism of pairs of rings $\varphi :(B,A) \to (B',A')$ is called \emph{adic} if the induced homomorphism $B \otimes_{A}A' \to B'$ is integral.
	\item The \emph{relative normalization} of a pair of rings $(B,A)$ is the induced pair of rings $(B,Nor_B A)$ together with a canonical homomorphism of pairs $$\nu=id_B:(B,A) \to (B,Nor_B A).$$
	\item A \emph{pair of schemes} $(Y \stackrel{f}{\to} X)$ or $(Y,X)$ is a pair of quasi-compact, quasi-separated schemes $Y,X$ together with an affine and schematically dominant morphism $f:Y \to X$.
	\item A morphism of pairs of schemes $g:(Y',X') \to (Y,X)$ is a pair of morphisms $g=(g_Y,g_X)$ forming a commutative diagram
	\begin{displaymath} 
       \xymatrix {
         Y' \ar[r]^{g_Y} \ar[d]^{f'}  &  Y \ar[d]^{f}   \\
         X' \ar [r]^{g_X}  &   X.
         }
\end{displaymath}
	\item \label{a4} A morphism of pairs of schemes $g:(Y',X') \to (Y,X)$ is called \emph{adic} if the induced morphism of schemes
	\begin{displaymath}
    \xymatrix@1  {Y' \to Y\times_{X}X'}
	\end{displaymath}
is integral.
	\item The \emph{relative normalization} of a pair of schemes $(Y,X)$ is the induced pair of schemes $(Y,Nor_Y X)$ together with a canonical morphism of pairs $$\nu=(id_Y,\nu_X):(Y,Nor_Y X) \to (Y,X),$$ where $Nor_YX=\mathbf{Spec}_X(Nor_{\OO_Y}\OO_X)$ and $\nu_X$ is the canonical morphism $Nor_Y X \to X$.
\end{enumerate}
\end{definition}

We denote the category of pairs of rings with their morphisms by \textit{pa-Ring} and the category of pairs of schemes with their morphisms by \textit{pa-Sch}.

\begin{definition}
Let $(Y \stackrel{f}{\to} X)$ be a pair of schemes.
Given an open (affine) subscheme $X' \subset X$ its preimage $Y'=f^{-1}(X')$ is an open (affine) subscheme of $Y'$.
The restriction of $f$ to $Y'$ makes $(Y',X')$ a pair of schemes.
We call $(Y',X')$  an \emph{open (affine) sub-pair of schemes}.
An \emph{affine covering} of the pair $(Y,X)$ is a collection of open sub-pairs $\{ (Y_i,X_i) \}$ such that $\{ X_i \}$ are affine and cover $X$ (then necessarily their preimages $\{ Y_i \}$ are affine and cover $Y$).
\end{definition}

\begin{lemma}
Assume the elements $b,a_1,\ldots,a_ n\, \in B$ generate the unit ideal.
Set $B'=B_b$ .
Let $\varphi_b:B\to B_b$ be the canonical map and denote  $A'=\varphi_b(A)[\frac{a_1}{b},\ldots,\frac{a_n}{b}]$.
Then the homomorphism of pairs of rings $\varphi_{b}: (B,A) \to (B',A')$ is adic.
\end{lemma}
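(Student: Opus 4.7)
The plan is to unwind the definition of \emph{adic} (item (\ref{a3}) of Definition \ref{a}) and show that the induced map $\psi:B\otimes_{A}A' \to B'=B_b$ is not merely integral but actually surjective, which trivially implies integrality.

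First I would identify the image of $\psi$. Since $A' = \varphi_b(A)[a_1/b,\ldots,a_n/b]$ is generated as an $A$-algebra (via $\varphi_b$) by the fractions $a_i/b$, the image of $\psi$ is the subring $\varphi_b(B)[a_1/b,\ldots,a_n/b] \subset B_b$. The map is clearly well defined because each $a_i/b$ lies in $B_b = B'$.

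The key step is to show that $1/b$ already belongs to this image. Because $b,a_1,\ldots,a_n$ generate the unit ideal of $B$, there exist $c_0,c_1,\ldots,c_n \in B$ with
\begin{equation*}
1 = c_0 b + c_1 a_1 + \cdots + c_n a_n.
\end{equation*}
Dividing through by $b$ inside $B_b$ yields
\begin{equation*}
\frac{1}{b} = \varphi_b(c_0) + \varphi_b(c_1)\cdot\frac{a_1}{b} + \cdots + \varphi_b(c_n)\cdot\frac{a_n}{b},
\end{equation*}
and the right-hand side manifestly lies in $\varphi_b(B)[a_1/b,\ldots,a_n/b]$.

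Once $1/b$ is in the image, the image contains both $\varphi_b(B)$ and $1/b$, hence contains $\varphi_b(B)[1/b]=B_b$; that is, $\psi$ is surjective. Surjective ring homomorphisms are trivially integral (every element of the target lies in the image, so satisfies the monic relation $T - x = 0$ with coefficients in the image). I do not anticipate a real obstacle here; the only mild subtlety is keeping track of where elements live (in $B$ versus in $B_b$ via $\varphi_b$) when writing the identity for $1/b$, but that is purely bookkeeping.
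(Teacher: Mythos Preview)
Your proof is correct and follows exactly the same approach as the paper: both identify the image of $B\otimes_A A' \to B_b$ with $\varphi_b(B)[a_1/b,\ldots,a_n/b]$ and use the unit-ideal hypothesis to conclude this is all of $B_b$, so the map is surjective (hence integral). The paper compresses this into a single line, while you spell out explicitly why $1/b$ lies in the image; your added detail is fine and the bookkeeping is handled correctly.
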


\begin{proof}
Since $b,a_1,\ldots,a_n$ generate the unit ideal of $B$
\begin{displaymath}
B \otimes_{A} A'= \varphi_b(B) \left[   \frac{a_1}{b},\ldots,\frac{a_n}{b} \right]   =B'.
\end{displaymath}

\end{proof}

\begin{lemma} \label{adic morph}
\begin{enumerate}[(i)]
\item Composition of adic morphisms of pairs of schemes is adic.
\item  Let $g :(Y',X') \to (Y,X)$ be an adic morphism of pairs of schemes and $(V,U)$ an open sub-pair of $(Y,X)$. Then the restriction of $g|_{(g_Y^{-1}(V),g_X^{-1}(U))}:(g_Y^{-1}(V),g_X^{-1}(U)) \to (V,U)$ is adic.
\item Let $g :(Y',X') \to (Y,X)$ be a morphism of pairs of schemes and $\{(V_i,U_i)\}$ an affine covering of $(Y,X)$.
If all the restrictions $(g_Y^{-1}(V_i),g_X^{-1}(U_i)) \to (V_i,U_i)$ are adic, then $g$ is adic.
\end{enumerate}
\end{lemma}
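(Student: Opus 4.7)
The plan is to translate each of the three statements into an assertion about integral morphisms of schemes and then apply the standard permanence properties: integrality is stable under composition and base change, and is local on the target. Recall that adicity of a morphism of pairs $g = (g_Y, g_X):(Y',X') \to (Y,X)$ means precisely that the canonical morphism $Y' \to Y \times_X X'$ is integral.

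For $(i)$, given adic morphisms $h:(Y'',X'') \to (Y',X')$ and $g:(Y',X') \to (Y,X)$, I would factor $Y'' \to Y \times_X X''$ as
\begin{displaymath}
Y'' \longrightarrow Y' \times_{X'} X'' \longrightarrow (Y \times_X X') \times_{X'} X'' = Y \times_X X''.
\end{displaymath}
The first arrow is integral by adicity of $h$, and the second is the base change along $X'' \to X'$ of the integral morphism $Y' \to Y \times_X X'$; their composition is therefore integral.

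For $(ii)$, the key observation is that since $V = f^{-1}(U)$, the product $V \times_U g_X^{-1}(U)$ is naturally an open subscheme of $Y \times_X X'$, realized as $p_1^{-1}(V) \cap p_2^{-1}(g_X^{-1}(U))$ via the two projections. The restriction of $Y' \to Y \times_X X'$ over this open is exactly the morphism $g_Y^{-1}(V) \to V \times_U g_X^{-1}(U)$ induced by the restricted pair morphism, and integrality being preserved by restriction to opens of the target yields the claim. For $(iii)$, the affine covering $\{(V_i,U_i)\}$ of $(Y,X)$ gives, by the same identification, an open covering $\{V_i \times_{U_i} g_X^{-1}(U_i)\}$ of $Y \times_X X'$ over which the pulled-back morphism is integral by hypothesis; locality of integrality on the target then delivers integrality of $Y' \to Y \times_X X'$.

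The only mild subtlety—more bookkeeping than obstacle—is the identification in $(ii)$ of $V \times_U g_X^{-1}(U)$ as an open subscheme of $Y \times_X X'$, together with the verification, via the commutativity of the pair diagram and the equality $V = f^{-1}(U)$, that the morphism $g_Y^{-1}(V) \to Y \times_X X'$ factors through this open. Once this is set up, all three parts are immediate consequences of the standard permanence properties of integral morphisms.
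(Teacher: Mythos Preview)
Your proof is correct and follows essentially the same approach as the paper: the same factorization $Y'' \to Y' \times_{X'} X'' \to Y \times_X X''$ for (i), the same identification of $V \times_U g_X^{-1}(U)$ with the open $V \times_X X' \subset Y \times_X X'$ for (ii), and the same appeal to locality of integrality on the base for (iii). The paper phrases (ii) via a pullback square rather than ``restriction to an open of the target,'' but that is the same argument.
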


\begin{proof}
$(i)$ Let $g :(Y',X') \to (Y,X)$ and $h :(Y'',X'') \to (Y',X')$ be adic morphisms of pairs of schemes.
Then $g \circ h : (Y'',X'') \to (Y,X)$ is a morphisms of pairs of schemes associated to the diagram
\begin{displaymath}
	\xymatrix{
	Y'' \ar[r]^{h} \ar[d] & Y' \ar[r]^{g} \ar[d] & Y \ar[d] \\
	X'' \ar[r]  & X' \ar[r]  & X. 
	}
\end{displaymath}
We want to show that the induced morphism $Y''\to Y \times_X X''$ is integral.

This morphism factors as $Y''\to Y' \times_{X'} X'' \to Y \times_X X''$.
As $h$ is adic the first arrow is integral.
Now, $g$ is also adic, so $Y' \to Y \times_X X'$ is also integral.
Taking the base change of this morphism by the morphism $Y \times_X X'' \to Y \times_X X' $ we get that $Y' \times_{X'} X'' \to Y \times_{X} X'$ is also integral.
Hence the composition $Y''\to Y \times_X X''$ is indeed integral.

$(ii)$ We denote $U' = g_{X}^{-1}(U) \subset X'$ and $V' = f'^{-1}(U')=g_{Y}^{-1}(V) \subset Y'$.
Now  $V \times_U U' =V \times_X X' $ is an open subset of $ Y \times_X X'$ and we have a pull back diagram
\begin{displaymath}
	\xymatrix{
	V' \ar[r] \ar[d] & V \times_X X' \ar[d] \\
	Y' \ar[r] & Y \times_X X'.
	}
\end{displaymath}
Since the bottom arrow is integral so is the top arrow.

$(iii)$ For each $i$ denote $U'_i$ and $V'_i$ as in $(ii)$ and $g_i$ the restriction of $g$ to $(V'_i,U'_i) \to (V_i,U_i)$.
Assume that $g_i:(V'_i,U'_i) \to (V_i,U_i)$ is adic for each $i$.
Then for each $i$ the map $V'_i \to V_i \times_{U_i} U'_i=V_i \times_X X' $ is integral.
Now $Y'=\cup V'_i$ and $Y \times_X X'=\cup (V_i \times_X X') $.
Since the property of being integral is local on the base we have that $Y'\to Y \times_X X'$ is integral.
\end{proof}

\subsection{The \textit{bir} Functor}

\begin{definition}
\begin{enumerate}[(i)]
	\item A \emph{pair-ringed space} $(\X,\M_{\X},\OO_{\X})$ is a topological space $\X$ together with a sheaf of pairs of rings $(\M_{\X},\OO_{\X})$ such that both $(\X,\M_{\X})$ and $(\X,\OO_{\X})$ are ringed spaces.
	\item A morphism of pair-ringed spaces 
\begin{displaymath}
(h,h^\sharp):(\X,\M_{\X},\OO_{\X}) \to (\Y,\M_{\Y},\OO_{\Y})
\end{displaymath}
is a continuous map $h:\X \to \Y$ together with a morphism of sheaves of pairs of rings 
\begin{displaymath}
h^{\sharp}:(\M_{\Y},\OO_{\Y}) \to (h_{*}\M_{\X},h_{*}\OO_{\X})
\end{displaymath}
such that both
\begin{displaymath}
(h,h^\sharp):(\X,\M_{\X}) \to (\Y,\M_{\Y}) \text{ and } (h,h^\sharp):(\X,\OO_{\X}) \to (\Y,\OO_{\Y})
\end{displaymath}
are morphisms of ringed spaces.

\end{enumerate}

\end{definition}

In Section 2 we constructed a pair-ringed space $(\X,\M_{\X},\OO_{\X})$ from a pair of rings $(B,A)$, namely $Val(B,A)$.
From now on for any pair of rings $(B,A)$ by $Val(B,A)$ we mean the pair-ringed space $(Val(B,A),\M_{Val(B,A)},\OO_{Val(B,A)})$.

\begin{definition}
\begin{enumerate}[(i)]
	\item An \emph{affinoid birational space} is a pair-ringed space $(\X,\M_{\X},\OO_{\X})$ isomorphic to $Val(B,A)$ for some pair of rings $(B,A)$.
	\item A pair-ringed space $(\X,\M_{\X},\OO_{\X})$ is a \emph{birational space} if every point $x \in \X$ has an open neighbourhood $U$ such that the induced subspace $(U,\M_{\X}|_U,\OO_{\X}|_U)$ is an affinoid birational space.
	\item A morphism of birational spaces
	\begin{displaymath}
(h,h^\sharp):(\X,\M_{\X},\OO_{\X}) \to (\Y,\M_{\Y},\OO_{\Y})
\end{displaymath}
 is a morphism of pair-ringed spaces such that $(h,h^\sharp):(\X,\OO_{\X}) \to (\Y,\OO_{\Y})$ is a map of locally ringed spaces (but not necessarily $(h,h^\sharp):(\X,\M_{\X}) \to (\Y,\M_{\Y})$; see Example \ref{M not local} below).
\end{enumerate}
\end{definition}

\noindent We denote the category of affinoid birational spaces with their morphisms by \textit{af-Birat} and the category of birational spaces with their morphisms by \textit{Birat}.

\begin{example} \label{e2}
Given a ring $B$ we have a homeomorphism  of topological spaces $Val(B,B) \simeq SpecB$ (Example \ref{e1}). 
From Remark \ref{sigma2} it is clear that considered as an affinoid birational space $Val(B,B)$ is exactly $(SpecB,\OO_{SpecB},\OO_{SpecB})$.
\end{example}

A scheme is locally isomorphic to an affine scheme so we obtain

\begin{cor}
\begin{enumerate}
\item Any scheme $(X,\OO_X)$ can be viewed as a birational space $$Val(X,X)=(X,\OO_X,\OO_X).$$
\item Any pair of schemes $(Y,X)$ induces a birational space $Val(Y,X)$.
\end{enumerate}
\end{cor}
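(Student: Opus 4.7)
The plan is to prove both parts by reducing to the affinoid case and then gluing.

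For (1), cover $X$ by affine opens $U_i = Spec B_i$. By Example \ref{e2}, each $(U_i, \OO_X|_{U_i}, \OO_X|_{U_i})$ is exactly the affinoid birational space $Val(B_i, B_i)$. Since every point of $X$ has such an affine open neighbourhood, $(X, \OO_X, \OO_X)$ satisfies the local condition from the definition of a birational space, and I record it as $Val(X, X)$.

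For (2), start with a finite affine open cover $X = \bigcup_{i} X_i$ with $X_i = Spec A_i$, possible by quasi-compactness of $X$. Since $f:Y \to X$ is affine, $f^{-1}(X_i) = Spec B_i$; since $f$ is schematically dominant, $A_i \hookrightarrow B_i$. Each pair $(B_i, A_i)$ produces an affinoid birational space $V_i := Val(B_i, A_i)$. I would then glue the $V_i$ along pre-images of the overlaps $X_i \cap X_j$ using Remark \ref{tau2}: for every $a \in A_i$,
\begin{displaymath}
\tau_i^{-1}(D_{X_i}(a)) = \X_i(\{1\} \slash a) = Val((B_i)_a, (A_i)_a)
\end{displaymath}
is a rational domain of $V_i$, hence itself an affinoid birational space. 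Using quasi-compactness and quasi-separatedness of $X$, cover each overlap $X_i \cap X_j$ by finitely many affines that are principal open in both $X_i$ and $X_j$. On any such common principal open $W \subset X$, the localised pair of rings depends only on the sub-pair $(f^{-1}(W), W)$, so Remark \ref{tau2} provides a canonical isomorphism between the corresponding rational domains of $V_i$ and $V_j$. These isomorphisms furnish the gluing data.

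The main obstacle is verifying the cocycle condition on triple overlaps and the independence of the glued space from the choice of affine cover. Both reduce to the fact that the affinoid birational space $Val(B, A)$ attached to an affine sub-pair $(Spec B, Spec A)$ is intrinsic to that sub-pair, which in turn follows from the description of the sheaves $\M$ and $\OO$ on a rational domain as localisations of the ambient pair of rings, together with Theorem \ref{sheaves}. Once these compatibilities are in hand, the glued topological space inherits a pair of sheaves $(\M, \OO)$ making it a birational space, and we call the resulting object $Val(Y, X)$.
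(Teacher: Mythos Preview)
Your proposal is correct and follows the same approach as the paper. The paper's entire proof is the single sentence ``A scheme is locally isomorphic to an affine scheme so we obtain'' preceding the corollary; you have spelled out in detail the gluing for part (2) that the paper leaves implicit.
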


\begin{remark}
\begin{enumerate}
\item For a pair of schemes $(Y,X)$, the points of $Val(Y,X)$ are 3-tuples $(y,R,\Phi)$ such that $y$ is a point in $Y$, $R$ is a valuation ring of the residue field $k(y)$ and $\Phi$ is a morphism of schemes $SpecR \to X$ making the diagram
\begin{displaymath}
	\xymatrix{
	Y \ar[r] \ar[d]^f & Speck(y) \ar[d] \\
	X \ar[r]^-{\Phi} & SpecR
	} 
\end{displaymath}
commute, and $ Speck(y) \to Y \times_X SpecR$ is a closed immersion \cite[\S3]{tem}.
\item Any affine covering $\{ (Y_i,X_i) \}$ of $(Y,X)$ gives rise to a covering of the birational space $Val(Y,X)$ consisting of affinoid birational spaces $\{Val(Y_i,X_i) \}$.
\end{enumerate}
\end{remark}

\begin{example}
For a finitely generated field extension $K / k$ there is an obvious natural map, homeomorphic onto its image, from the Zariski-Riemann space $RZ(K / k)$ as defined by Zariski to our $Val(K,k)$ \cite[Chapter VI \S17]{ZSII}.
Furthermore $Val(K,k)$ consists of the image of $RZ(K / k)$ together with the trivial valuation on $K$ which is a generic point.
At a valuation $v$ the stalk is the pair of rings $(K,R_v)$.
\end{example}

\begin{theorem}
There is a contra-variant functor  \emph{bir} from the category \emph{pa-Rings} of pairs of rings to the category \emph{af-Birat} of affinoid birational spaces.
\end{theorem}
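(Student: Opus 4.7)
The plan is to define \emph{bir} on objects by $(B,A)\mapsto Val(B,A)$ equipped with the pair of sheaves $\OO_{\X}\subset\M_{\X}$ from Theorem \ref{sheaves}; by construction and by Proposition \ref{stalks} this is an affinoid birational space. The real work is to attach a morphism of affinoid birational spaces to each morphism of pairs of rings and check that the assignment is contravariant and functorial.

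Given $\varphi:(B,A)\to(B',A')$, I define the underlying continuous map $h=\emph{bir}(\varphi):Val(B',A')\to Val(B,A)$ as the composition of the inclusion $Val(B',A')\hookrightarrow Spa(B',A')$, the pullback $\varphi^*:Spa(B',A')\to Spa(B,A)$ supplied by Lemma \ref{sec-spa}, and the continuous retraction $r:Spa(B,A)\to Val(B,A)$ of Proposition \ref{ret-cont}. Post-composition with $r$ is required because $\varphi^*$ need not carry $Val$ into $Val$ unless $\varphi$ is adic (Lemma \ref{val to val}). Combining the equality $r^{-1}(\X(\{a_i\}/b))=\R(\{a_i\}/b)$ from the proof of Proposition \ref{ret-cont} with $(\varphi^*)^{-1}\R(\{a_i\}/b)=\R(\{\varphi(a_i)\}/\varphi(b))$, one obtains
\begin{displaymath}
h^{-1}\!\left(\X(\{a_1,\ldots,a_n\}/b)\right)=Val(B',A')\!\left(\{\varphi(a_1),\ldots,\varphi(a_n)\}/\varphi(b)\right),
\end{displaymath}
which is a rational domain in $Val(B',A')$ because $\varphi(b),\varphi(a_1),\ldots,\varphi(a_n)$ still generate the unit ideal.

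For the sheaf morphism $h^{\sharp}$ I would work on the basis of rational domains (Proposition \ref{base}). Using $\M_{Val(B,A)}(U)=B_b$ and $\OO_{Val(B,A)}(U)=Nor_{B_b}\varphi_b(A)[a_1/b,\ldots,a_n/b]$, and their counterparts for $Val(B',A')$, the natural ring maps
\begin{displaymath}
B_b\longrightarrow B'_{\varphi(b)},\qquad Nor_{B_b}\varphi_b(A)[a_i/b]\longrightarrow Nor_{B'_{\varphi(b)}}\varphi'_{\varphi(b)}(A')[\varphi(a_i)/\varphi(b)]
\end{displaymath}
induced by $\varphi$ and the universal property of localization define $h^{\sharp}$; the $\OO$-component is well defined because ring homomorphisms preserve integrality and because $\varphi(A)\subset A'$. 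Compatibility with restrictions along nested rational domains is a routine diagram chase. Contravariant functoriality $\emph{bir}(\psi\circ\varphi)=\emph{bir}(\varphi)\circ\emph{bir}(\psi)$ reduces to the corresponding statement for pullback of valuations on $Spa$-spaces together with the equality $r\circ\varphi^*\circ r=r\circ\varphi^*$; this equality follows from Lemma \ref{mapping of pri-spe} (pullback preserves primary specializations) and the totality of the order on $P_v$ from Proposition \ref{oredering}, which together imply that the minimal primary specialization of $\varphi^*(v)$ coincides with that of $\varphi^*(r(v))$.

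The main obstacle is to verify that the $\OO$-component of $h^{\sharp}$ is a morphism of locally ringed spaces. By Proposition \ref{stalks} the stalks at $v'\in Val(B',A')$ and at $h(v')\in Val(B,A)$ are the semi-valuation rings $S_{v'}$ and $S_{h(v')}$. Remark \ref{pullback semi-val} gives a local homomorphism $S_{\varphi^*(v')}\to S_{v'}$ arising from pulling back along $\varphi$, and since $h(v')=r(\varphi^*(v'))$ is by construction the minimal primary specialization of $\varphi^*(v')$, Lemma \ref{pri-spe} yields a local homomorphism $S_{h(v')}\to S_{\varphi^*(v')}$. The composite $S_{h(v')}\to S_{v'}$ is local and agrees with the stalk map induced by $h^{\sharp}$ on rational-domain sections, completing the construction of the functor.
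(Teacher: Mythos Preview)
Your proposal is correct and follows essentially the same approach as the paper: both define $\varphi_{bir}$ as $r\circ\varphi^*$ restricted to $Val$, obtain locality of the $\OO$-stalk map by composing the local homomorphisms from Remark~\ref{pullback semi-val} and Lemma~\ref{pri-spe}, and deduce functoriality from Lemma~\ref{mapping of pri-spe} together with the uniqueness of the minimal primary specialization in Proposition~\ref{oredering}. The only difference is presentational: you construct $h^{\sharp}$ explicitly on rational-domain sections before passing to stalks, whereas the paper writes down only the stalk maps; your version is slightly more careful in this respect but not substantively different.
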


\begin{proof}

We already saw the construction of an affinoid birational space $Val(B,A)$ from a pair of rings $(B,A)$, we set $(B,A)_{bir}=Val(B,A)$.

For two pairs of rings $(B_1,A_1),$  $(B_2,A_2)$ and a homomorphism of pairs of rings $\varphi :(B_1,A_1) \to (B_2,A_2)$  we define the map of topological spaces $\varphi_{bir}$ by the composition
\begin{displaymath} 
       \xymatrix {
        Spa(B_2,A_2) \ar[r]^{\varphi^*}  &   Spa(B_1,A_1) \ar [d]^{r}    \\
        Val(B_2,A_2) \ar@{-->}[r]^{\varphi_{bir}} \ar@{^{(}->}[u] &    Val(B_1,A_1)  
         }
\end{displaymath}
where $\varphi^*$ is the pull back map defined in section \ref{Spa} and $r$ is the retraction defined in section \ref{retract}.

We saw that both $\varphi^*$ (Lemma \ref{sec-spa}) and the retraction (Lemma \ref{ret-cont}) and  are continuous so $\varphi_{bir}$ is continuous.

For $v=(\p,R_{v},\Phi)  \in Val(B_2,A_2)$ we have 
\begin{align*}
 \varphi^*(v)  =v \circ \varphi & \in Spa(B_1,A_1)  \\
\varphi_{bir}(v) = r(v \circ \varphi )= & w=(\q,R_{w},\Psi)  \in Val(B_1,A_1).
\end{align*}

Since $w$ is a primary specialisation of the pullback valuation $\varphi^*(v)=v\circ \varphi$ there is a natural homomorphism of the stalks
\begin{displaymath}
	\xymatrix{
\M_{Val(B_1,A_1),w}=(B_1)_{\q} \ar[r] & (B_1)_{\varphi^{-1}(\p)} \ar[r] & (B_2)_{\p}= \M_{Val(B_2,A_2),v} \\
\OO_{Val(B_1,A_1),w}=S_w \ar[u] \ar[r] & S_{v\circ \varphi} \ar[u] \ar[r] & S_v=\OO_{Val(B_2,A_2),v} .\ar[u]
}
\end{displaymath}
As we saw in Remark \ref{pullback semi-val} and Lemma \ref{pri-spe}  both bottom arrows are local homomorphism, hence so is their composition. 
Hence we obtain a morphism of affinoid birational spaces 
\begin{equation*}
\varphi_{bir}: (B_2,A_2)_{bir} \to (B_1,A_1)_{bir}.
\end{equation*}

It is obvious that \textit{bir} respects identity homomorphisms.
As for composition, let
\begin{displaymath}
(B_1,A_1) \stackrel{\varphi}{\to} (B_2,A_2) \stackrel{\psi}{\to} (B_3,A_3)
\end{displaymath} 
be homomorphisms of pairs of rings.
For $v=(\p,R_{v},\Phi)  \in Val(B_3,A_3)$ we have $(\psi \circ \varphi)^*(v)=\varphi^*(\psi^*(v))$ as elements of $Spa(B_1,A_1)$.
By construction $(\psi \circ \varphi)_{bir}(v)$ is a primary specialization of $(\psi \circ \varphi)^*(v)$.
Also, $\psi_{bir}(v)$ is a primary specialization of $\psi^*(v)$ as elements of $Spa(B_2,A_2)$, and $\varphi_{bir}(\psi_{bir}(v))$ is a primary specialization of $\varphi^*(\psi_{bir}(v))$ as elements of $Spa(B_1,A_1)$.
It follows from Lemma \ref{mapping of pri-spe} that  $\varphi^*(\psi_{bir}(v))$ is a primary specialization of $\varphi^*(\psi^*(v))$.
Hence both $\varphi_{bir}(\psi_{bir}(v))$ and $(\psi \circ \varphi)_{bir}(v)$ are primary specializations of $(\psi \circ \varphi)^*(v)$.
They are also both minimal primary specializations, since thy are elements of $Val(B_1,A_1)$.
By Proposition \ref{oredering} we have $\varphi_{bir}(\psi_{bir}(v)) = (\psi \circ \varphi)_{bir}(v)$.

Concluding, we obtained a functor 
\begin{displaymath}
bir : \textit{pa-Rings}^{op} \to \textit{af-Birat.}
\end{displaymath}

\end{proof}

The following example shows that the homomorphism on the stalks of $\M$ can indeed be not local.

\begin{example} \label{M not local}
Let $K$ be an algebraically closed field.
Consider $A=A'=B=K[T]$ and $B'=K[T,T^{-1}]$.
Let $\varphi:(B,A) \to (B',A')$ be the obvious map.
Clearly $\varphi$ is not adic.
Passing to birational spaces, we have
\begin{displaymath}
\varphi_{bir}:\X'=Val(K[T,T^{-1}],K[T]) \to Val(K[T],K[T])=\X.
\end{displaymath}
As we saw in Example \ref{e2} $\X=(\mathbb{A}^1,\OO_{\mathbb{A}^1},\OO_{\mathbb{A}^1})$.
Let $\eta$ be the generic point of $\mathbb{A}^1$ and of $Spec K[T,T^{-1}] \subset \mathbb{A}^1$.
Let $v$ be the valuation in  $\X'$ corresponding to the valuation ring $R_v=K[T]_{(T)} \subset K(T)=k(\eta)$.
It is indeed in $\X'$ as $K[T,T^{-1}]\otimes K[T]_{(T)} \to K(T)$ is surjective.
Since $K[T]\otimes K[T]_{(T)} \to k(\eta)=K(T)$ is not surjective the pullback $v \circ \varphi$ is not in $\X$.
Its primary specialization $w$ is the trivial valuation on $k(\p)=K$ for the ideal $\p=(T)$.
The stalks are $\M_{\X',v}=K(T)$ and $\M_{\X,w}=K[T]_{(T)}$.
The induced homomorphism of stalks is the obvious injection
\begin{displaymath}
K[T]_{(T)} \to K(T)
\end{displaymath}
which is not a local homomorphism.
\end{example}

\begin{lemma} \label{sigtau2}
Let $(B,A),$  $(B',A')$ be two pairs of rings and let  $\varphi :(B,A) \to (B',A')$ be a homomorphism of pairs.
Then the diagram of topological spaces
\begin{displaymath}
	\xymatrix{
	SpecB' \ar[r]^{\varphi^*} \ar[d]^{\sigma'} & SpecB \ar[d]^{\sigma} \\
	Val(B',A') \ar[d]^{\tau'} \ar[r]^{\varphi_{bir}} & Val(B,A) \ar[d]^{\tau} \\
	SpecA' \ar[r]^{\varphi^*|_{SpecA'}}  & SpecA
	}
\end{displaymath}
commutes.

\end{lemma}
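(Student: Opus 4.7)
The diagram consists of two squares which I would verify independently on points.

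For the upper square, I would proceed by direct computation. By definition $\sigma'(\p')$ is the trivial valuation on $k(\p')$, and following the pullback recipe given just after Lemma \ref{sec-spa}, its image $\varphi^*(\sigma'(\p'))$ has kernel $\varphi^{-1}(\p')$ and valuation ring $k(\p')\cap k(\varphi^{-1}(\p')) = k(\varphi^{-1}(\p'))$; that is, it is the trivial valuation on $k(\varphi^{-1}(\p'))$, which is precisely $\sigma(\varphi^*(\p'))$. Since any trivial valuation has value group $\{1\}$ it is automatically unbounded, hence already lies in $Val(B,A)$ and is fixed by the retraction $r$. Therefore $\varphi_{bir}(\sigma'(\p')) = r(\varphi^*(\sigma'(\p'))) = \sigma(\varphi^*(\p'))$, giving the upper square.

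For the lower square, my plan is to use the semi-valuation-ring reformulation of $\tau$ from Remark \ref{tau}, which in fact makes sense on all of $Spa(B,A)$: for any valuation $v$, the point $\tau(v)$ is the pullback to $A$ of the maximal ideal $\m_v$ of the semi-valuation ring $S_v$. Fix $v' \in Val(B',A')$ and write $u = \varphi^*(v') \in Spa(B,A)$ and $w = r(u) = \varphi_{bir}(v') \in Val(B,A)$. Combining Remark \ref{pullback semi-val} (which supplies a local homomorphism $S_u \to S_{v'}$) with Lemma \ref{pri-spe} (which supplies a local homomorphism $S_w \to S_u$, since $w$ is a primary specialisation of $u$), I obtain a chain of local ring maps
\[
S_w \longrightarrow S_u \longrightarrow S_{v'},
\]
along which $\m_{v'}$ pulls back successively to $\m_u$ and then to $\m_w$. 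Because $\varphi$ is a morphism of pairs, the two composites $A \to S_w \to S_u \to S_{v'}$ and $A \to A' \to S_{v'}$ coincide; pulling back $\m_{v'}$ along them gives $\tau(\varphi_{bir}(v'))$ on one side and $\varphi^*(\tau'(v'))$ on the other, so the two ideals of $A$ agree.

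I expect no real obstacle here. The only mildly subtle point is the invariance of $\tau$ under primary specialisation, but this is immediate from the locality of $S_w \to S_u$ in Lemma \ref{pri-spe}: a local homomorphism pulls a maximal ideal back to a maximal ideal. With both squares handled the full diagram commutes.
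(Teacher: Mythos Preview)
Your proof is correct and follows essentially the same approach as the paper: both arguments verify the two squares separately, handling the upper one by computing that the pullback of a trivial valuation is again trivial (hence fixed by the retraction), and the lower one by using the description of $\tau$ via the maximal ideal of the semi-valuation ring together with the locality of the induced map $S_w \to S_{v'}$. Your version is slightly more explicit in factoring this local homomorphism through the intermediate $S_u = S_{\varphi^*(v')}$ via Remark~\ref{pullback semi-val} and Lemma~\ref{pri-spe}, whereas the paper simply invokes the composite local map; but the content is the same.
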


\begin{proof}
For $\p \in Spec B'$, $\sigma \circ \varphi^* (\p) \in Val(B,A)$ is the trivial valuation on the residue field $k(\varphi^{-1}(\p))$.
Also $\sigma'(\p)$ is the trivial valuation on the residue field $k(\p)$.
The homomorphism $\varphi:B \to B'$ induces an injection $k(\varphi^{-1}(\p)) \to k(\p)$ , so the composition $  \varphi_{bir}\circ \sigma'(\p)$ is just the trivial valuation on the residue field $k(\varphi^{-1}(\p))$.

Now, given $v=(\p,R_{v},\Phi) \in Val(B',A')$ let $\varphi_{bir}(v)=w=(w,R_{w},\Psi)$.
Denote by $\m_v$ the maximal ideal of $S_v$ and by $\m_w$ the maximal ideal of $S_w$.
The homomorphisms $\Phi:A' \to R_v$ and $\Psi:A \to R_w$ factor through $S_v$ and $S_w$ respectively, denote these by $\Phi':A' \to S_v$ and $\Psi':A \to S_w$.
By Remark \ref{tau} we have $\tau'(v)=\Phi'^{-1}(\m_v)$ and $\tau(\varphi_{bir}(v))=\Psi'^{-1}(\m_w)$.
Since the induced homomorphism $S_v \to S_w$ is local we have $\varphi^{-1}(\Phi'^{-1}(\m_v)) \cap A =\Psi'^{-1}(\m_w)$.
Hence 
\begin{displaymath}
\tau(\varphi_{bir}(v))=\Psi'^{-1}(\m_w)=\varphi^{-1}(\Phi'^{-1}(\m_v)) \cap A = \varphi^*|_{SpecA'}(\tau'(v)).
\end{displaymath}
\end{proof}

Given two pairs of rings $(B,A) , (B',A')$ and a morphism of  affinoid birational spaces $h:Val(B,A) \to Val(B',A')$, by taking global sections we get a homomorphism of rings $\varphi$ which makes a commutative diagram
\begin{displaymath}
       \xymatrix {
        B' \ar[r]^{\varphi}         &   B  \\
        Nor_{B'}A' \ar [r] \ar [u] &   Nor_{B} A. \ar [u] 
         }
\end{displaymath}
Composition with the inclusion $A' \subset Nor_{B'}A'$ gives a morphism of pairs $\varphi: (B',A') \to (B,Nor_{B} A)$.
Since $Val(B,Nor_{B}A)=Val(B,A)$ by applying the $bir$ functor we obtain another morphism of  affinoid birational spaces $\varphi_{bir}:Val(B,A) \to Val(B',A')$.

\begin{theorem}  \label{important theorem}
The \emph{bir} functor is an anti-equivalence of the category of pairs of rings \emph{pa-Rings} localized at the class of relative normalizations and the category of affinoid birational spaces \emph{af-Birat}.
\end{theorem}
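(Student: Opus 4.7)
The plan is to establish the anti-equivalence in three stages: first, to show that $bir$ inverts relative normalizations, so that it descends to the localized category; second, to verify essential surjectivity; third, to prove full faithfulness on the localized category. Stage one is essentially already done: Lemma \ref{nor-bir} gives $Val(B,A) = Val(B, Nor_B A)$ on underlying topological spaces, and the remark following Theorem \ref{sheaves} observes that the sheaves $\M$ and $\OO$ are unchanged, so the image under $bir$ of the normalization $\nu:(B,A)\to (B, Nor_B A)$ is the identity morphism of pair-ringed spaces. Stage two is immediate from the definition of affinoid birational space.

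The substance lies in full faithfulness. The first key step I would carry out is to compute global sections: I would show $\Gamma(Val(B,A),\M) \cong B$, which follows from the isomorphism $\M \cong \sigma_*\OO_{SpecB}$ established inside the proof of Theorem \ref{sheaves}, and $\Gamma(Val(B,A),\OO) \cong Nor_B A$, which is the sheaf definition applied to the top rational domain $\X = \X(\{1\}/1)$. Combined with the construction sketched just before the statement, this associates to each morphism $h: Val(B,A) \to Val(B',A')$ of affinoid birational spaces a homomorphism of pairs $\varphi_h: (B',A') \to (B, Nor_B A)$, representing a morphism $(B',A') \to (B,A)$ in the localized category via the zigzag $(B',A') \to (B, Nor_B A) \stackrel{\sim}{\leftarrow} (B,A)$. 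Faithfulness of $h \mapsto \varphi_h$ is then clear: two homomorphisms of pairs into $(B, Nor_B A)$ inducing the same morphism of birational spaces must agree on the global sections $B'$ and $Nor_{B'}A'$, hence are equal.

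To conclude that $h \mapsto \varphi_h$ is a two-sided inverse to $bir$, the remaining task is to verify $(\varphi_h)_{bir} = h$, and this is where I expect the main obstacle. I would proceed in two substeps. Pointwise: for $v = (\p, R_v, \Phi) \in Val(B,A)$, Proposition \ref{stalks} identifies the $\OO$- and $\M$-stalks as the semi-valuation ring $S_v$ inside $B_{\p}$ at $v$, and similarly at $h(v)$; since $h^\sharp$ is local with respect to $\OO$, Lemma \ref{pri-spe} combined with Remark \ref{pullback semi-val} forces $h(v)$ to be a primary specialization of the pullback $\varphi_h^*(v) \in Spa(B',A')$, and minimality (being a point of $Val(B',A')$) then yields $h(v) = r(\varphi_h^*(v)) = (\varphi_h)_{bir}(v)$. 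Sheaf-theoretically: both $h^\sharp$ and $(\varphi_h)_{bir}^\sharp$ induce the same map on global sections by construction, so the sheaf property on the basis of rational domains (Proposition \ref{base}), together with naturality under the pullback functoriality encoded in Lemma \ref{sigtau2}, forces agreement on every rational domain. The hardest part will be handling the sheaf-level comparison correctly in light of Example \ref{M not local}: since morphisms of birational spaces are only required to be local with respect to $\OO$ and not $\M$, the argument must rely on the rigidity imposed by the $\OO$-stalks (which are semi-valuation rings whose semi-fraction ring is the corresponding $\M$-stalk) to propagate the identification of $h^\sharp$ with $(\varphi_h)_{bir}^\sharp$ from global sections to $\M$ on every rational domain.
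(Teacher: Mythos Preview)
Your proposal is correct and follows essentially the same route as the paper: the pointwise fullness argument via Lemma~\ref{pri-spe}, Remark~\ref{pullback semi-val}, and minimality through Proposition~\ref{oredering} is exactly the paper's. Two cosmetic differences: the paper handles faithfulness via the embedding $\sigma:SpecB\hookrightarrow Val(B,A)$ together with Lemma~\ref{sigtau2} rather than via global sections, and it does not isolate your ``hardest part'' --- once the continuous map and the global-section homomorphism $\varphi$ agree, the $\M$-sheaf map over each rational domain is forced by the universal property of localization (since $\M'(U')=B'_{b'}$ is a localization of $B'$) and the $\OO$-map is its restriction, so the sheaf-level comparison is automatic rather than delicate.
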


\begin{proof}
Denote the class of relative normalization homomorphisms by $M$.
By Lemma \ref{nor-bir}  $bir$ factors through $\emph{pa-Rings}_M$ and by definition of \textit{af-Birat} it is essentially surjective.
It remains to show that 
\begin{displaymath}
bir : \textit{pa-Rings}_M \to \textit{af-Birat}
\end{displaymath}
is full and faithful. 

We start by proving fullness.
Let $(B',A') , (B,A)$ be two pairs of rings and $h$ a morphism of  affinoid birational spaces $h:(B,A)_{bir} \to (B',A')_{bir}$.
We may assume that $A'$ and $A$ are integrally closed in $B'$ and $B$ respectively.

Taking global sections we get a diagram 
\begin{displaymath} 
       \xymatrix {
        B' \ar[r]^{\varphi}         &   B    \\
        A' \ar [r] \ar [u] &    A \ar [u] 
         }
\end{displaymath}
i.e. $\varphi$ is a homomorphism of pairs of rings $\varphi :(B',A') \to (B,A)$.
We claim that $\varphi_{bir}=h$.

Given a valuation $v=(\p,R_{v},\Phi) \in Val(B,A)$ we denote  $h(v)=w=(\q,R_{w},\Psi) \in Val(B',A')$.
Passing to stalks we have a diagram of pairs of rings

\begin{displaymath} 
       \xymatrix {
        (B',A') \ar[r]^{\varphi}  \ar [d]   &   (B,A) \ar [d]     \\
(B'_{\q}, S_{w}) \ar [r]^{h^{\sharp}_{v}} & (B_{\p}, S_{v}) . 
           }
\end{displaymath}
Denote by $\n = \p B_{\p}$ the maximal ideal of $B_{\p}$ , $\n' = \q B'_{\q}$ the maximal ideal of $B'_{\q}$, $\m_v$ the maximal ideal of $S_v$ and $\m_w$ the maximal ideal of $S_w$.
Since $B'_{\q}$ is a local ring ${h^{\sharp}_{v}}^{-1}(\n) \subset \n'$.
Pulling back to $B'$ we see that $\varphi^{-1}(\p) \subset \q$.
Hence the bottom line of the above diagram can be factored as
\begin{displaymath}
\xymatrix{
	B'_{\q} \ar[r] & B'_{\varphi^{-1}(\p)}=(B'_{\q})_{{h^{\sharp}}^{-1}_{v}(\n)} \ar[r] & B_{\p} \\
	 S_{w} \ar[u] \ar[r] & S_{v \circ \varphi} \ar[r] \ar[u] & S_{v} .\ar[u]
	 }
\end{displaymath}
By Remark \ref{pullback semi-val} and the definition of morphisms of birational spaces we see that the bottom left arrow is a local homomorphism.
By Lemma \ref{pri-spe} $w$ is a primary specialisation of the pullback valuation $v \circ \varphi$.
As $w$ is already in $Val(B',A')$ it has no primary specialisation other than itself.
By Proposition \ref{oredering} the primary specialisations of $v \circ \varphi$ are linearly ordered and we conclude that $r(v \circ \varphi)=w$ where $r$ is the retraction, or in other words $\varphi_{bir}(v)=h(v)$.

As for faithfulness, given two homomorphisms of pairs of rings $\varphi,\psi:(B',A') \to (B,A)$ such that $\varphi_{bir}=\psi_{bir}$ we obtain a diagram
\begin{displaymath} 
  \xymatrix { 
 SpecB \ar[d]^{\sigma} \ar@<0.5ex>[r]^{\varphi^*} \ar@<-0.5ex>[r]_{\psi^*} & SpecB' \ar[d]^{\sigma'}    \\
  Val(B,A)  \ar[r]^{\varphi_{bir}}_{\psi_{bir}}  & Val(B',A') .   
         }
\end{displaymath}

It follows from Proposition \ref{sigtau} (\ref{sigtau sigma}), Remark \ref{sigma2}, the definition of a morphism of birational spaces and Lemma \ref{sigtau2} that $\varphi^*=\psi^*$ as morphisms of schemes $SpecB \to SpecB'$, and hence $\varphi=\psi$ as homomorphisms of rings $B' \to B$.

If furthermore $A'$ and $A$ are integrally closed in $B'$ and $B$ respectively, then we also have that $\varphi=\psi$ as homomorphisms of pairs of rings $(B',A') \to (B,A)$.
\end{proof}

As an immediate corollary we obtain
 
\begin{cor} \label{important cor}
Let $(B',A') , (B,A)$ be two pairs of rings and $h$ an isomorphism of  affinoid birational spaces $h:(B,A)_{bir} \stackrel{\sim}{\to} (B',A')_{bir}$.
Then $B \simeq B'$ and $Nor_BA=Nor_BA'$ as subrings of $B$.
\end{cor}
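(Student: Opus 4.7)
The plan is to deduce the corollary directly from Theorem \ref{important theorem}. First, by Lemma \ref{nor-bir}, the birational space $(B,A)_{bir}$ depends only on the pair $(B,Nor_BA)$, and likewise for the primed side. So I would begin by replacing $A$ with $Nor_BA$ and $A'$ with $Nor_{B'}A'$, without changing $h$, and thereby reduce to the case where both subrings are already integrally closed in the respective ambient rings.

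Under this reduction I would invoke the fullness and faithfulness established in the proof of Theorem \ref{important theorem}. Fullness (taking global sections, which is legitimate precisely because $A, A'$ are integrally closed) yields a morphism of pairs $\varphi:(B',A') \to (B,A)$ with $\varphi_{bir}=h$, and applying the same construction to $h^{-1}$ yields $\psi:(B,A) \to (B',A')$ with $\psi_{bir}=h^{-1}$. Faithfulness, applied to the compositions against the identity morphisms, then forces $\psi\circ\varphi=\mathrm{id}_{(B',A')}$ and $\varphi\circ\psi=\mathrm{id}_{(B,A)}$, so $\varphi$ is an isomorphism of pairs. In particular $\varphi\colon B' \to B$ is a ring isomorphism, which gives $B\simeq B'$, and it sends $A'=Nor_{B'}A'$ isomorphically onto $A=Nor_BA$.

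To recover the statement for the original (non-normalized) $A'$ as a subring of $B'$, I would use that a ring isomorphism commutes with the formation of integral closures inside ambient rings. Identifying $B'$ with $B$ along $\varphi$, the original $A'$ is carried to $\varphi(A')\subset B$, and
\begin{displaymath}
Nor_B(\varphi(A')) \;=\; \varphi(Nor_{B'}A') \;=\; Nor_BA,
\end{displaymath}
so $Nor_BA'=Nor_BA$ as subrings of $B$.

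The only real obstacle is notational: one must carefully keep apart the original $A,A'$, their integral closures, and the transport of $A'$ through the ring isomorphism $\varphi$ before comparing integral closures inside $B$. Modulo this bookkeeping, the corollary is a formal consequence of the anti-equivalence, which is why it can be labelled ``immediate''.
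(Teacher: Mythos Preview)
Your proposal is correct and matches the paper's approach: the paper gives no explicit proof, simply labelling the result ``an immediate corollary'' of Theorem \ref{important theorem}, and your argument is precisely the standard unwinding of that anti-equivalence. Your care with the bookkeeping around transporting $A'$ into $B$ via the isomorphism $\varphi$ is appropriate, since the statement's phrase ``$Nor_BA'$ as subrings of $B$'' only makes sense after that identification.
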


Next we want to characterize adic morphisms. 
\begin{prop}
A homomorphism of pairs of rings $\varphi :(B_1,A_1) \to (B_2,A_2)$ is adic if and only if the induced morphism of ringed spaces 
$$(\varphi_{bir},\varphi_{bir}^\sharp):(Val(B_2,A_2),\M_{Val(B_2,A_2)}) \to (Val(B_1,A_1),\M_{Val(B_1,A_1)})$$
is a morphism of locally ringed spaces.
\end{prop}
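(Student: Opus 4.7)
The plan is first to recast being locally ringed with respect to $\M$ as a valuation-theoretic condition, and then handle the two directions separately. Fix $v = (\p,R_v,\Phi) \in Val(B_2,A_2)$ and set $w = \varphi_{bir}(v) = (\q,R_w,\Psi)$. By the construction of $\varphi_{bir}^\sharp$, the induced map on $\M$-stalks factors as
\[
(B_1)_\q \longrightarrow (B_1)_{\varphi^{-1}(\p)} \longrightarrow (B_2)_\p,
\]
where the second arrow is induced by localizing $\varphi$ and is automatically local, while the first arrow is the canonical localization map (well-defined because $\varphi^{-1}(\p) \subseteq \q$, since $w$ is a primary specialization of $v \circ \varphi$). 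A direct ideal computation shows that the first arrow is local if and only if $\q = \varphi^{-1}(\p)$; combined with the explicit formula for primary specializations and Proposition \ref{oredering} (the kernel grows strictly under any nontrivial primary specialization), this equality is equivalent to $v \circ \varphi = r(v \circ \varphi)$, i.e.\ to $v \circ \varphi$ already being unbounded. Hence $\varphi_{bir}$ is locally ringed with respect to $\M$ if and only if $\varphi^*$ restricts to a map $Val(B_2,A_2) \to Val(B_1,A_1)$.

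With this reformulation in hand, the forward implication is immediate: if $\varphi$ is adic then Lemma \ref{val to val} says exactly that $\varphi^*(Val(B_2,A_2)) \subseteq Val(B_1,A_1)$, so the morphism is locally ringed.

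For the converse I argue by contrapositive. Suppose $\varphi$ is not adic, so that $B_2$ is not integral over the image $C$ of $B_1 \otimes_{A_1} A_2 \to B_2$; pick $b \in B_2$ not integral over $C$. By the valuative characterization of integral closure (Bourbaki, \emph{Commutative Algebra}, Ch.\ VI, \S1, no.\ 3, Prop.\ 6), there exists a $C$-valuation $v_0$ on $B_2$ with $v_0(b) > 1$. Because $A_2 \subseteq C$ we have $v_0 \in Spa(B_2,A_2)$, and its minimal primary specialization $v = r(v_0)$ lies in $Val(B_2,A_2)$; since $v_0(b) > 1$ belongs to $c\Gamma_{v_0}$ we get $v(b) = v_0(b) > 1$, while $v(\varphi(b_1)) \le 1$ for every $b_1 \in B_1$ because $\varphi(b_1) \in C$. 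Thus the values of $v \circ \varphi$ on $B_1$ are bounded above by $v(b)$, which forces $v \circ \varphi$ to be bounded and hence to lie outside $Val(B_1,A_1)$, contradicting the first paragraph. The main technical obstacle here is ensuring that $v \circ \varphi$ is genuinely bounded rather than a trivial valuation (which would be unbounded): one has to verify that the value group of $v \circ \varphi$ is non-trivial, i.e.\ that some $b_1 \in B_1$ satisfies $v(\varphi(b_1)) < 1$. Securing this may require a judicious choice of the element $b$ and of the $C$-valuation $v_0$ supplied by the Bourbaki criterion, and is the heart of the converse direction.
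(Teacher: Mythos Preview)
Your reformulation in the first paragraph and the forward direction are correct and coincide with the paper's approach: both identify ``locally ringed on $\M$'' with the condition that $\varphi^*$ carries $Val(B_2,A_2)$ into $Val(B_1,A_1)$, and both invoke Lemma~\ref{val to val} for the easy implication.

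For the converse, the gap you flag is genuine and, as stated, cannot be repaired: the proposition is false without further hypotheses. Take $A_1 = B_1 = k$ a field, $A_2 = k$, $B_2 = k[t]$, and $\varphi$ the inclusion. This is not adic, since $k \otimes_k k = k \to k[t]$ is not integral. On the other hand $Spa(k,k) = Val(k,k)$ consists of a single point (the trivial valuation), so $\varphi^*$ automatically restricts to a map $Val(k[t],k) \to Val(k,k)$ and the morphism is locally ringed on $\M$ (the $\M$-stalk on the target is the field $k$, and any ring map out of a field is local). In your language: for every $v \in Val(k[t],k)$ the pullback $v \circ \varphi$ is the trivial valuation on $k$, which is unbounded; this is exactly the scenario you worried about, and no ``judicious choice'' of $b$ or $v_0$ avoids it.

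The paper's own argument for the converse stumbles on the same point, only less visibly. It asserts that unboundedness of $v \circ \varphi$ produces $b' \in B_1$ with $v(b) \le v(\varphi(b'))$; but unboundedness is relative to the value group $\Gamma_{v\circ\varphi}$, which may be strictly smaller than $\Gamma_v$, and $v(b)$ need not lie below any element of $\Gamma_{v\circ\varphi}$. In the example above with $v$ the place at infinity one has $\Gamma_{v\circ\varphi} = \{1\}$ while $v(t) > 1$, so no such $b'$ exists; and if one instead chooses $v$ trivial to force $v(b)=1$, the final finite-module step (the claim $bM \subset M$) fails. So your instinct that this is ``the heart of the converse direction'' is right, but the difficulty is not merely technical.
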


\begin{proof}
If $\varphi :(B_1,A_1) \to (B_2,A_2)$ is an adic homomorphism,
then by Lemma \ref{val to val} the pullback of every valuation in $Val(B_2,A_2)$ is already in $Val(B_1,A_1)$.
Hence $\varphi_{bir}$ sends $v \in Val(B_2,A_2)$ to $v\circ \varphi \in Val(B_1,A_1)$.
The induced homomorphism of stalks is the canonical map 
\begin{displaymath}
	\xymatrix{
	{B_1}_{\varphi^{-1}(\p)} \ar[r] & {B_2}_{\p} \\
	S_{v\circ \varphi} \ar[u] \ar[r] & S_v \ar[u]
	}
\end{displaymath}
and both the top and bottom arrows are local homomorphisms.

For the  opposite direction, denote $\X_1=Val(B_1,A_1)$ and $\X_2=Val(B_2,A_2)$ and assume that the morphism of ringed spaces $(\varphi_{bir},\varphi_{bir}^\sharp):(\X_2,\M_{\X_2}) \to (\X_1,\M_{\X_1})$ is a morphism of locally ringed spaces.
This is the same as saying that the pull back morphism $\varphi^*:Spa(B_2,A_2) \to Spa(B_1,A_1)$ restricts to a morphism $\X_2 \to \X_1$.
We want to show that $B_1 \otimes_{A_1}A_2 \to B_2$ is integral.

Let $b \in B_2$.
We want to show that $b$ is integral over $B_1 \otimes_{A_1}A_2$.
If $b \in nil(B_2)$ there is nothing to prove.
Else, there is a prime $\p$ of $B_2$ such that $b \notin \p$.
So there is some $v \in \X_2$ (with kernel $\p$) such that $v(b) \neq 0$.
By assumption, the $A_1$-valuation $\varphi^*(v)=v \circ \varphi$ on $B_1$ is not bounded so there exists some $b' \in B_1$ such that $v(b) \le v \circ \varphi(b') $.

Denote by $\tilde{b}$ the image of $b$ in the localization $(B_2)_{\varphi(b')}$, and by $\tilde{b}'$ the image of $b'$ in  $(B_1)_{b'}$.
For the induced valuation on  $(B_2)_{\varphi(b')}$ we have $0 < v(\tilde{b}) \le v \circ \varphi(\tilde{b}') $.
It follows that for $b''=\frac{\tilde{b}}{\varphi(\tilde{b}')} \in (B_2)_{\varphi(b')}$ we have $v(b'') \le 1$. 

Denote the image of $A_1$ in $(B_1)_{b'}$ by $A'_1$ and the image of $A_2$ in $(B_2)_{\varphi(b')}$ by $A'_2$.
It is enough to show that $b''$ is integral over $(B_1)_{b'} \otimes_{A'_1\left[ \frac{1}{b'}\right] }A'_2\left[ \frac{1}{\varphi(\tilde{b}')}\right] $.
Hence we may assume that $b \in B_2$, $b \notin nil(B_2)$ and there is some $v \in \X_2$ with $\p=ker(v)$ such that $0 < v(b) \le 1$, and show that $b$ is integral over $B_1 \otimes_{A_1}A_2$.
It is enough to show that $b$ is integral over $A_2$.

Denote the image of $b$ in the localization $(B_2)_{\p}$ by $a$.
As $0 < v(b) \le 1$, $a$ is actually is the semi-valuation ring $S_v$ of $v$.
By Proposition \ref{stalks} there are elements $g,f_1,\ldots,f_r$ in $B_2$ generating the unit ideal and a rational domain $U=\X_2\left(  \{ f_1,\ldots,f_r \}  \slash g\right)$ such that
\begin{displaymath}
b'=b|_{U} \in \OO_{\X_2} \left(U \right) = Nor_{(B_2)_g}\left( A'_2\left[ \frac{f_1}{g},\ldots,\frac{f_r}{g}\right] \right) 
\end{displaymath}
that maps to $a$ via the canonical map $\OO_{\X_2} \left(U \right) \to \OO_{\X_2,v}=S_v$, where $A'_2$ is the image of $A_2$ in $(B_2)_g$.
Again, by replacing $B_2$ with $(B_2)_g$ and $A_2$ with $A'_2$, we may assume that 
\begin{displaymath}
b \in \OO_{\X_2} \left(\X_2\left(  \{ f_1,\ldots,f_r \}  \slash 1\right) \right) = Nor_{B_2}\left( A_2\left[ f_1,\ldots,f_r\right] \right) 
\end{displaymath}
That is, $b$ is in $B_2$ and integral over  $A_2\left[ f_1,\ldots,f_r\right]$.
So there are $c_0,\ldots,c_s \in A_2\left[f_1,\ldots,f_r\right]$ such that $(b)^{s+1}+c_s(b)^{s}+\ldots+c_0=0$.
Let $c_{s+1}=1$ and set $M=\sum_{k=0}^s\sum_{j=0}^{s+1}A_2 c_j b^{k}$.
Now, $ M $ is a finite $A_2$-module contained in $B_2$.
We have $ bM \subset M $ by the integral relation.
As $ 1\in M$, we see that $Ann_{B_2}(M)=0$.
Thus $b$ is integral \cite[Chapter V \S1]{ZSI}.
\end{proof}

\begin{definition}
Given a birational space $\X$ and a pair of schemes $(Y,X)$.
If $(Y,X)_{bir}=\X$ we say that $(Y,X)$ is a \emph{scheme model} of $\X$.
Given another scheme model $(Y',X')$ of $\X$, if there is a morphism of pairs of schemes $g:(Y',X') \to (Y,X)$ such that $g_{bir}$ is the identity we say that $(Y',X')$ \emph{dominates} $(Y,X)$.
\end{definition}

\section{Relative Blow Ups}
\subsection{Construction of a Relative Blow Up}
Let $(Y \stackrel{f}{\to} X)$ be a pair of schemes.
Then $f_{*}\OO_Y$ is a quasi-coherent $\OO_X$-algebra via $f^{\sharp}:\OO_X \to f_{*}\OO_Y$.
The homomorphism $f^{\sharp}$ is an injection since $f$ is schematically dominant.
Let $\E$ be a finite quasi-coherent $\OO_X$-module contained in $f_{*}\OO_Y$ and containing $f^{\sharp}(\OO_X)$.
Then $f^{\sharp}$ induces a homomorphism of $\OO_X$-modules  $\OO_X \to \E$.
Pulling back to $Y$ we obtain a homomorphism of $\OO_Y$-modules  $\OO_Y \to f^*(\E) $.
Assume that we have an isomorphism of $\OO_Y$-modules $\varepsilon:\OO_Y \simeq f^*(\E)$ (not the homomorphism induced by $f^{\sharp}$ described above).
Using the multiplication of the $\OO_X$-algebra $f_{*}\OO_Y$ we define the product $\OO_X$-module $\E^d$ as the image of $\E^{\otimes d}$ under $f_{*}\OO_Y^{\otimes d} \to f_{*}\OO_Y$, with the tensor over $\OO_X$.
Now, the graded $\OO_X$-module $\E'=\oplus_{d \ge 0}\E^d$ (taking $\E^0=\OO_X$) is quasi-coherent and has a structure of a graded $\OO_X$-	 algebra.
We set $X_{\E}=\mathbf{Proj}_X \left(  \E'\right) $.
The construction gives a natural morphism $\pi_{\E}:X_{\E} \to X$ which is projective.
We also obtain an injection $ \E' \to f_{*} \OO_Y $ which gives rise to a natural morphism of $X$-schemes
\begin{displaymath}
 Y \simeq \mathbf{Spec}_X \left( f_{*}\OO_Y \right) \to \mathbf{Spec}_X \left(\E'\right),
\end{displaymath}
which is affine and schematically dominant.

\noindent Let $s$ be the homogeneous element of degree 1 in $\E'$ corresponding to $1 \in \OO_{X}(X)$ i.e. $s=f^{\sharp}(1) \in \E(X)$.
We obtain $\left( X_{\E}\right) _s =\mathbf{Spec}_X \left( \E' \slash  (s-1)\E' \right)$  and an affine, schematically dominant open immersion $\left( X_{\E}\right) _s \to  X_{\E}$ \cite[II\S3]{ega}.
Considered as an element of $f_{*}\OO_Y(X)$ , $s=1$.
So  $\left( X_{\E}\right) _s =\mathbf{ Spec}_X \left( \E' \right)$.
Composing we get an affine dominant morphism $f_{\E}:Y \to X_{\E}$.
In other words we have constructed a pair of schemes $(Y \stackrel{f_{\E}}{\to} X_{\E})$ with a morphism of pairs of schemes
\begin{displaymath}
g_{\E}=(id_Y,\pi_{\E}):(Y,X_{\E}) \to (Y,X).
\end{displaymath} 

\begin{definition}
We call the pair $(Y,X_{\E})$ together with the morphism $g_{\E}$ constructed above the \emph{relative blow up} of $(Y,X)$ with respect to $\E$.
\end{definition}

\begin{lemma} \label{bu-bir}
Let $X,Y,\E$ be as above. 
Then $(Y,X_{\E})_{bir}=(Y,X)_{bir}$.
\end{lemma}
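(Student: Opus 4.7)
The plan is to show that the morphism $g_\E : (Y, X_\E) \to (Y, X)$ induces an isomorphism of birational spaces. Since $Val(Y, X)$ is glued from affinoid pieces along an affine covering of the pair, and since the construction of $X_\E$ from $\E$ is compatible with restriction to open subschemes of $X$, the statement is local on $X$. I would therefore reduce to the case $X = Spec\, A$ and $Y = Spec\, B$, so that $\E$ corresponds to a finite $A$-submodule $E \subset B$ containing $1$.

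Next, choose generators $b_1, \ldots, b_n$ of $E$ as an $A$-module, including $b_1 = 1$; since $1 \in E$, the $b_i$ then also generate the unit ideal of $B$. The scheme $X_\E = \mathbf{Proj}_X(E')$ has the standard affine open cover $(X_\E)_{b_i} = Spec\, A_i$ with $A_i = A[b_1/b_i, \ldots, b_n/b_i]$, and the preimage of $(X_\E)_{b_i}$ in $Y$ is $Spec\, B_{b_i}$. This yields an affine cover of the pair $(Y, X_\E)$ by affine sub-pairs, whose associated affinoid birational spaces are $Val(B_{b_i}, A_i)$.

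The key identification uses Lemma \ref{rational domains in Spa}: each affinoid piece $Val(B_{b_i}, A_i)$ equals the rational domain $\X(\{b_1, \ldots, b_n\}/b_i)$ inside $\X = (Y, X)_{bir} = Val(B, A)$, and these rational domains form a rational cover of $\X$. The overlaps match as intersections of rational domains on both sides (Remark \ref{inter. rational domains}), so the gluing data of $(Y, X_\E)_{bir}$ coincides with the inclusion of the rational domains into $\X$. On each chart, by the definitions in Section 2, both sheaves $\M$ evaluate to $B_{b_i}$ and both sheaves $\OO$ evaluate to $Nor_{B_{b_i}}(A_i)$. Consequently, $g_{\E, bir}$ restricts on each chart to an identity morphism of affinoid birational spaces, and these identifications glue to a global isomorphism.

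The main obstacle is purely bookkeeping in the local case: one must choose generators of $E$ including $1$ so that the standard affine cover of $X_\E = \mathbf{Proj}_X(E')$ corresponds precisely to a rational cover of $\X$, and verify that restriction maps and overlap identifications are compatible on both sides. A cleaner coordinate-free sanity check on underlying points uses the valuative criterion for properness applied to the projective morphism $\pi_\E : X_\E \to X$: every $(y, R_v, \Phi : Spec\, R_v \to X)$ in $Val(Y, X)$ lifts uniquely to $Spec\, R_v \to X_\E$, giving a bijection on points that is then matched by the local affinoid-by-rational-domain identification.
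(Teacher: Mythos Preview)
Your proof is correct. The paper's own argument is much terser: after the same reduction to the affine case $X=SpecA$, $Y=SpecB$, it simply notes that functoriality of $bir$ gives a morphism $(Y,X_E)_{bir}\to\X$ and then invokes the valuative criterion for properness of $\pi_\E:X_E\to X$ to finish in one line. Your argument instead explicitly identifies the affine charts $Spec\,A_i$ of $X_\E$ with the rational domains $\X(\{b_1,\ldots,b_n\}/b_i)$ of $\X=Val(B,A)$---which form a rational covering since $1$ is among the generators---and checks that both sheaves agree chart by chart; this is essentially the computation already carried out in the proof of Theorem~\ref{sheaves}. Your route is more self-contained and makes the sheaf-level isomorphism visible, whereas the paper's one-line appeal to properness leaves that verification implicit. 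You even mention the valuative-criterion viewpoint at the end as a sanity check, so the two arguments are really complementary views of the same fact.
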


\begin{proof}
The question is local on $X$ so we assume that $X=SpecA$, $Y=SpecB$, $A \subset B$ and $E=\sum_{i=0}^n Ab_i$ a finite $A$-module contained in $B$ and containing $A$.
Denote $E'=\oplus_{d \ge 0}E^d$ and $\X=Val(B,A)=(Y,X)_{bir}$.
The relative blow up is $Y \to X_E=ProjE'$.
By functoriality of $bir$ we have a continuous map $(Y,X_{E})_{bir} \to \X$.
Clearly $X_E$ is proper over $X$ so by the valuative criterion of properness  we are done.
\end{proof}

\subsection{Properties of Relative Blow Ups}
Continuing the above discussion, let $(Y' \stackrel{f'}{\to} X')$ be another pair and $h:(Y',X') \to (Y,X)$ a morphism of pairs i.e.
\begin{displaymath}
	\xymatrix{
	Y' \ar[r]^{h_Y} \ar[d]^{f'} & Y \ar[d]^f \\
	X' \ar[r]^{h_X} & X.
	}
\end{displaymath}
Then $h_{X}^{*} \left( \E \right) $ is a finite quasi-coherent $h_{X}^{*}\left( \OO_X \right)=\OO_{X'}$-module.
The inclusion $\E \subset f_* \OO_Y$ induces a homomorphism  $h_{X}^{*}\left( \E \right) \to h_{X}^{*} \left(f_* \OO_Y\right)$ of sheaves on $X'$.
The morphism of schemes $h_Y:Y' \to Y$ is equipped with a homomorphism of sheaves $\OO_Y \to h_{Y *}\OO_{Y'}$ on $Y$.
Pushing forward to $X$ and then pulling back to $X'$ we get a homomorphism of sheaves on $X'$,  $h^* _X f_* \OO_Y \to h^* _X (f \circ h_Y)_* \OO_{Y'}$.
As $(f \circ h_Y)_* \OO_{Y'} = (h_{X} \circ f')_* \OO_{Y'}$ as sheaves on $X$, we obtain a homomorphism of sheaves on $X'$  $h^{*}_{X}(f \circ h_Y)_* \OO_Y \to( f')_* \OO_Y$ (corresponding to the identity homomorphism of the sheaves on $X$ by the bijection $Hom_{X'}(h_X^*\G,\F) \simeq Hom_X(\G,{h_X}_*,\F)$).
Composing we obtain a homomorphism of sheaves on $X'$
\begin{displaymath}
h_{X}^{*} \left( \E \right) \to h^* _X f_* \OO_Y \to h^* _X (f \circ h_Y)_* \OO_{Y'} \to f'_* \OO_{Y'}.
\end{displaymath}

\begin{definition}
We call the image of the above morphism the \emph{inverse image module} of $\E$ (with respect to the morphism of pairs $h$) and denote it  $h^{-1}(\E)$.
\end{definition}

It is a finite quasi-coherent $\OO_{X'}$-module contained in $f'_* \OO_{Y'}$.

\noindent We also have
\begin{displaymath}
\OO_X \stackrel{f^{\sharp}}{\to} f^{\sharp}(\OO_X) \subset \E \subset f_* \OO_Y.
\end{displaymath}
Applying $h^* _{X}$ we get homomorphisms of $\OO_{X'}$-modules
\begin{displaymath}
h^* _{X}\OO_X \stackrel{f^{\sharp}}{\to} h^* _{X}f^{\sharp}(\OO_X) \to h^* _{X}\E \to h^* _{X}f_* \OO_Y.
\end{displaymath}
Using the isomorphism $h_{X}^{*}\left( \OO_X\right) \stackrel{\sim}{\to} \OO_{X'}$ we obtain
\begin{displaymath}
\xymatrix{
	h^* _{X}\OO_X \ar[dd] \ar[r] &  h^* _{X}f^{\sharp}(\OO_X) \ar[r] & h^* _{X}\E \ar[r] \ar[d] &  h^* _{X}f_* \OO_Y \ar[dd] \\
	& & h^{-1}\left( \E \right) \ar[dr] & \\
	\OO_{X'} \ar[urr] \ar[rrr] &  &  &  f'_{ *} \OO_{Y'},
	}
\end{displaymath}
that is, the homomorphism $\OO_{X'} \to f'_*\OO_{Y'}$ factors through $h^{-1}(\E)$.
As $f':Y' \to X'$ is schematically dominant, $h^{-1}\left( \E \right) $ contains the image of $\OO_{X'}$.

\begin{prop} \label{invert}
Let $X,Y,\E$ be as above and $g_{\E}:(Y,X_{\E}) \to (Y,X) $ the relative blow up.
Then the inverse image module $g^{-1}_{\E}\left( \E \right)$ is an invertible sheaf on $X_{\E}$.
\end{prop}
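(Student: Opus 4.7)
The question is local on $X$, so I would first reduce to the affine case: $X = \mathrm{Spec}\,A$ and $Y = \mathrm{Spec}\,B$ with $A \subset B$, and $\E$ corresponding to a finite $A$-submodule $E = Ab_0 + Ab_1 + \cdots + Ab_n \subset B$ containing $A$, with $b_0 = 1$. Then $E' = \bigoplus_{d \geq 0} E^d$ is generated in degree one by $E$, so $X_{\E} = \mathrm{Proj}(E')$ is covered by the standard affine opens $U_i = D_+(b_i)$ with coordinate rings $A_i = (E')_{(b_i)} = A[b_0/b_i,\ldots,b_n/b_i] \subset B_{b_i}$. The plan is to show that on each $U_i$, the sheaf $g_{\E}^{-1}(\E)$ is the free $A_i$-module of rank one generated by $b_i$.

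The first step is to identify $(f_{\E})_*\OO_Y|_{U_i}$ with $B_{b_i}$, viewed as an $A_i$-module via the inclusion $A_i \hookrightarrow B_{b_i}$. For this I would exploit the factorization of $f_{\E}$ through the affine open $(X_{\E})_s = \mathbf{Spec}_X(E^\infty)$, where $E^\infty = A[b_1,\ldots,b_n]$ is the $A$-subalgebra of $B$ generated by $E$, the first arrow being induced by the inclusion $E^\infty \hookrightarrow B$. Since $f_{\E}(Y) \subset (X_{\E})_s$, the preimage $f_{\E}^{-1}(U_i)$ equals the preimage of $U_i \cap (X_{\E})_s$ under $Y \to (X_{\E})_s$, which a direct calculation identifies with $\mathrm{Spec}\,B_{b_i}$.

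The second step is then to read off the defining map $\pi_{\E}^*(\E)|_{U_i} \to (f_{\E})_*\OO_Y|_{U_i}$ as the $A_i$-linear multiplication map $E \otimes_A A_i \to B_{b_i}$. Its image is the $A_i$-submodule of $B_{b_i}$ generated by $b_0,\ldots,b_n$; since $b_j = b_i \cdot (b_j/b_i)$ with $b_j/b_i \in A_i$, this image equals $b_i A_i$. Because $b_i$ is a unit in $B_{b_i}$, multiplication by $b_i$ yields an $A_i$-linear isomorphism $A_i \cong b_i A_i \subset B_{b_i}$. Thus $g_{\E}^{-1}(\E)|_{U_i}$ is free of rank one over $A_i$, and gluing over the cover $\{U_i\}$ of $X_{\E}$ gives the invertibility globally.

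The main obstacle, as I see it, is the bookkeeping around the factorization of $f_{\E}$ through $(X_{\E})_s$; in particular one needs to verify that $(X_{\E})_s \hookrightarrow X_{\E}$ is an affine open immersion so that $f_{\E}$ is genuinely affine and $(f_{\E})_*\OO_Y$ can be read off chart by chart as above. Once these compatibilities are in place, the computation essentially identifies $g_{\E}^{-1}(\E)$ with the tautological line bundle $\OO_{X_{\E}}(1)$ realized concretely inside $(f_{\E})_*\OO_Y$, and invertibility is immediate.
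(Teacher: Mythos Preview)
Your proposal is correct and follows essentially the same approach as the paper: reduce to the affine case, describe the standard charts $U_i=\mathrm{Spec}\,A_i$ of $X_{\E}$ with $A_i=A[b_j/b_i]_j\subset B_{b_i}$, identify $g_{\E}^{-1}(\E)|_{U_i}$ with the image of $E\otimes_A A_i\to B_{b_i}$, and observe that this image is the free $A_i$-module $b_iA_i$ since $b_j=(b_j/b_i)\cdot b_i$. The only cosmetic difference is that the paper records the hypothesis as ``$b_0,\ldots,b_n$ generate the unit ideal of $B$'' (a consequence of $f^*(\E)\simeq\OO_Y$) rather than your normalization $b_0=1$, and it cites the earlier sheaf computation for the chart description of $f_{\E*}\OO_Y$ instead of re-deriving it via the factorization through $(X_{\E})_s$.
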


\begin{proof}

We have $g_{\E}:(Y,X_{\E}) \to (Y,X)$
\begin{displaymath}
	\xymatrix{
	Y \ar[r]^{id} \ar[d]^{f_{\E}} & Y \ar[d]^f \\
	X_{\E} \ar[r]^{\pi_{\E}} & X.
	}
\end{displaymath}
The inverse image module $g^{-1}_{\E}\left( \E \right)$ is the image of $\pi_{\E}^*(\E)$ in $f_{\E *}\OO_Y$ (substituting $h_Y=id_Y$ and $h_X=\pi_{\E}$ in the discussion above).
Again the question is local on $X$ so we assume that $X=SpecA$, $Y=SpecB$, $A \subset B$ and $E=\sum_{i=0}^n Ab_i$ such that $b_0,\ldots,b_n \in B$ generate the unit ideal.\\
Denote $A_i=\varphi_i(A)[\{\frac{b_j}{b_i}\}_{j}]\subset B_{b_i}$ ($\varphi_i$ the canonical homomorphism $B\to B_{b_i}$).
Then $X_{\E}=Proj\left( \oplus_{d \ge 0}E^d \right)$.
As we saw in Theorem \ref{sheaves} the affine charts are $X_{\E,i}=Spec( A_i ) $.
Denote by $E_i$ the image of $E \otimes_A A_i$ in  $B_{b_i}$ under the map induced by multiplication.
Since $\frac{b_j}{b_i} \in A_i$ we see that $b_j=\frac{b_j}{b_i} \cdot b_i \in E_i$.
So $E_i$ is generated by the single element $b_i$ over $A_i$.
In other words, multiplication by $b_i$ gives an isomorphism of modules $\OO_{X_{\E}}|_{X_{\E,i}} \stackrel {\sim}{\to} g^{-1}_{\E}\left( \E \right) |_{X_{\E,i}} $.
\end{proof}

\begin{prop}[Universal property] \label{univ bu}
Let $(Y,X)$ be a pair of schemes with $\E$ as above.
Let $(Y',X')$ be another pair with a morphism $h=(h_Y,h_X): (Y',X') \to (Y,X)$.
If  $\mathcal{L}=h^{-1}\left( \E \right) $ is invertible on $X'$ then $h$ factors uniquely through $g_{\E}$.
\end{prop}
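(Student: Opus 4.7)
The plan is to construct $\tilde{h}_X:X'\to X_\E$ from the universal property of $\mathbf{Proj}$, pair it with $h_Y$ to obtain a morphism of pairs factoring $h$, and establish uniqueness using schematic dominance of $f'$. Because $\E'=\oplus_{d\ge 0}\E^d$ is generated in degree one by $\E$, the standard universal property of $\mathbf{Proj}_X(\E')$ puts $X$-morphisms $X'\to X_\E$ in bijection with pairs $(\mathcal{L},\alpha)$, where $\mathcal{L}$ is an invertible $\OO_{X'}$-module and $\alpha:h_X^{*}\E\twoheadrightarrow\mathcal{L}$ is a surjection; the correspondence sends $\tilde{h}_X$ to $\tilde{h}_X^{*}\OO_{X_\E}(1)$ together with the pullback of the tautological surjection $\pi_\E^{*}\E\twoheadrightarrow\OO_{X_\E}(1)$. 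The hypothesis that $\mathcal{L}=h^{-1}(\E)$ is invertible, combined with the tautological surjection $h_X^{*}\E\twoheadrightarrow\mathcal{L}$ built into the very definition of $h^{-1}(\E)$, feeds this machine and produces $\tilde{h}_X$; the relation $\pi_\E\circ\tilde{h}_X=h_X$ is automatic.

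Next, to check that $\tilde{h}:=(h_Y,\tilde{h}_X)$ is a morphism of pairs factoring $h$, I must verify $\tilde{h}_X\circ f'=f_\E\circ h_Y$. Both sides are $X$-morphisms $Y'\to X_\E$, so by the same universal property it suffices to compare the associated invertible quotients of $(h_X\circ f')^{*}\E=(f\circ h_Y)^{*}\E$. The morphism $f_\E\circ h_Y$ yields $(\OO_{Y'},\beta)$, where $\beta$ is the pullback along $h_Y$ of the canonical surjection $f^{*}\E\twoheadrightarrow\OO_Y$ adjoint to the inclusion $\E\hookrightarrow f_{*}\OO_Y$; this is indeed surjective because $\OO_X\to\E\to f_{*}\OO_Y$ is the structural map, so the adjoint map splits through $\OO_Y$. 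The morphism $\tilde{h}_X\circ f'$ yields $((f')^{*}\mathcal{L},(f')^{*}\alpha)$. The inclusion $\mathcal{L}\hookrightarrow f'_{*}\OO_{Y'}$ is adjoint to a natural map $\gamma:(f')^{*}\mathcal{L}\to\OO_{Y'}$, and an adjunction chase identifies $\gamma\circ(f')^{*}\alpha$ with $\beta$. Hence $\gamma$ is surjective, and being a surjection between invertible sheaves it is an isomorphism, giving the required agreement of quotient data and thus $\tilde{h}_X\circ f'=f_\E\circ h_Y$.

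For uniqueness, suppose $\tilde{h}'=(h_Y,\tilde{h}_X')$ is a second factorisation, with $\tilde{h}_X'$ corresponding to $(\mathcal{L}',\alpha')$. Because $\tilde{h}_X'\circ f'=f_\E\circ h_Y$, the compatibility step yields $(f')^{*}\mathcal{L}'\cong\OO_{Y'}$ intertwining $(f')^{*}\alpha'$ with $\beta$. Using that $f'$ is affine and schematically dominant, the unit $\mathcal{L}'\to f'_{*}(f')^{*}\mathcal{L}'\cong f'_{*}\OO_{Y'}$ is injective (locally it reduces to $\OO_{X'}\hookrightarrow f'_{*}\OO_{Y'}$). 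Adjunction then shows the composite $h_X^{*}\E\twoheadrightarrow\mathcal{L}'\hookrightarrow f'_{*}\OO_{Y'}$ coincides with the canonical map defining $\mathcal{L}$, so $\mathcal{L}'$ embeds into $f'_{*}\OO_{Y'}$ with image exactly $\mathcal{L}=h^{-1}(\E)$; the two surjections from $h_X^{*}\E$ agree and $\tilde{h}_X'=\tilde{h}_X$ by uniqueness in the universal property. The main obstacle I anticipate is the bookkeeping in the middle step: one must track carefully which maps are adjoint to which, and invoke schematic dominance of $f'$ at exactly the right moment, so that the various invertible quotients get identified as \emph{subsheaves} of $f'_{*}\OO_{Y'}$ rather than merely as abstract invertible sheaves.
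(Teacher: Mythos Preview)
Your approach via the universal property of $\mathbf{Proj}$ is the same idea as the paper's, which works affine-locally and embeds $X_\E$ as a closed subscheme of $\mathbb{P}^n_A$ before invoking the morphism-from-sections criterion. However, there is a genuine gap in your first paragraph: the bijection you state is the universal property of $\mathbb{P}(\E)=\mathbf{Proj}_X(\mathrm{Sym}\,\E)$, not of $\mathbf{Proj}_X(\E')$. Since $\E'$ is a nontrivial quotient of $\mathrm{Sym}(\E)$ (the quotient by the multiplicative relations holding among the generators inside $f_*\OO_Y$), an arbitrary surjection $\alpha:h_X^*\E\twoheadrightarrow\mathcal{L}$ only yields a morphism to $\mathbb{P}(\E)$, and one must still check that it lands in the closed subscheme $X_\E$. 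Concretely, you need the induced graded surjection $\mathrm{Sym}(h_X^*\E)\twoheadrightarrow\bigoplus_d\mathcal{L}^{\otimes d}$ to factor through $h_X^*\E'$.

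In the situation at hand this factorisation does hold, precisely because $\mathcal{L}=h^{-1}(\E)$ is defined as the image inside the \emph{ring} $f'_*\OO_{Y'}$, and the map $h_X^*\E\to\mathcal{L}$ factors through the ring homomorphism $h_X^*f_*\OO_Y\to f'_*\OO_{Y'}$: any multiplicative relation among the generators in $f_*\OO_Y$ is carried to the same relation in $f'_*\OO_{Y'}$, hence also in $\mathcal{L}^d\subset f'_*\OO_{Y'}$. The paper makes exactly this check explicitly in local coordinates (``if $F(b_0,\dots,b_n)=0$ in $B$ then $F(s_0,\dots,s_n)=0$ in $\Gamma(X',\mathcal{L}^{\otimes d})$, so $\psi$ factors through $X_\E$''). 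Once you insert this verification, your argument goes through; your treatment of the compatibility $\tilde h_X\circ f'=f_\E\circ h_Y$ and of uniqueness via schematic dominance of $f'$ is in fact more careful than the paper's, which leaves both points rather implicit.
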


\begin{proof}
As this is a local question, assume $X=SpecA$ and $Y=SpecB$ with $A \subset B$.
Then $\E(X)=E=\sum_{i=0}^n Ab_i$ and $b_0,\ldots,b_n \in B$ generate the unit ideal.
The graded homomorphism
\begin{displaymath}
\delta^{\sharp}:A[T_0,\ldots,T_n] \to \oplus_{d \ge 0} E^d \qquad
T_i \mapsto b_i \in E
\end{displaymath}
gives rise to a closed immersion $\delta: X_{\E} \to \mathbb{P}^n_A$
with $\delta^* \OO(1) =\pi^*_{\E}(\E)$, with $\pi_{\E}:X_{\E} \to X$ as in Proposition \ref{invert}.
Denote by $s_{0},\ldots,s_{n}$ the global sections in $\Gamma(X',f'_*\OO_{Y'})$ corresponding to $b_0,\ldots,b_n$ via
\begin{displaymath}
h_{X}^{*} \left( \E \right) \to h^* _X f_* \OO_Y \to h^* _X (f \circ h_Y)_* \OO_{Y'} \to f'_* \OO_{Y'}.
\end{displaymath}
Then $s_{0},\ldots,s_{n}$ generate the invertible $\OO_{X'}$-module $\mathcal{L}$.
Hence they induce a unique morphism $\psi:X' \to \mathbb{P}^n_A$ with $\mathcal{L} \simeq \psi^* \OO(1)$.
If $F \in A[T_0,\ldots,T_n]$ is a homogeneous polynomial of degree d such that $F(b_0,\ldots,b_n)=0 \in B$ (i.e. $F \in \ker{\delta^{\sharp}}$) then $F(s_{0},\ldots,s_{n})=0 \in \Gamma(X',\mathcal{L}^{\otimes d})$.
So $\psi$ factors through $X_{\E}$ and $h_X=\pi_{\E} \circ \psi'$ where $\psi':X' \to X_{\E}$ is given by
\begin{displaymath}
	\xymatrix{
	X'  \ar[r]^{\psi} \ar@{-->}[d]_{\psi'} & {\mathbb{P}^n_A} \ar[d] \\
	 X_{\E} \ar[r]^{\pi_{\E}} \ar[ur]^{\delta} & X.
	}
\end{displaymath}

Taking $h'=(h_Y,\psi')$ we factor $h$ as 
\begin{displaymath}
	\xymatrix{
(Y',X')\ar[dr]^h \ar@{-->}
[r]^{h'} & (Y,X_{E}) \ar[d]^{g_{E}} \\
  &  (Y,X)
  }
\end{displaymath}
which shows the existence.

Now, we have
\begin{displaymath}
\mathcal{L}=h^{-1}\left( \E \right)=(g_{\E} \circ h')^{-1}\left( \E \right) = h'^{-1} \left( \OO_{X_{\E}}(1) \right).
\end{displaymath}
So we have a surjective homomorphism of $X'$-modules 
\begin{displaymath}
{\psi'}^* \OO_{X_{\E}}(1)={h'_X}^* \OO_{X_{\E}}(1) \to h'^{-1} \left( \OO_{X_{\E}}(1) \right)=\mathcal{L}.
\end{displaymath}
Since both ${\psi'}^* \OO_{X_{\E}}(1)$ and $\mathcal{L}$ are invertible this homomorphism is an isomorphism, and we conclude that  $h'$ is unique.
\end{proof}

\begin{lemma} \label{bu+nor}
Let $(Y,X)$ be a pair of schemes with $\E$ as above.
Then there is a natural isomorphism of pairs of schemes
$$ (Y,Nor_Y(X_{\E})) \simeq (Y,Nor_Y(X)_{Nor_Y(\E)}). $$
\end{lemma}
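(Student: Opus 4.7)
The plan is to use the universal property of the relative blow up (Proposition~\ref{univ bu}) together with that of the relative normalization, along with the classical fact that normalizing a blow up can be computed as $\mathbf{Proj}$ of a normalized Rees algebra. The idea is to characterize both sides of the claimed isomorphism as the unique morphism of pairs over $(Y,X)$ which (a) is relatively integrally closed in $Y$ and (b) pulls $\E$ back to an invertible module.

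First direction: construct a morphism $\phi:(Y,(Nor_Y X)_{Nor_Y\E})\to(Y,Nor_Y X_\E)$. The composition $(Y,(Nor_Y X)_{Nor_Y\E}) \to (Y,Nor_Y X) \to (Y,X)$ pulls $\E$ back to an invertible module: Proposition~\ref{invert} applied to the outer blow up shows $Nor_Y\E$ becomes invertible, and by the functoriality of the inverse-image module construction this coincides with the inverse image of $\E$ itself along the composed morphism. Proposition~\ref{univ bu} then produces a unique factorization through $(Y,X_\E)$. Provided that $(Nor_Y X)_{Nor_Y\E}$ is already relatively integrally closed in $Y$, this morphism further factors uniquely through the relative normalization $(Y,Nor_Y X_\E)\to(Y,X_\E)$, yielding $\phi$.

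Reverse direction: construct $\psi:(Y,Nor_Y X_\E)\to(Y,(Nor_Y X)_{Nor_Y\E})$. Since $\OO_{Nor_Y X_\E}$ is integrally closed in the pushforward of $\OO_Y$, the composition $(Y,Nor_Y X_\E)\to(Y,X_\E)\to(Y,X)$ factors uniquely through $(Y,Nor_Y X)\to(Y,X)$ by the universal property of relative normalization. Along the resulting morphism $Nor_Y X_\E\to Nor_Y X$, the inverse image of $Nor_Y\E$ agrees with the inverse image of $\E$ through $X_\E$ by functoriality, and is therefore invertible on $Nor_Y X_\E$. Proposition~\ref{univ bu} then supplies $\psi$. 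Uniqueness in both universal properties forces $\phi\circ\psi$ and $\psi\circ\phi$ to be the identity morphisms.

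The main obstacle is verifying the hypothesis used in the first direction, namely that $(Nor_YX)_{Nor_Y\E}$ is itself relatively integrally closed in $Y$. Reducing to the affine case with $X=\mathbf{Spec}\,A$, $Y=\mathbf{Spec}\,B$, and $E=\sum_{i=0}^{n}Ab_i$ where $b_0,\ldots,b_n$ generate the unit ideal in $B$, the question becomes whether on each chart $D_+(b_i)$ the local coordinate ring of $(Nor_YX)_{Nor_Y\E}$ is already integrally closed in $B_{b_i}$. This is the substantive content of the lemma. The correct interpretation of $Nor_Y\E$ -- as the degree-one piece of the integral closure of the graded Rees algebra $\bigoplus_{d\geq 0}\E^d$ inside $(f_*\OO_Y)[t]$ -- is engineered precisely so that this holds, and the result then follows from the classical identification of the normalization of $\mathbf{Proj}$ of a graded ring with $\mathbf{Proj}$ of its integral closure, applied chart-by-chart.
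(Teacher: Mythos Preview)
Your approach is valid but takes a genuinely different route from the paper's. The paper's proof is a two-line reduction: pass to the affine case and invoke the two commutative-algebra identities
\[
S^{-1}\,Nor_BA = Nor_{S^{-1}B}(S^{-1}A)\quad\text{and}\quad (Nor_BA)[t] = Nor_{B[t]}(A[t]),
\]
which together say that relative normalization commutes with localization and with adjoining a polynomial variable. These are applied directly to the graded Rees-type algebra $\bigoplus_d E^d$ to identify the chart rings of the two sides. No universal properties are used at all.

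Your argument instead characterizes both sides by the pair of universal properties (blow up and relative normalization) and then shows each side satisfies the other's characterization. This is more conceptual and makes explicit the one substantive point the paper's proof leaves implicit: that $(Nor_YX)_{Nor_Y\E}$ is already integrally closed in $Y$. You correctly flag this as the crux and also notice that it forces a particular reading of $Nor_Y\E$ (the paper never defines the symbol). That observation is genuinely useful. However, once you reduce to charts, the verification you need is \emph{exactly} the content of the paper's two identities, so the universal-property scaffolding, while correct, is a detour: one can simply compare the chart rings $Nor_{B_{b_i}}\!\big(A[b_j/b_i]\big)$ and $(Nor_BA)[b_j/b_i]$ directly. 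A minor caution: in constructing $\psi$ you assert that the inverse image of $Nor_Y\E$ on $Nor_YX_\E$ agrees with that of $\E$; this is immediate under the naive interpretation $Nor_Y\E = \E\cdot Nor_Y\OO_X$ but should be checked against whatever interpretation you ultimately adopt for the crux step, so that the two halves of your argument use the same object.
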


\begin{proof}
The general case immediately follows from the affine case.
The affine case follows from the facts that $ S^{-1}Nor_BA=Nor_{S^{-1}B}(S^{-1}A) $ and $ (Nor_BA)[t]= Nor_{B[t]}(A[t]) $, where $(B,A)$ is a pair of rings and $S \subset A$ is a multiplicatively closed subset.
\end{proof}

Assume two finite, quasi-coherent $\OO_X$-modules $\E'$ and $\E''$ on the pair of schemes $(Y,X)$, contained in $f_{*}\OO_Y$ and containing the image of $\OO_X$, together with isomorphisms of $\OO_Y$-modules $\varepsilon':\OO_Y \simeq f^*(\E')$ and $\varepsilon'':\OO_Y \simeq f^*(\E'')$.
Then $\E=\E' \cdot \E''$ is also a finite, quasi-coherent $\OO_X$-module contained in $f_{*}\OO_Y$ and containing the image of $\OO_X$.
Note that $\E$ is the image of $\E'\otimes_{\OO_X}\E''$ under $f_{*}\OO_Y \otimes_{\OO_X} f_{*}\OO_Y \to f_{*}\OO_Y$.
Taking $\varepsilon$ to be the isomorphism obtained by the composition of isomorphisms
\begin{displaymath}
\xymatrix{
\OO_Y \simeq \OO_Y \otimes_{\OO_Y} \OO_Y \ar[r]^{\varepsilon' \otimes \varepsilon''} & f^*(\E') \otimes_{\OO_Y} f^*(\E'') \ar[r] & f^*(\E),
}
\end{displaymath}
we can form the relative blow up $(Y,X_{\E})$.
By Proposition \ref{invert}, the inverse image module $g^{-1}_{\E}\left( \E \right)=g^{-1}_{\E}\left( \E' \right)\cdot g^{-1}_{\E}\left( \E'' \right)$ is an invertible sheaf on $X_{\E}$.
Hence $g^{-1}_{\E}\left( \E' \right)$ and $g^{-1}_{\E}\left( \E'' \right)$ are also invertible on $X_{\E}$.
By the universal property of the relative blow up $(Y,X_{\E}) \to (Y,X)$ factors through both $(Y,X_{\E'}) \to (Y,X)$ and $(Y,X_{\E''}) \to (Y,X)$.

\begin{lemma}
Let $(Y,X)$ be a pair of schemes.
Assume that $X'$ is an open subscheme of $X$.
Denote $Y'=f^{-1}(X')$.
Then a relative blow up of $(Y',X')$ extends to a relative blow up of $(Y,X)$.
\end{lemma}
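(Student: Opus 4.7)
The plan is to extend the datum defining the given relative blow up of $(Y', X')$ from $X'$ to all of $X$, and then verify that the resulting relative blow up of $(Y, X)$ restricts back to the original one over $X'$. Unpacking the construction of Section 4.1, a relative blow up of $(Y', X')$ is determined by a finite quasi-coherent $\OO_{X'}$-submodule $\E' \subset f'_*\OO_{Y'}$ containing the image of $\OO_{X'}$. Since $f$ is affine, $f_*$ commutes with restriction to open subsets, so $(f_*\OO_Y)|_{X'} = f'_*\OO_{Y'}$, and I may view $\E'$ as a finite type quasi-coherent subsheaf of $(f_*\OO_Y)|_{X'}$.

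Next I would invoke the standard extension result for finite type quasi-coherent subsheaves on quasi-compact, quasi-separated schemes (EGA I, 9.4.7, or the Stacks Project): any finite type quasi-coherent subsheaf of the restriction of a quasi-coherent $\OO_X$-module to a qcqs open subscheme extends to a finite type quasi-coherent subsheaf on all of $X$. Both $X$ and $X'$ are qcqs by the definition of a pair of schemes, so this applies and produces a finite type quasi-coherent subsheaf $\mathcal{G} \subset f_*\OO_Y$ with $\mathcal{G}|_{X'} = \E'$. Setting $\E = \mathcal{G} + f^\sharp(\OO_X)$ keeps the module finite type quasi-coherent, guarantees that it contains the image of $\OO_X$, and preserves $\E|_{X'} = \E'$ because $\E'$ already contained the image of $\OO_{X'}$.

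Finally I would form the relative blow up $g_\E : (Y, X_\E) \to (Y, X)$ exactly as in Section 4.1 and appeal to the compatibility of $\mathbf{Proj}_X$ with open immersions: for the graded algebra $\E^\bullet = \bigoplus_{d \ge 0} \E^d$ one has $(\E^\bullet)|_{X'} = (\E')^\bullet$, so $\pi_\E^{-1}(X') \simeq X'_{\E'}$ as $X'$-schemes. Combined with $f^{-1}(X') = Y'$ and the functoriality of the construction, this shows that $(Y, X_\E)$ restricts over $X'$ to the original $(Y', X'_{\E'})$, as desired. The main obstacle is producing the extension $\mathcal{G}$ of $\E'$ with the correct finiteness property; it is exactly here that the qcqs hypotheses built into the definition of a pair of schemes are essential. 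The remaining verifications — the compatibility of $\mathbf{Proj}$ with restriction to opens and the harmless adjustment of $\mathcal{G}$ to contain the image of the structure sheaf — are routine.
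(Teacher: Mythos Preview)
Your argument is correct and is the standard proof of this extension result. The paper itself does not give an argument here; its entire proof consists of the citation \cite[Corollary 3.4.4]{tem}. What you have written is essentially the content of that cited result: extend the finite-type quasi-coherent submodule using the qcqs extension lemma (EGA I, 9.4.7 / Stacks Project Tag 01PD), adjust by adding $f^\sharp(\OO_X)$ so that the extended module still contains the image of the structure sheaf, and use compatibility of $\mathbf{Proj}$ with restriction to opens to see that the resulting blow up restricts to the original one over $X'$.

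Two small remarks. First, your assertion that $X'$ is qcqs is not literally part of the hypothesis of the lemma, but it is implicit: for the phrase ``a relative blow up of $(Y',X')$'' to make sense in the paper's framework, $(Y',X')$ must itself be a pair of schemes, hence $X'$ is qcqs. You might make that inference explicit. Second, Section~4.1 also records an isomorphism $\varepsilon:\OO_Y \simeq f^*(\E)$ as part of the data; in the affine picture this amounts to the generators of $E$ generating the unit ideal of $B$, which is automatic once $1 \in E$. Since you arrange $f^\sharp(\OO_X) \subset \E$, this condition is satisfied for your extended module, so the construction of Section~4.1 goes through without further comment.
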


\begin{proof}
\cite[Corollary 3.4.4]{tem} 
\end{proof}

From the Lemma and the paragraph above it we obtain:

\begin{cor} \label{BL 2.6}
Let $(Y,X)$ be a pair of schemes with open sub-pairs of schemes $(Y_1,X_1),\ldots ,(Y_n,X_n)$.
For each $i=1,\ldots,n$ let $(Y_i,{X_i}_{\E_i}) \to (Y_i,X_i)$ be relative blow up.
Then:
\begin{enumerate}
	\item each relative blow up $(Y_i,{X_i}_{\E_i}) \to (Y_i,X_i)$ extends to a relative blow up $(Y,X_{\E'_i}) \to (Y,X)$.
	\item there is a relative blow up $(Y,X_{\E}) \to (Y,X)$ which factors through each $(Y,X_{\E'_i}) \to (Y,X)$.	 
\end{enumerate}
\end{cor}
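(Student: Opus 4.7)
The plan is to deduce both parts directly from the two tools gathered just before the corollary: the preceding extension lemma (relative blow ups over an open sub-pair extend to the whole pair), and the factorization/product construction for modules (given $\E',\E''$, the relative blow up with respect to $\E=\E'\cdot\E''$ factors through both $(Y,X_{\E'})$ and $(Y,X_{\E''})$ via the universal property in Proposition \ref{univ bu}).

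For part $(1)$, I would apply the preceding lemma term by term: since $(Y_i,X_i)$ is an open sub-pair of $(Y,X)$ (with $Y_i=f^{-1}(X_i)$), and $(Y_i,{X_i}_{\E_i})\to(Y_i,X_i)$ is a relative blow up, the lemma produces a finite quasi-coherent $\OO_X$-module $\E'_i\subset f_*\OO_Y$ containing the image of $\OO_X$, such that $\E'_i|_{X_i}=\E_i$ (after identifying $f_*\OO_Y|_{X_i}$ with ${f|_{Y_i}}_*\OO_{Y_i}$). Then $(Y,X_{\E'_i})\to(Y,X)$ is the desired extension.

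For part $(2)$, having produced $\E'_1,\ldots,\E'_n$, I would form the product
\[
\E \;=\; \E'_1\cdot\E'_2\cdots\E'_n,
\]
i.e.\ the image of $\E'_1\otimes_{\OO_X}\cdots\otimes_{\OO_X}\E'_n$ under iterated multiplication in $f_*\OO_Y$. This is again a finite quasi-coherent $\OO_X$-module contained in $f_*\OO_Y$ and containing the image of $\OO_X$, so the relative blow up $(Y,X_\E)\to(Y,X)$ is defined. By Proposition \ref{invert}, $g_\E^{-1}(\E)$ is invertible on $X_\E$, and since $g_\E^{-1}(\E)=g_\E^{-1}(\E'_1)\cdots g_\E^{-1}(\E'_n)$, each factor $g_\E^{-1}(\E'_i)$ is invertible on $X_\E$ as well. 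By the universal property (Proposition \ref{univ bu}) applied to the morphism of pairs $g_\E:(Y,X_\E)\to(Y,X)$ with respect to each $\E'_i$, we obtain a unique factorization $(Y,X_\E)\to(Y,X_{\E'_i})\to(Y,X)$ for every $i$.

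There is really no hard step here; the content of the corollary is packaging the two preceding results. The only mild subtlety, which I would make explicit, is verifying that $g_\E^{-1}(\E'_i)$ is invertible given that its product with the other factors is invertible — this follows because invertibility is local and on each affine chart of $X_\E$ one of the generators of $\E$ (namely the product of generators of the $\E'_j$) trivializes $g_\E^{-1}(\E)$, so each factor in the product is locally free of rank one on the same chart. This is essentially the same bookkeeping as in the proof of Proposition \ref{invert}.
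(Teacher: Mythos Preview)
Your proposal is correct and matches the paper's approach exactly: the paper states the corollary immediately after the extension lemma and the product paragraph with only the sentence ``From the Lemma and the paragraph above it we obtain,'' and you have simply written out what that sentence means. Your added remark on why each factor $g_\E^{-1}(\E'_i)$ is invertible once the product is (by choosing, on each chart, the generator of $\E$ to be a product of generators of the $\E'_j$ and varying one index at a time) is a welcome clarification of a step the paper leaves implicit.
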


\section{Birationl Spaces in Terms of Pairs of Schemes}

In this section we show that the \textit{bir} functor provides an equivalence of categories between the localization of the category of pairs of schemes, with respect to the class of relative blow-ups and relative normalizations, and the category of quasi-compact and quasi-separated birational spaces.

First we need to show that the functor \textit{bir} takes relative blow-ups and relative normalizations to isomorphisms.
Lemma \ref{bu-bir} gives the result for relative blow ups.
As for relative normalizations, given a pair of schemes $(Y,X)$, note that for every affine $U \subset X$ we have $\left( Nor_Y \OO_X \right)  (U)=Nor_{f_* \OO_Y(U)} \OO_X(U)$.
From Lemma \ref{nor-bir} we get the result.

\subsection{Faithfulness}

\begin{theorem}[\emph{bir} is Faithful] \label{Faithful}
Let $ (Y \stackrel{f}{\to} X) $ and $ (Y' \stackrel{f'}{\to} X') $ be pairs of schemes such that $X=Nor_YX$ and $X'=Nor_{Y'}X'$.
Denote $\X=(Y,X)_{bir}$ and $\X'=(Y',X')_{bir}$.
Let $g_1,g_2:(Y,X) \to (Y',X')$ be morphisms of pairs of schemes.
If $g_{1,bir}=g_{2,bir}$ as morphisms of the birational spaces $\X \to \X'$.
Then $g_1=g_2$.
\end{theorem}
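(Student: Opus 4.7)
The plan is to reduce the global problem to the affine setting where we can invoke the faithfulness part of Theorem \ref{important theorem}. The first task is to promote the hypothesis $g_{1,bir}=g_{2,bir}$ to the statement that the underlying continuous maps $g_{i,Y}$ and $g_{i,X}$ coincide on points. For this I would globalize Lemma \ref{sigtau2} to pairs of schemes: the natural maps $\sigma:Y\to\X$ and $\tau:\X\to X$ are defined in the affine case, and by gluing along any affine covering one obtains, for every morphism of pairs of schemes $g:(Y,X)\to(Y',X')$, a commutative diagram
\begin{displaymath}
\xymatrix{
Y \ar[r]^{g_Y} \ar[d]_{\sigma} & Y' \ar[d]^{\sigma'} \\
\X \ar[r]^{g_{bir}} \ar[d]_{\tau} & \X' \ar[d]^{\tau'} \\
X \ar[r]^{g_X} & X'.
}
\end{displaymath}
Applying this to $g_1$ and $g_2$ together with $g_{1,bir}=g_{2,bir}$: the top square and the injectivity of $\sigma'$ from Proposition \ref{sigtau}.(\ref{sigtau sigma}) force $g_{1,Y}=g_{2,Y}$ on points, while the bottom square and the surjectivity of $\tau$ from Proposition \ref{sigtau} force $g_{1,X}=g_{2,X}$ on points.

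Once the continuous maps coincide, I would choose an affine covering $\{(V'_j,U'_j)=(\operatorname{Spec} B'_j,\operatorname{Spec} A'_j)\}$ of $(Y',X')$, set $U_j:=g_{i,X}^{-1}(U'_j)$ (independent of $i$), and refine it to an affine covering $\{(V_{j,k},U_{j,k})=(\operatorname{Spec} B_{j,k},\operatorname{Spec} A_{j,k})\}$ of the corresponding open sub-pair of $(Y,X)$. Each restriction $g_i|_{(V_{j,k},U_{j,k})}:(V_{j,k},U_{j,k})\to(V'_j,U'_j)$ is a morphism of affine pairs of schemes corresponding to a homomorphism of pairs of rings $\varphi_i:(B'_j,A'_j)\to(B_{j,k},A_{j,k})$. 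The normalization hypotheses $X=Nor_YX$ and $X'=Nor_{Y'}X'$ pass to these local rings via the identity $S^{-1}Nor_BA=Nor_{S^{-1}B}(S^{-1}A)$ invoked in Lemma \ref{bu+nor}, so that $A_{j,k}=Nor_{B_{j,k}}A_{j,k}$ and $A'_j=Nor_{B'_j}A'_j$. On the affinoid open $Val(B_{j,k},A_{j,k})\subset\X$ the restriction of $g_{i,bir}$ is precisely $\varphi_{i,bir}$ together with the restricted sheaf maps, so $g_{1,bir}=g_{2,bir}$ gives $\varphi_{1,bir}=\varphi_{2,bir}$. The faithfulness statement in Theorem \ref{important theorem}, which applies because of the integral-closure hypotheses just verified, then yields $\varphi_1=\varphi_2$; hence $g_1$ and $g_2$ agree on the affine cover $\{(V_{j,k},U_{j,k})\}$ of $(Y,X)$, and gluing gives $g_1=g_2$.

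The delicate point will be the first step: globalizing Lemma \ref{sigtau2} and carefully identifying the restriction of $g_{i,bir}$ on each affinoid piece with $\varphi_{i,bir}$, including the sheaf map data and not only the underlying continuous map. Both identifications amount to routine but careful gluing of the affine material developed in Section 2; once they are in place, the affinoid anti-equivalence of Theorem \ref{important theorem} does the real work.
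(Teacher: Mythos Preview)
Your proposal is correct and follows essentially the same route as the paper: use the globalized $\sigma/\tau$ diagram together with Proposition \ref{sigtau} to match the underlying continuous maps, then pull back an affine covering of $(Y',X')$, refine to an affine covering of $(Y,X)$, and apply the faithfulness part of Theorem \ref{important theorem} on each piece. The paper's write-up is terser but the logical structure is identical; your only extra step is the explicit check that the normalization hypotheses localize to the affine charts, which the paper leaves implicit.
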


\begin{proof}
Denote $g_{1,bir}=g_{2,bir}=h$.

We have a commutative diagram
\begin{displaymath} 
  \xymatrix { 
  Y \ar[d]^{\sigma} \ar@<0.5ex>[r]^{g_{1,Y}} \ar@<-0.5ex>[r]_{g_{2,Y}} & Y' \ar[d]^{\sigma}    \\
  {\X} \ar[d]^{\tau} \ar[r]^h  & {\X'}\ar[d]^{\tau} \\
  X \ar@<0.5ex>[r]^{g_{1,X}} \ar@<-0.5ex>[r]_{g_{2,X}} & X' .   
         }
\end{displaymath}
It follows from Proposition \ref{sigtau} and Lemma \ref{sigtau2} that $g_1$ and $g_2$ agree on the underlying topological spaces (both $Y$ and $X$).
Denote the topological part of  $g_1$ and $g_2$ by $g=(g_Y,g_X)$.
Furthermore, as in the faithfulness part of Theorem \ref{important theorem},  $g_{1,Y}$ and $g_{2,Y}$ agree as morphisms of schemes.

Let  $\{ (Y'_i,X'_i) \}_{i \in I}$ be an open affine covering of $(Y',X')$. 
Denote the topological pull back of each $(Y'_i,X'_i)$ through $g$ by $(Y_i,X_i)$.
For every ${i \in I}$ there is an open affine covering $\{ (Y'_{ij},X'_{ij}) \}_{j \in J_i}$ of $(Y'_i,X'_i)$.
It is enough to show that the restrictions $g_{1,X}|_{X_{ij}}$ and $g_{2,X}|_{X_{ij}}$ agree as morphisms of schemes $X_{ij} \to X'_i$ for each $j \in J_i$ and $i \in I$.
Hence we may assume that $(Y',X')$ and $(Y,X)$ are affine pairs.
This was already proved in the faithfulness part of Theorem \ref{important theorem}.

\end{proof}

\subsection{Fullness}

By combining \cite[Lemma 3.4.6]{tem} and our Remark \ref{tau2} we obtain

\begin{lemma}
Given a quasi-compact open subspace $\mathfrak{U} \subset \X=Val(Y,X)$, there exists a relative blow up $(Y,X_{\E}) \to (Y,X)$ and an open subscheme $U$ of $X_{\E}$ such that $\mathfrak{U} = \tau^{-1}(U)=Val(f^{-1}_{\E}(U),U)$.
\end{lemma}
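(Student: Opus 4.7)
The plan is to reduce to the affine setting via a finite affine cover of $(Y,X)$, express $\mathfrak{U}$ locally as a finite union of rational domains, realize each rational domain as the $\tau$-preimage of an affine chart of a local relative blow up, and finally combine these local blow ups into a single one using Corollary \ref{BL 2.6}.

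First I would pick a finite affine covering $\{(Y_k,X_k)\}_{k=1}^N$ of $(Y,X)$, with $X_k=\operatorname{Spec} A_k$ and $Y_k=\operatorname{Spec} B_k$. The intersection $\mathfrak{U}_k:=\mathfrak{U}\cap Val(B_k,A_k)$ is quasi-compact and open, so by Proposition \ref{base} it can be written as a finite union of rational domains $\mathfrak{U}_k=\bigcup_j \X_k(T_{kj}/b_{kj})$, with $T_{kj}=\{a_{kj\ell}\}_\ell$ and $b_{kj},a_{kj1},\ldots,a_{kjm_{kj}}$ generating the unit ideal in $B_k$. For each such pair set $E_{kj}=A_k+A_kb_{kj}+\sum_\ell A_k a_{kj\ell}\subset B_k$, a finite $A_k$-submodule whose generators span the unit ideal, and form the relative blow up $(Y_k,X_{k,E_{kj}})\to (Y_k,X_k)$. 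Comparing the $b_{kj}$-chart $V_{kj}\subset X_{k,E_{kj}}$ (computed as in the proof of Theorem \ref{sheaves}) with the formula $\X_k(T_{kj}/b_{kj})=Val((B_k)_{b_{kj}},A_k[\{a_{kj\ell}/b_{kj}\}_\ell])$, and using Remark \ref{tau2}, one gets $\tau_{kj}^{-1}(V_{kj})=\X_k(T_{kj}/b_{kj})$, where $\tau_{kj}\colon Val(B_k,A_k)\to X_{k,E_{kj}}$ is the canonical map.

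Next I would apply Corollary \ref{BL 2.6} to extend every local blow up $(Y_k,X_{k,E_{kj}})\to (Y_k,X_k)$ to a global relative blow up of $(Y,X)$, and then to produce a single relative blow up $g_\E\colon (Y,X_\E)\to (Y,X)$ which factors through all of them, inducing morphisms $\pi_{kj}\colon g_\E^{-1}(X_k)\to X_{k,E_{kj}}$. Set $U=\bigcup_{k,j}\pi_{kj}^{-1}(V_{kj})\subset X_\E$. By Lemma \ref{bu-bir} we have $(Y,X_\E)_{bir}=\X$, and by functoriality of $\tau$ (Lemma \ref{sigtau2}) we obtain $\tau^{-1}(\pi_{kj}^{-1}(V_{kj}))=\tau_{kj}^{-1}(V_{kj})=\X_k(T_{kj}/b_{kj})$, so that $\tau^{-1}(U)=\mathfrak{U}$. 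The further identification $\tau^{-1}(U)=Val(f_\E^{-1}(U),U)$ then follows by applying Remark \ref{tau2} on each affine chart of $U$ and gluing.

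The main obstacle is the identification carried out in the second paragraph, namely matching the $\tau$-preimage of the affine chart $V_{kj}$ with the rational domain $\X_k(T_{kj}/b_{kj})$. The $b_{kj}$-chart ring is, up to normalization, $A_k[\{a_{kj\ell}/b_{kj}\}_\ell,1/b_{kj}]$, whereas the rational domain is described using $A_k[\{a_{kj\ell}/b_{kj}\}_\ell]$; the extra element $1/b_{kj}$ is harmless thanks to the Observation preceding Theorem \ref{trasit}, which forces $v(b_{kj})\ge 1$ for every valuation $v$ in the rational domain, so that by Theorem \ref{important theorem} the two subrings yield the same affinoid birational space. Once this calculation is in place, the remainder of the argument is essentially formal.
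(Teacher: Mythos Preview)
Your argument is correct. The paper's own ``proof'' of this lemma is a one-line citation: it simply combines \cite[Lemma~3.4.6]{tem} with Remark~\ref{tau2}. What you have written is, in effect, a self-contained reconstruction of the content of that external lemma using the machinery developed in the present paper (rational domains, the explicit affine charts of a relative blow up as in the proof of Theorem~\ref{sheaves}, the Observation preceding Theorem~\ref{trasit}, and Corollary~\ref{BL 2.6}). So your approach is not genuinely different, just more explicit: the paper outsources the construction to Temkin's article, while you carry it out by hand. The one place where your write-up does real work beyond what the paper states is the reconciliation of the chart ring $A_k[\{a_{kj\ell}/b_{kj}\}_\ell,1/b_{kj}]$ with the rational-domain ring $A_k[\{a_{kj\ell}/b_{kj}\}_\ell]$; your use of the Observation to absorb $1/b_{kj}$ is exactly right, though invoking Theorem~\ref{important theorem} for this is heavier than necessary --- the two $Val$ spaces already agree as subsets of $Val(B_k,A_k)$ by direct inspection of the defining inequalities.
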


Taken together with Corollary \ref{BL 2.6}, we immediately obtain

\begin{cor} \label{BL 4.4}
Let $(Y,X)$ be a pair of schemes and let $\Omega$ be a finite family of quasi-compact open subspaces of the associated birational space $(Y,X)_{bir}$.
Then there is relative blow up $(Y,X_{\E}) \to (Y,X)$ together with a family $\Omega'$ of open subschemes of $X_{\E}$ such that the associated family $\Omega'_{bir}$ coincides with $\Omega$.
Furthermore if $\Omega$ covers $(Y,X)_{bir}$, the family $\Omega'$ covers $X_{\E}$.
\end{cor}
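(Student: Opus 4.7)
The plan is to reduce the finite family situation to the single-open-set situation handled by the preceding Lemma, and then use Corollary \ref{BL 2.6} to combine the resulting relative blow-ups into a single dominating one.

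First, for each $\mathfrak{U}_i \in \Omega$ (finitely many), I would apply the preceding Lemma to obtain a relative blow-up $g_i: (Y, X_{\E_i}) \to (Y, X)$ and an open subscheme $U_i \subset X_{\E_i}$ such that $\tau_i^{-1}(U_i) = \mathfrak{U}_i$, where $\tau_i: \X \to X_{\E_i}$ is the canonical map. Note that $(Y, X_{\E_i})_{bir} = \X$ by Lemma \ref{bu-bir}, so this makes sense. Next, by Corollary \ref{BL 2.6}(2), there exists a single relative blow-up $g: (Y, X_{\E}) \to (Y, X)$ which factors through each $g_i$, giving morphisms $\pi_i: X_{\E} \to X_{\E_i}$ with $g_i \circ \pi_i = g$.

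Set $U'_i := \pi_i^{-1}(U_i) \subset X_{\E}$, an open subscheme, and let $\Omega' = \{U'_i\}$. To see that $\Omega'_{bir} = \Omega$, I would verify the compatibility $\pi_i \circ \tau = \tau_i$ of the canonical maps to the respective scheme models (this is functoriality of $\tau$ under morphisms of pairs of schemes, immediate from the characterization of $\tau$ at the point $v = (\p, R_v, \Phi)$ as the image of the closed point of $\Spec S_v$; see Remark \ref{tau}). Then
\[
\tau^{-1}(U'_i) = \tau^{-1}(\pi_i^{-1}(U_i)) = \tau_i^{-1}(U_i) = \mathfrak{U}_i,
\]
establishing the first assertion.

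For the covering statement, suppose $\bigcup_i \mathfrak{U}_i = \X$. Applying $\tau$ gives $\bigcup_i U'_i \supseteq \tau(\X)$, so it suffices to know that $\tau: \X \to X_{\E}$ is surjective. This is the global form of Proposition \ref{sigtau}(3), and it reduces to the affine case: cover $X_{\E}$ by affine opens $\Spec A_\alpha$ with preimages $\Spec B_\alpha \subset Y$; each restriction $\tau|_{Val(B_\alpha, A_\alpha)} \to \Spec A_\alpha$ is surjective by Proposition \ref{sigtau}(3), and these restrictions cover the target.

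The main content of the argument is really packaged inside the preceding Lemma and Corollary \ref{BL 2.6}; the only genuine task here is the bookkeeping that the domination morphisms $\pi_i$ intertwine the $\tau$-maps correctly, which is straightforward from the universal behavior of $\tau$ under morphisms of pairs.
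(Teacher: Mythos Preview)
Your proposal is correct and follows exactly the approach the paper intends: the paper simply states that the corollary is obtained by combining the preceding Lemma with Corollary \ref{BL 2.6}, and you have spelled out precisely that argument, including the compatibility of the $\tau$-maps and the surjectivity of $\tau$ needed for the covering assertion.
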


We are now ready to prove fullness.

\begin{theorem}[\emph{bir} is Full] \label{Full}
Let $(Y,X)$ and $(Y',X')$ be pairs of schemes and let $h: (Y',X')_{bir} \to (Y,X)_{bir}$ be a morphism of birational spaces.
Then there exist a relative blow up $(Y',X'_{\E}) \to (Y',X')$ and a morphism of pairs $k: (Y',Nor_{Y'}X'_{\E}) \to (Y,X)$ such that $k_{bir} = h \circ g_{bir}$, where $g$ is the morphism $ (Y',Nor_{Y'}X'_{\E}) \to (Y',X'_{\E}) \to (Y',X') $.\\
In particular $h$ is isomorphic to $k_{bir}$.
\end{theorem}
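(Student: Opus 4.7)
The plan is to construct $k$ by pulling back a finite affine cover of $(Y,X)$ through $h$, realizing the resulting cover of $(Y',X')_{bir}$ by a single relative blow up via Corollary \ref{BL 4.4}, applying the affinoid anti-equivalence of Theorem \ref{important theorem} piece by piece, and gluing via the faithfulness statement of Theorem \ref{Faithful}.

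First, I would choose a finite affine open cover $\{(V_\alpha, U_\alpha)\}_{\alpha \in A}$ of the pair $(Y, X)$. The affinoid birational spaces $(V_\alpha, U_\alpha)_{bir}$ cover $(Y,X)_{bir}$, so their preimages $\mathfrak{W}_\alpha := h^{-1}((V_\alpha, U_\alpha)_{bir})$ form a finite open cover of $(Y',X')_{bir}$ by quasi-compact subspaces. Corollary \ref{BL 4.4} then supplies a single relative blow up $(Y', X'_{\E}) \to (Y', X')$ and an open cover $\{W_\alpha\}$ of $X'_{\E}$ with $(f^{-1}_{\E}(W_\alpha), W_\alpha)_{bir} = \mathfrak{W}_\alpha$. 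Refining each $W_\alpha$ by a finite affine open cover $\{W_{\alpha,\beta}\}_\beta$ of $X'_{\E}$ and setting $V'_{\alpha,\beta} := f^{-1}_{\E}(W_{\alpha,\beta})$, each $(V'_{\alpha,\beta}, W_{\alpha,\beta})_{bir}$ is affinoid, and $h$ restricts to a morphism of affinoid birational spaces into $(V_\alpha, U_\alpha)_{bir}$. Theorem \ref{important theorem}, combined with Lemma \ref{nor-bir} to identify $(V'_{\alpha,\beta}, W_{\alpha,\beta})_{bir}$ with $(V'_{\alpha,\beta}, Nor_{V'_{\alpha,\beta}} W_{\alpha,\beta})_{bir}$, produces a uniquely determined morphism of pairs of schemes
$$ k_{\alpha,\beta}: (V'_{\alpha,\beta}, Nor_{V'_{\alpha,\beta}} W_{\alpha,\beta}) \to (V_\alpha, U_\alpha) $$
whose image under $bir$ is the restriction of $h$ to the corresponding piece.

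The remaining task is gluing. Two pieces $(V'_{\alpha,\beta}, W_{\alpha,\beta})$ and $(V'_{\alpha',\beta'}, W_{\alpha',\beta'})$ typically land in different affines $U_\alpha$ and $U_{\alpha'}$ of $X$, so I would compose $k_{\alpha,\beta}$ and $k_{\alpha',\beta'}$ with the open immersions $(V_\alpha, U_\alpha), (V_{\alpha'}, U_{\alpha'}) \hookrightarrow (Y,X)$ and restrict to their common domain. A further refinement of this overlap by affine sub-pairs of $(Y,X)$ reduces the comparison to the affinoid setting, where both restrictions induce the same morphism of birational spaces (namely the restriction of $h \circ g_{bir}$); Theorem \ref{Faithful} then forces the two restrictions to coincide as morphisms of pairs of schemes. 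The $k_{\alpha,\beta}$ therefore glue to a morphism $k: (Y', Nor_{Y'} X'_{\E}) \to (Y, X)$, and since $k_{bir}$ agrees with $h \circ g_{bir}$ on the affinoid cover of $(Y',X')_{bir}$ by construction, the two coincide everywhere.

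The main obstacle is precisely this cross-chart gluing in the last step: the locally constructed morphisms of schemes land in different pieces of $(Y,X)$, and one has to check that they agree on overlaps once embedded into $(Y,X)$. Theorem \ref{Faithful} handles this, while the compatibility between blow ups and normalizations provided by Lemma \ref{bu+nor} ensures that $Nor_{Y'} X'_{\E}$ is built consistently from the local pieces $Nor_{V'_{\alpha,\beta}} W_{\alpha,\beta}$ so that the gluing makes sense scheme-theoretically.
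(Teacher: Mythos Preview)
Your proposal is correct and follows essentially the same route as the paper: pull back a finite affine cover of $(Y,X)$ through $h$, invoke Corollary \ref{BL 4.4} to realize the resulting cover by a single relative blow up of $(Y',X')$, refine to affines, apply the affinoid anti-equivalence (Theorem \ref{important theorem}) on each piece, and glue via Theorem \ref{Faithful}. The only cosmetic difference is that the paper phrases the local step as an application of the ``affine case'' (which yields identity blow ups) and then formally combines these trivial blow ups via Corollary \ref{BL 2.6}, whereas you skip that redundant step and pass directly to the normalized affine pieces.
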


\begin{proof}
As \emph{bir} factors through the localized category, by Lemma \ref{bu+nor} we may replace $(Y',X')$ with $(Y',Nor_{Y'}X')$.
So the morphism $g$ of the statement is just the relative blow up $(Y',X'_{\E}) \to (Y',X')$.

Consider the affine case.
Let $(B,A)$ and $(B',A')$ be two pairs of rings.
By Theorem \ref{important theorem} the morphism between the associated biratonal spaces $h :(B',A')_{bir} \to (B,A)_{bir}$ is given by a morphism of the pairs of rings $(B,A) \to (B',Nor_{B'}A')=(B',A')$.
We see that the required relative blow up is just the identity.

For the general case, denote $\X=(Y,X)_{bir}$ and $\X'=(Y',X')_{bir}$.
An affine covering $ \{(Y_i,X_i) \}$ of $(Y,X)$ gives an affinoid covering $\{ \X_i \}$  of $\X$.
Each preimage $h^{-1}(\X_i)$ can also be covered by finitely many open affinoid biratonal subspaces.
Hence by Corollary \ref{BL 4.4}, refining the coverings in a suitable way and replacing $(Y',X')$ with a suitable relative blow up, we may assume that we have coverings $\{ \X_i \}$ of $\X$ and $\{ \X'_i \}$ of $\X'$ consisting of finitely many open affinoid biratonal subspaces such that $h(\X'_i) \subset \X_i$ for all $i$ and both are represented by affine open coverings $ \{(Y_i,X_i) \}$ of $(Y,X)$ and $\{ (Y'_i,X'_i) \}$ of $(Y',X')$.

By the affine case we obtain for every $i$ a relative blow-up $g_i:(Y'_i,{X'_i}_{\E_i}) \to (Y'_i,X'_i)$ and morphism $k_i:(Y'_i,{X'_i}_{\E_i}) \to (Y_i,X_i)$ satisfying $k_{i,bir} = h|_{\X'_i} \circ g_{i,bir}$.
By Corollary \ref{BL 2.6}, there is some relative blow-up $g:(Y',X'_{\E}) \to (Y',X')$ such that all the pairs $(Y'_i,{X'_i}_{\E_i})$ are open sub-pairs of $(Y',X'_{\E})$ and form a covering.
It follows from Theorem \ref{Faithful} that we can glue the $k_i$ and get a morphism $k: (Y',X'_{\E}) \to (Y,X)$  such that $k_{bir} = h \circ g_{bir}$.
\end{proof}

\begin{cor} \label{BL 4.1(d)}
Let the pairs of schemes $(Y,X)$ and $(Y',X')$ both be scheme models for the same birational space $\X$.
Then there is another scheme model $(Y'',X'')$ of $\X$ that dominates both via relative blow ups and perhaps a relative normalization.
\end{cor}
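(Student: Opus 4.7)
The plan is to deduce this directly from the fullness theorem (Theorem \ref{Full}) applied to the identity morphism of $\X$. Since both $(Y,X)$ and $(Y',X')$ are scheme models of $\X$, we have canonical identifications $(Y,X)_{bir} = \X = (Y',X')_{bir}$, and the identity $\mathrm{id}_{\X}$ is a morphism of birational spaces from $(Y',X')_{bir}$ to $(Y,X)_{bir}$.

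First I would apply Theorem \ref{Full} to $h = \mathrm{id}_{\X}$. This furnishes a relative blow up $g : (Y', X'_{\E}) \to (Y', X')$ together with a morphism of pairs of schemes
\begin{displaymath}
k : (Y', Nor_{Y'}(X'_{\E})) \to (Y, X)
\end{displaymath}
such that $k_{bir} = h \circ g_{bir} = g_{bir}$, where now $g$ denotes the composite $(Y', Nor_{Y'}(X'_{\E})) \to (Y', X'_{\E}) \to (Y', X')$ (relative normalization followed by the relative blow up).

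Next, I would set $(Y'', X'') := (Y', Nor_{Y'}(X'_{\E}))$ and verify the three required properties. For the first, $(Y'', X'')$ is a scheme model of $\X$: by Lemma \ref{bu-bir} the relative blow up induces an isomorphism $(Y', X'_{\E})_{bir} \cong (Y', X')_{bir} = \X$, and by the remark following Theorem \ref{Faithful} together with Lemma \ref{nor-bir}, the relative normalization also induces an isomorphism on birational spaces. Hence $(Y'', X'')_{bir} = \X$ and $g_{bir}$ is (identified with) the identity of $\X$, so $(Y'', X'')$ dominates $(Y', X')$ via $g$, which is precisely a composition of a relative blow up and a relative normalization. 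For the second, $(Y'', X'')$ dominates $(Y, X)$ via $k$, since $k_{bir} = g_{bir} = \mathrm{id}_{\X}$.

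There is essentially no obstacle here once Theorem \ref{Full} is in hand; the whole point of fullness is to allow us to realize an abstract morphism of birational spaces by an actual morphism of pairs of schemes, modulo relative blow ups and normalizations. The only thing one must be careful about is bookkeeping the identifications $(Y'', X'')_{bir} \cong (Y', X')_{bir} \cong \X$ so that the two induced maps $g_{bir}$ and $k_{bir}$ are literally the identity of $\X$, rather than merely isomorphisms. This is guaranteed by the equalities in Lemmas \ref{bu-bir} and \ref{nor-bir}, which give equalities of birational spaces rather than mere isomorphisms.
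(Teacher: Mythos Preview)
Your argument has a genuine gap. The corollary asserts that the common model dominates both $(Y,X)$ and $(Y',X')$ \emph{via relative blow ups and perhaps a relative normalization}. Your $(Y'',X'') = (Y', Nor_{Y'}(X'_{\E}))$ does dominate $(Y',X')$ in this way, since $g$ is by construction a relative blow up followed by a relative normalization. But the morphism $k:(Y'',X'') \to (Y,X)$ produced by Theorem~\ref{Full} is merely \emph{some} morphism of pairs with $k_{bir}=\mathrm{id}_{\X}$; nothing in Theorem~\ref{Full} tells you that $k$ is itself a relative blow up or a normalization. So you have only established domination of $(Y,X)$ in the weak sense of the definition, not domination ``via relative blow ups.'' This distinction is not cosmetic: the sole use of this corollary (in the essential surjectivity proof) is to feed the resulting domination maps into Corollary~\ref{BL 2.6}, which extends \emph{relative blow ups} from open sub-pairs to the ambient pair. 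An arbitrary $k$ cannot be extended in this way.

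The paper closes this gap by applying Theorem~\ref{Full} a second time, now to $k_{bir}^{-1}$, to produce a relative blow up $j:(Y,X_{\F}) \to (Y,X)$ and a morphism $q:(Y,X_{\F}) \to (Y',X'_{\E})$ with $q_{bir}=k_{bir}^{-1}\circ j_{bir}$. One then argues that $k\circ q = j$ (using faithfulness), and since $j$ is a relative blow up one deduces that $q$ is as well; hence $g\circ q:(Y,X_{\F})\to(Y',X')$ is a composition of relative blow ups. Thus $(Y,X_{\F})$ dominates both models via blow ups, as required. Your single application of fullness gets you halfway; you need the second application to force the remaining morphism to be of the right shape.
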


\begin{proof}
Again we replace $(Y,X)$ and $(Y',X')$ with $(Y,Nor_{Y}X)$ and $(Y',Nor_{Y'}X')$ respectively.

We have an isomorphism $h:(Y',X')_{bir} \stackrel{\sim}{\to} (Y,X)_{bir}$.
As $Y$ is embedded in the subset of $\X=(Y,X)_{bir}$ of points $v$ such that $\M_{\X,v}=\OO_{\X,v}$, its scheme structure is determined by $(\X,\M_{\X})$.
The same is true for $Y'$, so $h$ induces an isomorphism $Y \simeq Y'$.
We assume that $Y'=Y$.

Applying Theorem \ref{Full} we obtain a relative blow up $g:(Y,X'_{\E}) \to (Y',X')$ and a morphism of pairs $k: (Y',X'_{\E}) \to (Y,X)$  such that $k_{bir} = h \circ g_{bir}$, in particular  $k_{bir}: (Y',X'_{\E})_{bir} \to (Y,X)_{bir}$ is also an isomorphism.
Using Theorem \ref{Full} again for $k_{bir}^{-1}$ we obtain a relative blow up $j:(Y,X_{\F}) \to (Y,X)$ and a morphism of pairs $q:(Y,X_{\F}) \to (Y',X'_{\E})$ such that $q_{bir} = k_{bir}^{-1} \circ j_{bir}$.

\begin{displaymath}
\xymatrix{
(Y,X'_{\E}) \ar[d]^g \ar[dr]^k & (Y,X_{\F}) \ar[d]^j \ar[l]_q  \\
(Y',X') &  (Y,X) 
}
\end{displaymath}
As $j=k \circ q:(Y,X_{\F}) \to (Y,X)$ is a relative blow up, so is $q:(Y,X_{\F}) \to (Y',X'_{\E})$.
Thus the composition of relative blow ups $g \circ q:(Y,X_{\F}) \to (Y',X')$ is also a relative blow up.
So $(Y,X_{\F})$ is the required scheme model.

\end{proof}

\subsection{Essential Surjectivety}

\begin{theorem}[\emph{bir} is Essentially Surjective]
Every quasi-compact and quasi-separated birational space $\X$ has a scheme model.
\end{theorem}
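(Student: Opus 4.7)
The plan is to construct a scheme model by gluing together affine scheme models along the overlaps of a finite affinoid cover, using relative blow ups to reconcile the local models.

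First, I would use the quasi-compactness of $\X$ to choose a finite cover $\{\X_i\}_{i=1}^n$ by affinoid birational subspaces, with $\X_i = Val(B_i,A_i)$, so that the affine pairs $(Y_i,X_i) = (\mathrm{Spec}\,B_i,\mathrm{Spec}\,A_i)$ are scheme models of the $\X_i$. Quasi-separatedness of $\X$ then makes every intersection $\X_i\cap\X_j$ quasi-compact, so by the Lemma preceding Corollary \ref{BL 4.4} (together with Remark \ref{tau2}), each $\X_i\cap\X_j$ is realized, inside $\X_i$, as $\tau^{-1}$ of an open subscheme of some relative blow up of $(Y_i,X_i)$. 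Applying Corollary \ref{BL 2.6} to combine, for each fixed $i$, the finitely many relative blow ups needed for the various $j$'s, I would replace each $(Y_i,X_i)$ by a single relative blow up so that every $\X_i\cap\X_j$ is realized as the birational space of an open affine sub-pair $(Y_{ij},X_{ij})\subset(Y_i,X_i)$.

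Next, for each pair $(i,j)$ the two models $(Y_{ij},X_{ij})$ and $(Y_{ji},X_{ji})$ give scheme models of the same birational space $\X_i\cap\X_j$. By Corollary \ref{BL 4.1(d)} there exists a common scheme model dominating both by relative blow ups and, if necessary, a relative normalization. Using Corollary \ref{BL 2.6} once more to extend the required relative blow up of $(Y_{ij},X_{ij})$ to all of $(Y_i,X_i)$ (and likewise for $j$), and noting that by Lemma \ref{bu-bir} and Lemma \ref{nor-bir} these operations do not change $\X_i$, I may iterate this procedure across all pairs to arrive at models in which, for every $(i,j)$, the sub-pairs representing $\X_i\cap\X_j$ in $(Y_i,X_i)$ and in $(Y_j,X_j)$ are identified by an isomorphism $\varphi_{ij}$ of pairs of schemes that induces the identity on $\X_i\cap\X_j$.

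To glue, I must check the cocycle condition $\varphi_{jk}\circ\varphi_{ij}=\varphi_{ik}$ on the triple overlaps $\X_i\cap\X_j\cap\X_k$. Both sides, after applying $bir$, equal the identity on the birational space corresponding to $\X_i\cap\X_j\cap\X_k$; hence the faithfulness Theorem \ref{Faithful} forces them to coincide as morphisms of pairs of schemes (passing to the normalizations where needed so that the hypothesis $X = \mathrm{Nor}_Y X$ holds). With cocycles verified, the $(Y_i,X_i)$ glue to a pair $(Y,X)$ consisting of quasi-compact quasi-separated schemes with an affine schematically dominant morphism, and by construction together with the gluing description of birational spaces from Section 3 we obtain $(Y,X)_{bir}\simeq\X$.

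The main obstacle is Step 2--3: organizing the iterated relative blow ups so that pairwise identifications can be achieved simultaneously, and then verifying the triple-overlap cocycle condition. The subtle point is that blowing up $(Y_i,X_i)$ to accommodate the comparison with $(Y_j,X_j)$ may in principle disturb the already-arranged comparisons with $(Y_k,X_k)$; the resolution is that by Lemma \ref{bu-bir} each blow up leaves the birational space unchanged, so we may first collect all the required blow ups indexed by pairs and then apply Corollary \ref{BL 2.6} once to produce a single common relative blow up of each $(Y_i,X_i)$ before doing the matching, after which faithfulness does the rest.
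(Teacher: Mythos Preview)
Your overall strategy is sound and uses the same ingredients as the paper (Corollaries \ref{BL 4.4}, \ref{BL 2.6}, \ref{BL 4.1(d)}, and faithfulness), but the paper organises the argument differently: it proceeds by induction on the size of the cover, reducing to the case of \emph{two} open pieces $\mathfrak U_1,\mathfrak U_2$ with scheme models. In the two-piece case there is only a single overlap, so after one application of Corollary \ref{BL 4.1(d)} and one extension via Corollary \ref{BL 2.6} the two models glue directly; no cocycle condition ever arises, and no iteration of blow ups is needed.

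Your all-at-once approach runs into exactly the difficulty you flag, and the resolution you propose does not quite close the gap. Suppose you carry out your plan: for each pair $(i,j)$ Corollary \ref{BL 4.1(d)} produces a common model $(T_{ij},W_{ij})$ dominating $(Y_{ij},X_{ij})$ and $(Y_{ji},X_{ji})$; you then extend the corresponding blow ups to $(Y_i,X_i)$ and $(Y_j,X_j)$ and, for each fixed $i$, combine all the extensions into a single blow up $(Y_i,X'_i)\to(Y_i,X_i)$ via Corollary \ref{BL 2.6}. The open sub-pair of $(Y_i,X'_i)$ lying over $\X_i\cap\X_j$ is now some \emph{further} blow up of $(T_{ij},W_{ij})$, coming from the restrictions of the extensions attached to the other indices $k\neq j$; likewise on the $j$ side one obtains a further blow up of $(T_{ij},W_{ij})$ coming from the indices $\ell\neq i$. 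There is no reason these two further blow ups of $(T_{ij},W_{ij})$ coincide, so the isomorphisms $\varphi_{ij}$ need not exist after your ``single common relative blow up'' step. One could try to repeat Corollary \ref{BL 4.1(d)} on these new sub-pairs, but then the same problem recurs and you have not shown the process terminates. The paper's inductive reduction to two pieces is precisely the device that avoids this infinite regress: after gluing $\mathfrak U_1$ and $\mathfrak U_2$ into a single scheme model of $\mathfrak U_1\cup\mathfrak U_2$, you are back in the situation of a cover with one fewer piece, and you never have to synchronise more than one overlap at a time.
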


\begin{proof}
Consider a quasi-compact and quasi-separated  birational space $\X$.
We want to show that there is a pair of schemes $(Y,X)$ satisfying $(Y,X)_{bir} \simeq \X$.
We proceed by induction on the number of open birational spaces which cover $\X$ and have scheme models.
As $\X$ is quasi-compact, it is enough to consider only the case of an affinoid covering consisting of two subspces.

Assume that $\X$ is covered by two quasi-compact open subspaces $\mathfrak{U}_1$ and $\mathfrak{U}_2$, which admit scheme models $(V_1,U_1)$ and $(V_2,U_2)$.
Set $\mathfrak{W} =\mathfrak{U}_1\cap \mathfrak{U}_2$.
Since $\X$ is quasi-separated, an application of Corollary \ref{BL 4.4} shows that, after blowing-up, we may assume that the open immersions $\mathfrak{W}  \subset \mathfrak{U}_1$ and $\mathfrak{W}  \subset \mathfrak{U}_2$ are represented by open immersions of sub-pairs $(T',W') \subset (V_1,U_1)$ and $(T'',W'')  \subset (V_2,U_2)$.
Now, using Corollary \ref{BL 4.1(d)}, we can dominate the scheme models $(T',W')$ and $(T'',W'')$ by a third scheme model $(T,W)$ of $\mathfrak{W}$.
Using Corollary \ref{BL 2.6} we extend the corresponding blow-ups to $(V_1,U_1)$ and $(V_2,U_2)$, so we may view $(T'',W'')$ as an open sub-pair of $(V_1,U_1)$ and $(V_2,U_2)$. Gluing both along W yields the required scheme model $(Y,X)$ of $\X$.

\end{proof}

\vspace{10mm}
\bibliography{mybib}{}
\bibliographystyle{amsalpha}

\end{document}